\documentclass[11pt,a4paper]{article}

\usepackage{amsmath}
\usepackage[T1]{fontenc}
\usepackage[utf8]{inputenc}
\usepackage{amsfonts}
\usepackage{amsthm}
\usepackage{amstext}
\usepackage{amssymb}
\usepackage{color}
\usepackage{bbold}
\usepackage{hyperref}
\usepackage{url}

\newcommand{\OW}{\mathrm{OW}}
\newcommand{\OT}{\mathrm{OT}}
\newcommand{\SODECO}{\mathrm{SODECO}}
\newcommand{\ODECO}{\mathrm{ODECO}}

\newcommand{\cc}{\mathbb{C}}
\newcommand{\rr}{\mathbb{R}}

\newcommand{\diag}{\operatorname{diag}}
\newcommand{\rk}{\operatorname{rank}}

\newtheorem{fact}{Fact}
\newtheorem{theorem}{Theorem}
\newtheorem{example}[theorem]{Example}

\newtheorem{lemma}[theorem]{Lemma}
\newtheorem*{lemma*}{Lemma}
\newtheorem{proposition}[theorem]{Proposition}
\newtheorem{corollary}[theorem]{Corollary}
\newtheorem{remark}[theorem]{Remark}

\newtheorem*{open*}{Open~question}
\newtheorem{definition}[theorem]{Definition}


\sloppy

\begin{document}

\title{Orthogonal tensor decomposition and orbit closures
  from a linear algebraic perspective}

\author{Pascal Koiran\footnote{Univ Lyon, EnsL, UCBL, CNRS,  LIP, F-69342, LYON Cedex 07, France. Email: {\tt pascal.koiran@ens-lyon.fr}. This work got started at the Simons Institute for the Theory of Computing during the Fall 2018 program on Lower Bounds in Computational Complexity.}}

\maketitle

\begin{abstract}
  We study orthogonal decompositions of symmetric and ordinary
  tensors using methods from linear algebra. For the field of real numbers
  we show that the sets of decomposable tensors can be defined by equations of degree 2. This gives  a new proof of some of the results of Robeva and Boralevi et al.
  Orthogonal decompositions over the field of complex numbers had not
  been studied previously; we give an explicit description of the set
  of decomposable tensors using polynomial equalities and inequalities,
  and we begin a study of their closures.

  The main open problem that arises from this work is to obtain a
  complete description of the closures.
  This question is akin to that of characterizing
  {\em border rank} of tensors
  in algebraic complexity.
  We give partial results using in particular a connection with
  approximate simultaneous diagonalization
  (the so-called {\em ASD property}).

  {\em Keywords:} tensor decomposition, tensor rank, orbit closure,
  Waring decomposition, border rank.
  \end{abstract}

\newpage

{\tableofcontents}

\newpage

\section{Introduction}

In this paper we study several types of {\em orthogonal tensor decompositions}
and give algebraic characterizations of the set of decomposable tensors.
That is, we give explicit systems of polynomial equations whose zero set
is the set of decomposable tensors. When that is not possible, i.e.,
when the set of decomposable tensors is not Zariski closed, we describe
it as a constructible set (using polynomial inequalities in addition
to polynomial equalities). In the non-closed case we begin a study
of the closure. The main goal here would be to obtain an explicit
description of the closure, and we give partial results in this direction.

The decompositions that we study can be defined in two equivalent
languages: the language of tensors, and the language of polynomials.
Indeed, as is well known one can associate to a symmetric tensor
(respectively, to an ordinary tensor) a homogeneous polynomial
(respectively, a multilinear polynomial) in the same way that
a quadratic form is associated to a symmetric matrix and a bilinear form
is associated to an arbitrary matrix.
We begin with a definition in the language of polynomials, but we will
switch between the two languages whenever that is convenient.

Let $K$ be a field of characteristic 0. We denote by $K[x_1,\ldots,x_n]_d$ the space of homogeneous polynomials of degree $d$ in $n$ variables (also called: ``degree $d$ forms''). The  cases of interest for this paper are $K=\rr$ and
$K=\cc$.
Recall that a matrix $A \in M_n(K)$ is said to be orthogonal if
$A^TA = \mathrm{Id}_n$.
\begin{definition} \label{waringdef}
We say that $f \in K[x_1,\ldots,x_n]_d$ admits an {\em orthogonal Waring decomposition}
if it can be written as $f(x)=g(Ax)$ where $A$ is an orthogonal matrix and $g$ is any polynomial
of the form 
\begin{equation} \label{powersum}
g(x_1,...,x_n)=\alpha_1x_1^d+\cdots+\alpha_nx_n^d
\end{equation}
with $\alpha_1,\ldots,\alpha_n \in K$.
\end{definition}
In this paper we focus on  
the case $d=3$ , which corresponds to symmetric tensors of order 3.
We will denote by $\OW_n(K)$ the set of homogeneous 
polynomials of degree $3$ in $n$ variables that admit such a decomposition,
and we will identify it with the corresponding set of symmetric tensors.

For $K=\rr$, Definition~\ref{waringdef} turns out to be
equivalent to the notion of
{\em symmetrically odeco} tensor studied in~\cite{boralevi17}.
According to~\cite{boralevi17}, a symmetric tensor is symmetrically odeco
if it can be written as
$$\sum_{i=1}^k 
\alpha_i v_i^{\otimes d}$$
where $\alpha_i = \pm 1$ and
$v_1,\ldots,v_k$ are nonzero, pairwise orthogonal vectors in~$\rr^n$. 

Let us now move to ordinary tensors. As recalled above, an ordinary tensor of order three
    $T \in K^{n \times n \times n}$ can be represented
by a trilinear form $t(x,y,z)=\sum_{i,j,k=1}^n T_{ijk}x_iy_jz_k$ where
$x,y,z$ denote three $n$-tuples of variables.
\begin{definition} \label{orthodef}
  We say that the trilinear form $t(x,y,z) \in K[x,y,z]$,
  or the corresponding tensor $T$,
admits an orthogonal decomposition if one can write $t(x,y,z)=g(Ax,By,Cz)$
where $A,B,C$ are orthogonal matrices and $g$ is a diagonal trilinear form in $3n$ variables,
i.e., a polynomial of the form:
\begin{equation} \label{diagtensor}
  g(x,y,z)=\sum_{i=1}^n \alpha_i x_i y_i z_i
\end{equation}
with $\alpha_1,\ldots,\alpha_n \in K$.
\end{definition}
We will denote by $\OT_n(K)$ the set of trilinear forms that admit an
orthogonal decomposition, and we will use the same notation for
the corresponding set of tensors.
For $K=\rr$, it turns out that Definition~\ref{orthodef} agrees with the definition
of an ``odeco tensor''
from~\cite{boralevi17}: an order 3 tensor is odeco
if it can be written as   $$\sum_{i=1}^k u_i \otimes v_i \otimes w_i$$
    where each of the the 3 lists $(u_1,\ldots,u_k)$, $(v_1,\ldots,v_k)$,
    $(w_1,\ldots,w_k)$ is made of $k$ nonzero, pairwise orthogonal vectors
    in $\rr^n$.

    \subsection{Results and methods}

    A tensor $T$ of order 3 and size $n$ can be viewed as $n$ matrices
    (the ``slices'' of $T$)  stacked on top of each other.
    This very down-to-earth
    point of view turns out to be remarkably powerful for the study
    of orthogonal decompositions because it allows us to leverage
    the known body of work on simultaneous reduction of matrices.
    For some of our results about the complex field we also need elements
    of the theory of quadratic forms (and in particular the notions
    of isotropic vectors, totally isotropic subspaces,
    Witt's extension theorem\ldots\footnote{As a side remark, isotropic
      subspaces also appear in the recent paper~\cite{bei19}
      where a connection is made
      to graph-theoretic concepts such as the chromatic number and the independence number.}).
    Our contributions are twofold:
    \begin{itemize}
    \item[(i)] We give more elementary proofs of two results
      from~\cite{boralevi17}: the set of odeco and symmetrically odeco tensors
      can be described by equations of degree 2. In~\cite{boralevi17}
      the result for symmetrically odeco tensors is obtained as follows:
      given a symmetric tensor $S$ of order 3, they define a bilinear map from
      $\rr^n \times \rr^n$ to $\rr^n$ associated to $S$ in a natural way;
      then they show that $S$ is symmetrically
      odeco iff this map 
      gives rise to an associative algebra.
      We obtain an alternative, intuitively appealing characterization:
      $S$ is symmetrically odeco iff its slices commute.
      We give two different proofs of this fact (see Section~\ref{realwaring}
      for details).
      
    \item[(ii)] We initiate a study of orthogonal decomposition over
      the field of complex numbers, which up to now had not been studied
      either from an algorithmic or structural point of view.
      In particular, we stress that all the results in~\cite{boralevi17}
      for the field of complex numbers are obtained for unitary rather than
      orthogonal decompositions.

      In this paper we give characterizations of the set of symmetric and
      ordinary tensors that admit orthogonal decompositions in the sense
      of Definitions~\ref{waringdef} and~\ref{orthodef}.
      In particular, for  orthogonal Waring decomposition over
$\cc$ the slices must commute and be diagonalizable.

      Compared to unitary or real orthogonal decompositions,
      a distinctive feature of complex orthogonal decompositions, and our
      main motivation for studying them, is that
      the set of decomposable tensors is not closed (as could be guessed
      from       the diagonalizability condition
      in the above characterization).
      We elaborate on this in Section~\ref{orbitclosure}.
    \end{itemize}

    A remark is in order regarding the above point (i).
    Jan Draisma (personal communication) has pointed out that the authors
    of~\cite{boralevi17} first worked in the language of slices before
    switching to the language of algebras, which turned out to be more
    efficient for their purposes. It may still be useful to present
    the first point of view in detail here since there does not remain any
    trace of it in~\cite{boralevi17}. More importantly, this
    point of view seems better suited to the study of complex orthogonal
    decompositions because, as explained in (ii),
    the sets of decomposable tensors are not closed.

    \subsection{Orbit closures} \label{orbitclosure}

    According to Definition~\ref{waringdef}, the set of decomposable
    tensors is the orbit of the set of polynomials of the form~(\ref{powersum})
    under the action of the orthogonal group. The closure of the
    set of decomposable tensors is therefore an {\em orbit closure}.
    We have a similar situation in Definition~\ref{orthodef} with the
    action of a product of 3 orthogonal groups.
    The notion of orbit closure plays a central in Mulmuley and Sohoni's
    Geometric Complexity Theory~\cite{mulmuley01,mulmuley08}.
    In their program, the goal is to show that the permanent polynomial\footnote{More precisely, a ``padded'' version of the permanent.}
    is not in the orbit closure of the determinant (under the action
    of the general linear group).
    
    Closer to the topic of the present paper, we find the notion of
    {\em border rank} of tensors which plays an important role in the
    the study of matrix multiplication algorithms~\cite{BCS}.
   If we replace in Definition~\ref{orthodef} the orthogonal group
    by the general linear group, the corresponding orbit closure
    is the set of tensors of border rank at most $n$.
    The same change in Definition~\ref{waringdef} would yield the set
    of polynomials of {\em border Waring rank} at most $n$.
    Restricting to the orthogonal group as we do in this paper yields
    orbit closures which should hopefully be easier to study.
    We study them ``from below'', i.e., we find nontrivial families of
    tensors which belong to the orbit closures;
    and we study them ``from above'', i.e., we find equations that must
    be satisfied by all tensors in the orbit closures. For instance,
    we show that the slices of any tensor in the closure  $\overline{\OW_n(\cc)}$ of $\OW_n(\cc)$
    must commute, that they generate a matrix algebra of dimension at
    most $n$, and that their centralizer is of dimension at least $n$.
    We obtain these equations thanks to a connection with the so-called
    ``ASD property''~\cite{omeara06,omeara11}:
    \begin{definition} \label{asd}
  A tuple $(A_1,\ldots,A_k)$ of matrices in $M_n(\cc)$
  is {\em approximately simultaneously diagonalizable} (ASD for short)
  if it is in the closure of the set of $k$-tuples
  of simultaneously diagonalizable matrices, i.e., if for any $\epsilon>0$
  there exist  simultaneously diagonalizable matrices $B_1,\ldots,B_k$
  which satisfy $||A_i - B_i|| < \epsilon$ for $i=1,\ldots,k$.
    \end{definition}
    Indeed, we show that the slices of a tensor in $\overline{\OW_n(\cc)}$
    must satisfy the ASD property.

\subsection{Algorithmic issues}

The characterizations that we obtain in our 4 main scenarios
(orthogonal Waring decomposition of symmetric tensors and orthogonal decomposition of ordinary tensors, over the real and complex fields)
are straightfoward to check algorithmically:
they involve only standard linear-algebraic computations on the slices
of the tensors (as a previously mentioned example, for orthogonal Waring decomposition
over
$\cc$ the slices must commute and be diagonalizable).
From these  characterizations one could also
derive algorithms that effectively
construct a decomposition whenever that is possible.
These algorithms would rely on standard routines from linear algebra
such as simultaneous matrix diagonalization.
We will not go into the details 
in the present paper
but we note that there is a large literature on algorithms for various
types of tensor decompositions
(see e.g.~\cite{batselier15,kolda15,salmi09,zhang01}),
and that they are often based on linear algebra.
In particular, the preprint~\cite{kolda15} makes the connection between
simultaneous diagonalization and orthogonal decomposition of real symmetric
tensors.
Therefore, one contribution of our paper is to show that techniques
from linear algebra are not only useful for designing decomposition
algorithms,
but also to obtain algebraic characterizations.\footnote{We note 
  that the connection between singular value decomposition
    and tensor decomposition
algorithms is made in~\cite{boralevi17} (see Proposition~7 from that paper, and the remarks
thereafter). However, the algebraic characterizations obtained later in~\cite{boralevi17} 
do not rely on the SVD or similar techniques.}

\subsection{Open problems}

The main open problem that arises from this work is the complete
determination of the orbit closures $\overline{\OW_n(\cc)}$
and $\overline{\OT_n(\cc)}$. Indeed, as shown later in the paper
the study of the orbit closures ``from below'' and ``from above'' do not lead
to a complete characterization.
This question bears a certain similarity to another open problem
at the intersection of linear algebra and algebraic geometry:
obtaining a complete characterization of the ASD
property~\cite{omeara06,omeara11}.
For application to orthogonal tensor decompositions, we note that
it suffices to study the ASD property for tuples of symmetric
matrices, and furthermore to consider only approximations by 
tuples of simultaneously diagonalizable {\em symmetric} matrices.
This holds true even for ordinary tensors, see Proposition~\ref{otasd}
and Remark~\ref{rem:otasd} at the end of the paper.

As explained in Section~\ref{orbitclosure},
if the orthogonality requirement is lifted from our tensor decompositions
one obtains instead of $\overline{\OW_n(\cc)}$ and $\overline{\OT_n(\cc)}$
the sets of tensors of border (Waring) rank at most $n$.
Obtaining explicit equations for these sets is vey much an open problem,
with applications to lower bounds in algebraic complexity theory
(see e.g.~\cite{landsbergGCT}).

In this paper we have studied decompositions of order 3 tensors.
In~\cite{boralevi17}, tensors of higher order are handled by reduction
to the case of order 3. Namely, they show that a tensor of order $d \geq 4$
admits a unitary or real orthogonal decomposition iff the same is true
for certain flattenings of the tensor. It would be interesting to find out
whether a similar property holds for complex orthogonal decompositions.
Following this approach would also require a generalization of our results
to ``rectangular'' tensors of order 3. Indeed, we have only studied
``cubic'' tensors (of format $n \times n \times n$).
But even if we start from a higher order cubic tensor (e.g., a tensor $T$
of order 4 and format $n \times n \times n \times n$), its flattenings will not
be cubic in general. For instance, we would obtain from $T$ flattenings
of format $n \times n \times n^2$.

\section{Background}

In this section we first present some background on tensors and matrices.
Indeed, as explained in the introduction simultaneous reduction of matrices
(and in particular simulataneous diagonalization) plays an important
role in this paper.
For the study of complex tensors we will also need some elements of
the theory of quadratic forms, which we present
in Section~\ref{quadratic}.

\subsection{Tensors and their slices} \label{tensorbackground}

An order 3 tensor $T \in K^{n \times n \times n}$
can be represented by the trilinear form
\begin{equation} \label{trilinear}
  t(x,y,z)=\sum_{i,j,k=1}^n T_{ijk}x_iy_jz_k
\end{equation}
where $x,y,z$ denote three $n$-tuples of variables.
There are 3 ways of decomposing $T$ into a tuple of $n$ matrices:
we can decompose in the $x$, $y$ or $z$ direction. We call the resulting
matrices the $x$-slices, $y$-slices, and $z$-slices.
For instance, the $z$-slices  are
the matrices of the bilinear forms $\partial t / \partial z_k$
($1 \leq k \leq n$).

The tensor $T$ is said to be symmetric if it is invariant under all of the six
permutations of the indices $i$, $j$, $k$. For such a tensor the $x$, $y$ and $z$ slices are identical and we simply call them ``the $n$ slices of $T$'';
each of these slices is a symmetric matrix.
To a symmetric tensor $T$ we associate the degre 3 form
$f(x_1,\ldots,x_n) = \sum_{i,j,k=1}^n T_{ijk} x_i x_j x_k$.
Note that $f$ is obtained from the trilinear form~$t$  in~(\ref{trilinear})
by setting $x=y=z$, in the same way that a quadratic form is obtained from
a bilinear form.
Since we switch freely between the language of tensors and the language of polynomials (for slices and for other notions), by ``slices of $f$'' we will mean
the slices of the corresponding symmetric tensor $T$.

In light of Definitions~\ref{waringdef} and~\ref{orthodef}, it is important
to understand how slices are affected by a linear change of variables.
Let us do this for symmetric tensors: 
we will show that the slices $S_1,\ldots,S_n$ of a symmetric tensor
$S \in \OW_n(K)$ are given by:
\begin{equation} \label{simslice}
  S_k = A^TD_kA,\ D_k=\diag(\alpha_1 a_{1k},\ldots,\alpha_n a_{nk}).
\end{equation}
Our proof  will use the following property of Hessian matrices~(\cite{Kayal11}, Lemma 5.1).
\begin{fact} \label{hessianfact}
Let  $G$ be an $n$-variate polynomial and $A \in K^{n \times n}$ a linear transformation.
 Let $F(x)=G(Ax)$. The Hessian matrices of $F$ and $G$ satisfy the relation:
 \begin{equation}  \label{hessian}
 H_F(x) = A^TH_G(Ax)A.
 \end{equation}
\end{fact}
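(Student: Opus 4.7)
The plan is a routine application of the chain rule, carried out twice. Since the fact involves only the relationship between $F(x) = G(Ax)$ and its partial derivatives, I would work directly in coordinates, keeping careful track of which factor of $A$ acquires a transpose. No analytic subtlety is involved: $G$ is a polynomial, so all manipulations are purely algebraic.

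First I would set $y = Ax$, so that $y_k = \sum_\ell A_{k\ell} x_\ell$ and $\partial y_k/\partial x_i = A_{ki}$. A single application of the chain rule then gives
\begin{equation*}
\frac{\partial F}{\partial x_i}(x) \;=\; \sum_{k=1}^n \frac{\partial G}{\partial y_k}(Ax)\, A_{ki},
\end{equation*}
which is the coordinate form of the identity $\nabla F(x) = A^T (\nabla G)(Ax)$.

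Next I would differentiate this expression with respect to $x_j$. The factor $A_{ki}$ is constant in $x$, so only the factor $(\partial G/\partial y_k)(Ax)$ contributes, and a second application of the chain rule yields
\begin{equation*}
\frac{\partial^2 F}{\partial x_i \partial x_j}(x) \;=\; \sum_{k,\ell=1}^n A_{ki}\, \frac{\partial^2 G}{\partial y_k \partial y_\ell}(Ax)\, A_{\ell j},
\end{equation*}
which is precisely the $(i,j)$-entry of the matrix product $A^T H_G(Ax) A$. This is the desired identity~(\ref{hessian}).

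There is no real obstacle in this argument; the only point demanding care is the bookkeeping of the transpose. It appears because the first differentiation brings down $A_{ki}$, i.e., the $(i,k)$-entry of $A^T$, while the second brings down $A_{\ell j}$, the $(\ell,j)$-entry of $A$. A slightly cleaner, coordinate-free variant would view the gradient as a row vector, observe that $(\nabla F)(x) = (\nabla G)(Ax) \cdot A$, and differentiate this identity directly using the matrix chain rule to read off the Hessian relation in one step.
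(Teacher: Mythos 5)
Your proof is correct: it is the standard two-fold application of the chain rule, with the transpose bookkeeping handled properly. Note that the paper itself does not prove Fact~\ref{hessianfact}; it simply cites it as Lemma~5.1 of~\cite{Kayal11}, so your argument supplies the routine verification that the paper (and its source) leave to the reader.
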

Consider any homogeneous polynomial of degree 3. The entries of $H_f$ are linear forms 
in $x_1,\ldots,x_n$. One can therefore write $H_f=x_1A_1+\cdots+x_nA_n$ where the $A_i$ 
are $n \times n$ matrices with entries in $K$. These matrices are symmetric since 
entry $(j,k)$ of $A_i$ is given by: 
\begin{equation}\label{partial3}
\displaystyle (A_i)_{jk} = \frac{\partial^3 f}{\partial x_i \partial x_j \partial x_k}.
\end{equation}
Each entry of $A_i$ therefore corresponds to a monomial of $f$. More precisely, $(A_i)_{jk}$
is obtained by multiplication of the coefficient of $x_i x_j x_k$ by 1, 2 or 6.
Equivalently, one can define $A_i$ as the Hessian matrix of $\partial f / \partial x_i$.
\begin{remark}
  It follows from~(\ref{partial3}) that the  $A_i$ are scalar multiples
  of the slices of the symmetric tensor associated to $f$,
  and the constant of proportionality is equal to 6.

  We illustrate this point on the example of the polynomial
  $f(x_1,x_2)=(x_1-x_2)^3$. The corresponding tensor is the rank 1 symmetric
  tensor $T=e^{\otimes 3}$ where $e=(1,-1)$.
  The entries of $T$ are: $T_{111}=1$, $T_{112}=-1$, $T_{122}=1$ and $T_{222}=-1$
  (the other entries are obtained from these 4 entries by symmetry of $T$).
  The two slices of $T$ are the matrices:
  $$T_1=
  \begin{pmatrix}
  1 & -1\\
  -1 & 1
  \end{pmatrix},
  T_2=-T_1
  $$
  and it is easy to check that the Hessian matrices of $\partial f / \partial x_1, \partial f / \partial x_2$ are respectively $6T_1$ and $6T_2$.
\end{remark}
\begin{lemma} \label{diaghessian}
The Hessian matrix of a polynomial $g \in K[x_1,\ldots,x_n]_3$ is diagonal if and only if $g$
is of the form (\ref{powersum}). 
\end{lemma}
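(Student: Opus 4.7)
The plan is to prove both directions by direct inspection of the monomials of $g$, using formula~(\ref{partial3}) (or rather its degree-2 analogue for second partials) as the bridge between Hessian entries and coefficients.

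For the ``if'' direction, I would simply compute: if $g = \alpha_1 x_1^3 + \cdots + \alpha_n x_n^3$, then $\partial g/\partial x_i = 3\alpha_i x_i^2$ and hence $\partial^2 g/(\partial x_i \partial x_j) = 6\alpha_i x_i$ when $i = j$ and $0$ when $i \neq j$. So $H_g = \operatorname{diag}(6\alpha_1 x_1, \ldots, 6\alpha_n x_n)$, which is diagonal.

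For the ``only if'' direction, write $g = \sum_{i \leq j \leq k} c_{ijk} x_i x_j x_k$. The assumption that $H_g$ is diagonal means $\partial^2 g/(\partial x_i \partial x_j) = 0$ whenever $i \neq j$. I would then observe that this second partial vanishes iff no monomial of $g$ is divisible by $x_i x_j$: indeed, any monomial containing both $x_i$ and $x_j$ (with $i \neq j$) yields a nonzero contribution to $\partial^2 g/(\partial x_i \partial x_j)$, and distinct monomials contribute linearly independent terms so there can be no cancellation. Applying this to every pair $i \neq j$ shows that each monomial of $g$ involves at most one variable; combined with homogeneity of degree $3$, each monomial is of the form $\alpha_i x_i^3$, giving the required form~(\ref{powersum}).

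There is no real obstacle here: the argument is a two-line verification in one direction and a monomial-counting observation in the other. The only mild subtlety to state cleanly is the claim that distinct monomials of $g$ produce non-cancelling contributions to a given second partial, which follows from the linear independence of monomials in $K[x_1, \ldots, x_n]$.
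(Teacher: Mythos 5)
Your proof is correct and follows essentially the same argument as the paper's: the ``if'' direction is a direct Hessian computation, and the ``only if'' direction observes that any mixed monomial would produce a nonzero off-diagonal entry. You simply spell out the non-cancellation point a bit more carefully than the paper's terse version, but the approach is identical.
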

\begin{proof}
If $g$ is of the required form, $H_g=\diag(6\alpha_1x_1,\ldots,6\alpha_nx_n)$. Conversely, if $H_g$
is diagonal, $g$ can only contain cubes of powers since any other monomial would give rise
to an off-diagonal term in $H_g$.
\end{proof}
We can now establish formula~(\ref{simslice}):
\begin{proposition}
  The slices $S_1,\ldots,S_n$
  of a polynomial $f\in K[x_1,\ldots,x_n]_3$ as in Definition~\ref{waringdef}
  are given by~(\ref{simslice}).
\end{proposition}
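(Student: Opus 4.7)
The plan is to chain together Fact~\ref{hessianfact} and Lemma~\ref{diaghessian}, then extract the slices by reading off the coefficients of $x_1,\ldots,x_n$ in the resulting expression for the Hessian $H_f$.

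First I would apply Fact~\ref{hessianfact} with $G=g$ and $F=f$ to obtain $H_f(x)=A^T H_g(Ax)A$. Since $g$ is of the form (\ref{powersum}), Lemma~\ref{diaghessian} (or a direct differentiation) gives
\begin{equation*}
H_g(y)=\diag(6\alpha_1 y_1,\ldots,6\alpha_n y_n).
\end{equation*}
Substituting $y=Ax$, the $i$th diagonal entry becomes $6\alpha_i (Ax)_i=6\alpha_i\sum_{k=1}^n a_{ik}x_k$, so
\begin{equation*}
H_g(Ax)=\sum_{k=1}^n x_k\,\diag(6\alpha_1 a_{1k},\ldots,6\alpha_n a_{nk})=6\sum_{k=1}^n x_k D_k.
\end{equation*}
Conjugating by $A$ gives $H_f(x)=6\sum_{k=1}^n x_k\, A^T D_k A$.

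On the other hand, from the discussion just before Lemma~\ref{diaghessian} we have the decomposition $H_f=\sum_{k=1}^n x_k A_k$, and by the preceding remark $A_k=6S_k$ where $S_k$ is the $k$th slice of the symmetric tensor associated to $f$. Matching coefficients of $x_k$ (which is legitimate because the $A_i$ are uniquely determined by $H_f$, being read off from its entries) yields $6 S_k = 6\,A^T D_k A$, that is, formula~(\ref{simslice}).

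I do not foresee a real obstacle: once one commits to expressing everything via Hessians, the argument is a one-line substitution plus matching of coefficients. The only point that requires a little care is the bookkeeping of the factor $6$ between the Hessian-based matrices $A_k$ and the slices $S_k$; as long as one uses the relation $A_k=6S_k$ from the preceding remark, the constant cancels and one gets the clean formula stated in~(\ref{simslice}).
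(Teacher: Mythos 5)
Your proof is correct and follows essentially the same route as the paper's: apply Fact~\ref{hessianfact} to $f(x)=g(Ax)$, compute $H_g$ explicitly, and use the relation $A_k=6S_k$ to pass from Hessian coefficients to slices. The paper states this in two lines; you have simply written out the substitution $y=Ax$ and the coefficient matching in full.
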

\begin{proof}
  The Hessian matrix of the polynomial $g=\alpha_1x_1^3+\ldots+\alpha_n x_n^3$ is
  $H_g=\diag(6\alpha_1x_1,\ldots,6\alpha_nx_n)$.
  Hence the result follows from Fact~\ref{hessianfact} since $A_k = 6S_k$.
\end{proof}
We will establish a similar result for ordinary tensors
in Proposition~\ref{slices}.
In particular, we will show
(following the notations of Definition~\ref{orthodef}) that the $z$-slices $T_1,\ldots,T_n$  of a trilinear form $t \in \OT_n(K)$ are given by the formula
\begin{equation} \label{sliceforward}
 T_k = A^TD_kB,\ D_k=\diag(\alpha_1 c_{1k},\ldots,\alpha_n c_{nk})
\end{equation}
where the $c_{ik}$ are the entries of $C$.
Note that~(\ref{simslice}) can also be derived from  this formula.
Consider indeed a degree 3 form $f \in \OW_n(K)$.
By Definition~\ref{waringdef}, $f(x)=h(x,x,x)$ where $h$ is the trilinear form 
$g(Ax,Ay,Az)$ and $g$ is as in~(\ref{orthodef}).
Viewed as an ordinary tensor, a symmetric tensor in $\OW_n(K)$ therefore
belongs to $\OT_n(K)$\footnote{As a sanity check, one can easily verify that
  if a symmetric tensor satisfies the conditions for membership in
  $\OW_n(\rr)$ (Theorem~\ref{realth}) or $\OW_n(\cc)$ (Theorem~\ref{complexth}) then it also satisfies the conditions for membership in
$\OT_n(\rr)$ (Theorem~\ref{realortho}) or $\OT_n(\cc)$ (Theorem~\ref{cortho}).}
and we can obtain~(\ref{simslice})
by setting $A=B=C$ in~(\ref{sliceforward}).

Finally, we mention a consequence of Fact~\ref{hessianfact} (Lemma~5.2 in~\cite{Kayal11}) which will be
useful in Section~\ref{sclosure}.
\begin{lemma} \label{lem:hessian}
  Let $f \in K[x_1,\ldots,x_n]$ be a polynomial of the form
  $$f(x_1,\ldots,x_n)=\sum_{i=1}^n a_i\ell_i(x_1,\ldots,x_n)^d$$
  where the $\ell_i$ are linearly independent linear forms, the $a_i$ are
  nonzero constants and $d \geq 2$. Then the Hessian determinant of $f$
  satisfies
  $$\det H_f(x_1,\ldots,x_n) = c \prod_{i=1}^n \ell_i(x_1,\ldots,x_n)^{d-2}$$
  where $c \in K$ is a nonzero constant. In particular, $\det H_f$ is not
  identically~0.
\end{lemma}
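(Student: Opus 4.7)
The plan is to reduce to the diagonal case $g(y)=\sum_i a_i y_i^d$ via a linear change of variables, then apply Fact~\ref{hessianfact} and take determinants.

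More precisely, let $A\in K^{n\times n}$ be the matrix whose $i$-th row gives the coefficients of $\ell_i$, so that $\ell_i(x)=(Ax)_i$. Since the $\ell_i$ are linearly independent, $A$ is invertible. Defining $g(y_1,\ldots,y_n)=\sum_{i=1}^n a_i y_i^d$, we have $f(x)=g(Ax)$. The Hessian of $g$ is the diagonal matrix $H_g(y)=\diag\bigl(d(d-1)a_1 y_1^{d-2},\ldots,d(d-1)a_n y_n^{d-2}\bigr)$. Evaluating at $y=Ax$ replaces $y_i$ by $\ell_i(x)$.

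By Fact~\ref{hessianfact}, $H_f(x)=A^T H_g(Ax) A$. Taking determinants,
\[
\det H_f(x) \;=\; (\det A)^2 \cdot \bigl(d(d-1)\bigr)^n \prod_{i=1}^n a_i \cdot \prod_{i=1}^n \ell_i(x)^{d-2},
\]
so $c=(\det A)^2\bigl(d(d-1)\bigr)^n\prod_i a_i$ gives the claimed formula.

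It remains to check that $c\neq 0$. This is immediate: $A$ is invertible so $\det A\neq 0$; the hypotheses $d\geq 2$ and $\mathrm{char}(K)=0$ imply $d(d-1)\neq 0$; and each $a_i$ is nonzero by assumption. Since moreover each $\ell_i$ is a nonzero linear form (being part of a linearly independent family), $\det H_f$ is not identically zero. There is essentially no obstacle here; the only thing to notice is that the proof really rests on the coordinate change making $g$ diagonal and on the multiplicativity of determinants, both of which are completely standard.
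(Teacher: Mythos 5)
Your proof is correct and takes exactly the approach the paper has in mind: the paper does not prove this lemma itself but states it as ``a consequence of Fact~\ref{hessianfact}'' and cites Lemma~5.2 of~\cite{Kayal11}, and your argument (write $f(x)=g(Ax)$ with $g$ diagonal, compute $H_g$, apply Fact~\ref{hessianfact}, take determinants, and observe that $(\det A)^2\,(d(d-1))^n\,\prod_i a_i\neq 0$ in characteristic~$0$) is precisely that derivation.
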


\subsection{Diagonalization and constructible sets} \label{constructible}

One of the main goals of the paper is to give necessary and sufficient
conditions for membership of symmetric tensors in $\OW_n(\cc)$ and
of ordinary tensors in $\OT_n(\cc)$.
In particular, certain matrices (slices, or products of slices)
must be diagonalizable.
Since the set of diagonalizable matrices is dense in $M_n(\cc)$,
it cannot be written as the zero set of of a system of polynomial equations.
It can however be described by polynomial equalities and inequalities, i.e.,
it is a {\em constructible subset} of $M_n(\cc)$.
This follows from the following well known result:
\begin{proposition} \label{diagmat}
  Let $K$ be a field of characteristic 0 and let 
  $\chi_M$ be the characteristic polynomial of a matrix $M \in M_n(K)$.
  Let $P_M = \chi_M / \mathrm{gcd}(\chi_M,\chi_M')$ be the squarefree part of
  $\chi_M$. The matrix $M$ is diagonalizable over $\overline{K}$ iff
  $P_M(M)=0$.
\end{proposition}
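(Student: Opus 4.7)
The plan is to reduce the statement to the well-known equivalence
``$M$ is diagonalizable over $\overline{K}$ iff its minimal polynomial $\mu_M$
splits into distinct linear factors over $\overline{K}$'' (equivalently,
iff $\mu_M$ is squarefree), together with the fact that $\chi_M$ and $\mu_M$
have exactly the same roots in $\overline{K}$ (namely, the eigenvalues of $M$).

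First I would verify that $P_M = \chi_M/\gcd(\chi_M,\chi_M')$ is really the
squarefree part of $\chi_M$. Since $K$ has characteristic 0 every irreducible
factor of $\chi_M \in K[x]$ is separable, so if we write
$\chi_M = \prod_i q_i^{e_i}$ with the $q_i$ distinct monic irreducibles, a
direct computation of the derivative gives
$\gcd(\chi_M,\chi_M') = \prod_i q_i^{e_i-1}$, hence
$P_M = \prod_i q_i$. In particular $P_M$ is squarefree and has the same
roots in $\overline{K}$ as $\chi_M$, which are the eigenvalues of $M$; since
$\mu_M$ has the same roots in $\overline{K}$ as $\chi_M$, the polynomial
$P_M$ is also the squarefree part of $\mu_M$.

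For the forward direction, assume $M$ is diagonalizable over $\overline{K}$.
Then $\mu_M$ splits over $\overline{K}$ into a product of distinct linear
factors, i.e.\ $\mu_M$ is squarefree. Combined with the previous paragraph,
this forces $\mu_M$ and $P_M$ to share the same roots with multiplicity one
in $\overline{K}$, so $\mu_M = P_M$ (both are monic with the same irreducible
factorization over $K$). Hence $P_M(M)=\mu_M(M)=0$. For the converse, assume
$P_M(M)=0$. Then $\mu_M$ divides $P_M$ in $K[x]$; since $P_M$ is squarefree,
so is $\mu_M$, and therefore $\mu_M$ factors over $\overline{K}$ into
distinct linear factors, which means $M$ is diagonalizable over
$\overline{K}$.

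There is no real obstacle here: the only place where care is needed is the
verification that the operation $p \mapsto p/\gcd(p,p')$ indeed produces the
squarefree part in characteristic 0, which is where the hypothesis on $K$
enters. Everything else is a direct application of standard facts about
$\chi_M$ and $\mu_M$.
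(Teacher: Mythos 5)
The paper does not give a proof of this proposition; it simply labels it a ``well known result'' and uses it to conclude that the sets $\OW_n(\cc)$ and $\OT_n(\cc)$ are constructible. Your argument is the standard one and it is correct: you verify that in characteristic~$0$ the operation $p\mapsto p/\gcd(p,p')$ produces the squarefree part (this uses separability of irreducible polynomials, which is exactly where the hypothesis on $K$ enters), observe that $\chi_M$, $\mu_M$ and $P_M$ all have the same set of roots in $\overline{K}$, and then reduce to the classical criterion that $M$ is diagonalizable over $\overline{K}$ iff $\mu_M$ has no repeated roots there. The two directions are handled cleanly: if $M$ is diagonalizable then $\mu_M$ is squarefree and hence equals $P_M$ (both being the monic product of the distinct irreducible factors of $\chi_M$), so $P_M(M)=0$; conversely $P_M(M)=0$ forces $\mu_M\mid P_M$, and a divisor of a squarefree polynomial is squarefree. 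Since the paper gives no proof, there is nothing to compare against; your write-up would serve as a perfectly good self-contained justification of the fact the paper takes for granted.
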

Proposition~\ref{diagmat} together with the characterizations in Sections~\ref{csection} and~\ref{sec:cordi} show that the sets
of tensors $\OW_n(\cc)$ and $\OT_n(\cc)$ are constructible.

If we restrict to symmetric matrices, it is still true that 
diagonalizable matrices are dense.
This fact will be used in Section~\ref{sclosure}.
For the sake of completeness we give the (standard) proof below.
\begin{lemma} \label{diagsymdense}
  The set of diagonalizable symmetric matrices is dense in the
  set of complex symmetric matrices.
\end{lemma}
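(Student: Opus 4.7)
The plan is as follows. Given an arbitrary complex symmetric matrix $S \in M_n(\cc)$ and $\epsilon > 0$, I will produce a diagonalizable symmetric matrix within distance $\epsilon$ of $S$. The perturbation I use is $S_t = S + tD$, where $D = \diag(1,2,\ldots,n)$ is a fixed symmetric matrix with distinct diagonal entries and $t \in \cc$ is a scalar to be chosen small. Since $D$ is symmetric, each $S_t$ is symmetric, and since any matrix in $M_n(\cc)$ with $n$ pairwise distinct eigenvalues is diagonalizable, it suffices to find $t$ of arbitrarily small modulus for which $S_t$ has distinct eigenvalues.

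The key step is to show that the discriminant $\Delta(t)$ of $\chi_{S_t}(\lambda)$, regarded as a polynomial in $\lambda$ with coefficients in $\cc[t]$, is not identically zero as a polynomial in $t$. Once this is established, $\Delta$ has only finitely many complex roots, so there exist $t$ of arbitrarily small modulus with $\Delta(t) \neq 0$. For any such $t$ the matrix $S_t$ is symmetric, has $n$ distinct eigenvalues (hence is diagonalizable), and lies within distance $|t| \cdot \|D\|$ of $S$, which can be made smaller than $\epsilon$.

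To verify that $\Delta \not\equiv 0$, I would examine the behavior of the spectrum for large $|t|$. Writing $S_t/t = D + S/t$, we see that $S_t/t$ converges to $D$ as $|t| \to \infty$. Since $D$ has $n$ distinct eigenvalues, continuity of the spectrum implies that $S_t/t$, and hence $S_t$, has $n$ distinct eigenvalues for all sufficiently large $|t|$; in particular $\Delta(t) \neq 0$ for such $t$, so $\Delta$ is not the zero polynomial.

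The only step requiring any care is the nonvanishing of $\Delta$; everything else is routine. One could alternatively invoke Proposition~\ref{diagmat} to rephrase diagonalizability as a polynomial condition on the entries and then observe that the locus where $S_t$ fails to be diagonalizable is cut out by the vanishing of $\Delta$, which is a proper Zariski-closed subset of the line $\{S + tD : t \in \cc\}$ and therefore avoids every punctured neighborhood of $t=0$ outside a finite set.
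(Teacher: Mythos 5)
Your proof is correct. It uses the same central object as the paper's argument, the discriminant of the characteristic polynomial, but it is realized differently: the paper treats $\mathrm{Disc}_S$ as a polynomial in all $n(n+1)/2$ independent entries of the symmetric matrix, checks that it is not identically zero by evaluating at one diagonal matrix with distinct entries, and then appeals to the general fact that the zero set of a nonzero polynomial on $\cc^N$ has dense complement. You instead fix a perturbation direction $D=\diag(1,\ldots,n)$, restrict the discriminant to the one-parameter family $S_t=S+tD$, and show $\Delta(t)\not\equiv 0$ by a rescaling-and-spectral-continuity argument at $|t|\to\infty$, so that $\Delta$ has only finitely many roots and one can pick arbitrarily small $t$ avoiding them. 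Your route is more constructive (it hands you an explicit approximating matrix on a line through $S$) and avoids invoking the ``complement of a proper algebraic subset is dense'' principle, but pays for this with the extra large-$|t|$ limit step, which is genuinely needed since evaluating along the line $\{S+tD\}$ at a single nice point (the paper's trick) is not available when $S$ itself is degenerate. Both proofs are of comparable length and rely on equally standard facts; yours is a valid, slightly more hands-on alternative.
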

\begin{proof}
  We will prove a stronger result: the set of symmetric matrices with $n$
  distinct eigenvalues is dense in the set of symmetric matrices of size $n$.
  In order to show this we associate to a symmetric matrix $S$
  of size $n$ the discriminant $\mathrm{Disc}_S$ of its characteristic
  polynomial. Our matrix has $n$ distinct eigenvalues if and only if
  $\mathrm{Disc}_S \neq 0$. Note that this discriminant can be viewed
  as a polynomial in the $n(n+1)/2$ upper triangular entries of $S$.
  Therefore, the conclusion will follow if we can show that this polynomial
  is not identically 0. This is clear since $\mathrm{Disc}_S \neq 0$
  if we take $S$ to be (for instance) a diagonal matrix with distinct
  diagonal entries.
 \end{proof}

\subsection{Simultaneous diagonalization} \label{sec:simdiag}

As mentioned in the introduction several of our results hinge
on simultaneous reduction of matrices, and in particular
on simultaneous diagonalization.
\begin{lemma} \label{lincomb}
  Let $A_1,\ldots,A_k \in M_n(K)$ be a tuple of simultaneously diagonalizable
  matrices, and let $S \subseteq K$ be a finite set of size $|S| > n(n-1)/2$.
  Then there exist $\alpha_2,\ldots,\alpha_k$ in $S$ such that any transition
  matrix which diagonalizes $A_1+\alpha_2 A_2 + \ldots + \alpha_k A_k$
  must also diagonalize all of the matrices $A_1,\ldots, A_k$.
\end{lemma}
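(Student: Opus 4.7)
Since the $A_i$ are simultaneously diagonalizable, fix an invertible $P \in M_n(K)$ with $P^{-1} A_i P = \diag(\lambda_{i,1}, \ldots, \lambda_{i,n})$ for every $i$. My plan is to choose $\alpha_2, \ldots, \alpha_k \in S$ so that $M(\alpha) := A_1 + \alpha_2 A_2 + \cdots + \alpha_k A_k$ \emph{separates the joint spectrum}: whenever two indices $j \neq j'$ are distinguished by some $A_i$, they should give distinct eigenvalues of $M(\alpha)$. Observe that $P$ also diagonalizes $M(\alpha)$, with eigenvalues $\mu_j(\alpha) = \lambda_{1,j} + \sum_{i=2}^k \alpha_i \lambda_{i,j}$; hence whenever $\lambda_{i,j} \neq \lambda_{i,j'}$ for at least one $i$, the difference $\Delta_{j,j'}(\alpha) := \mu_j(\alpha) - \mu_{j'}(\alpha)$ is a nonzero affine polynomial in the variables $\alpha_2, \ldots, \alpha_k$.

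Form the product $F(\alpha) = \prod \Delta_{j,j'}(\alpha)$ taken over all such \emph{separating} pairs $j < j'$. Since there are at most $\binom{n}{2} = n(n-1)/2$ such pairs, $F$ is a nonzero polynomial of total degree at most $n(n-1)/2$. By the deterministic Schwartz--Zippel lemma (or by induction on $k$, picking each $\alpha_i$ in turn so as to avoid the finitely many roots of a univariate polynomial), the hypothesis $|S| > n(n-1)/2$ yields $\alpha_2, \ldots, \alpha_k \in S$ with $F(\alpha) \neq 0$, i.e.\ $\mu_j(\alpha) \neq \mu_{j'}(\alpha)$ for every separating pair.

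Now fix such an $\alpha$ and set $M := M(\alpha)$. By construction, any two indices $j, j'$ with $\mu_j(\alpha) = \mu_{j'}(\alpha)$ must also satisfy $\lambda_{i,j} = \lambda_{i,j'}$ for every $i$; equivalently, each $A_i$ acts as a scalar on every eigenspace of $M$. Let $Q$ be any transition matrix diagonalizing $M$. Each column of $Q$ is an eigenvector of $M$ and therefore lies in some eigenspace $E_\mu$; since $A_i$ acts as a scalar on $E_\mu$, that column is automatically an eigenvector of $A_i$ as well. Hence $Q$ simultaneously diagonalizes every $A_i$, as required.

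The only real subtlety, I think, is to recognize that one cannot simply insist that $M$ have $n$ distinct eigenvalues: if, for instance, the $A_i$ all coincide and share a repeated eigenvalue, no such $M$ can possibly exist. The correct target is separation of the joint spectrum, and the count $n(n-1)/2$ is exactly the number of index pairs one might need to separate. With one linear nonvanishing condition per pair, a Schwartz--Zippel-style product argument fits this count precisely, and nothing sharper is needed.
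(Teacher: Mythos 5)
Your proof is correct, and it takes a genuinely different route from the paper's. The paper proves the lemma by induction on $k$, invoking as its base case $k=2$ a result from a prior reference (Proposition~2 of the cited Koiran--Ressayre preprint) and then bootstrapping: having found $\alpha_2,\ldots,\alpha_{k-1}$ for the first $k-1$ matrices, it applies the base case once more to the pair $(M, A_k)$ where $M=A_1+\alpha_2A_2+\cdots+\alpha_{k-1}A_{k-1}$. Your argument instead is direct and self-contained: you pass to the common eigenbasis $P$, observe that the eigenvalues $\mu_j(\alpha)$ of $M(\alpha)$ are affine in $\alpha$, multiply together the nonzero affine differences $\Delta_{j,j'}$ over all pairs that some $A_i$ separates, and invoke a Schwartz--Zippel/combinatorial-nullstellensatz count on the degree-$\le n(n-1)/2$ product to get a nonvanishing point in $S^{k-1}$. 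The concluding step---that any matrix $Q$ diagonalizing $M$ must diagonalize each $A_i$ because each $A_i$ acts as a scalar on every eigenspace of $M$---is handled cleanly and is the crucial observation the paper delegates to the base case. Your version unfolds the recursion and makes transparent exactly where the bound $|S|>n(n-1)/2$ comes from (it is the number of unordered index pairs, hence the degree of the separating polynomial), which the paper's inductive reduction leaves implicit. Both are valid; yours trades the paper's brevity for self-containment and an explanation of the numerical bound.
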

\begin{proof}
  We proceed by induction on $k$.
  The base case $k=2$ is Proposition~2 in~\cite{koiran2018orbits}.
  Assume now that the result holds true at step $k-1$.
  By induction hypothesis there exist $\alpha_2,\ldots,\alpha_{k-1}$ in $S$ such that any transition
  matrix which diagonalizes $M=A_1+\alpha_2 A_2 + \ldots + \alpha_{k-1} A_{k-1}$
  must also diagonalize $A_1,\ldots, A_{k-1}$.
  Moreover, by the base case there exists $\alpha_k \in S$ such that
  any transition matrix $T$ which diagonalizes $M+\alpha_k A_k$
  must diagonalize $M$ and~$A_k$. Therefore such a $T$ must diagonalize
  all of the $A_1,\ldots,A_k$.
\end{proof}
This lemma has the following important consequence.
\begin{theorem} \label{th:simdiag}
  Let $A_1,\ldots,A_k$ be a tuple of symmetric matrices of  $M_n(K)$ where $K=\rr$ or $K=\cc$.
  If the $A_i$ are simultaneously diagonalizable then they are  simultaneously diagonalizable by an orthogonal change of basis, i.e., there is a real (respectively, complex)  orthogonal matrix $P$ such that the $k$ matrices $P^TA_iP$ are diagonal.
 \end{theorem}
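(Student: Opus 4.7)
The plan is to reduce the theorem to a single-matrix statement via Lemma~\ref{lincomb}, and then to handle the resulting statement using the real spectral theorem in the real case and a bilinear-form argument in the complex case.

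First I would apply Lemma~\ref{lincomb} to obtain scalars $\alpha_2, \ldots, \alpha_k \in K$ such that any transition matrix diagonalizing $M := A_1 + \alpha_2 A_2 + \cdots + \alpha_k A_k$ automatically diagonalizes each $A_i$. The matrix $M$ is symmetric (a linear combination of symmetric matrices) and diagonalizable (a linear combination of simultaneously diagonalizable matrices). It therefore suffices to prove the following single-matrix statement: every diagonalizable symmetric matrix in $M_n(K)$ admits an orthogonal diagonalizing transition matrix $P$. Such a $P$ will then simultaneously orthogonally diagonalize all of the $A_i$ by Lemma~\ref{lincomb}.

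For $K = \rr$ the real spectral theorem delivers this directly. For $K = \cc$, I would work with the (non-Hermitian) symmetric bilinear form $\langle x, y \rangle := x^T y$ on $\cc^n$. A one-line computation using $M = M^T$ shows that eigenspaces $E_\lambda$ and $E_\mu$ of $M$ with $\lambda \neq \mu$ are mutually orthogonal with respect to this form. Combined with the eigenspace decomposition $\cc^n = \bigoplus_\mu E_\mu$ provided by diagonalizability, this implies that the restriction of $\langle \cdot, \cdot \rangle$ to each $E_\lambda$ is non-degenerate: any $v \in E_\lambda$ orthogonal to all of $E_\lambda$ would be orthogonal to every $E_\mu$, hence to all of $\cc^n$, forcing $v = 0$. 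Since every non-degenerate symmetric bilinear form over $\cc$ admits a basis $(v_1, \ldots, v_d)$ with $v_i^T v_j = \delta_{ij}$, one obtains such an orthonormal basis inside each eigenspace; concatenating these across eigenspaces yields a matrix $P$ with $P^T P = \mathrm{Id}_n$ that diagonalizes $M$.

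The main obstacle is the complex case. A symmetric complex matrix need not be diagonalizable at all, and even when it is, the existence of an orthonormal basis of eigenvectors is not immediate: it hinges on the non-degeneracy of $\langle \cdot, \cdot \rangle$ on each individual eigenspace, which is precisely where I would use the diagonalizability hypothesis (and hence the splitting $\cc^n = \bigoplus_\mu E_\mu$) in an essential way. Without that splitting, the eigenspace-to-eigenspace orthogonality alone would not rule out isotropic obstructions within a single $E_\lambda$, and the argument would collapse.
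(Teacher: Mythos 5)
Your proof is correct and follows the paper's overall strategy: both reduce to a single matrix $M = A_1 + \alpha_2 A_2 + \cdots + \alpha_k A_k$ via Lemma~\ref{lincomb}, observe that $M$ is symmetric and (by simultaneous diagonalizability of the $A_i$) diagonalizable, and then orthogonally diagonalize $M$. The difference lies entirely in how the single-matrix complex fact is handled. The paper simply cites Theorem~4.4.27 of Horn and Johnson for the statement that a diagonalizable complex symmetric matrix is orthogonally diagonalizable, whereas you reprove it from scratch: you use $M = M^T$ to show that distinct eigenspaces are orthogonal under the bilinear form $x^T y$, deduce from the eigenspace decomposition $\cc^n = \bigoplus_\mu E_\mu$ that the form is non-degenerate on each eigenspace, and then invoke the classification of non-degenerate symmetric bilinear forms over $\cc$ to extract an orthonormal eigenbasis inside each $E_\lambda$. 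This is a clean, self-contained alternative that makes transparent exactly where diagonalizability is used (to rule out isotropic degeneracies within an eigenspace), which the paper's citation hides; the paper's version is shorter but opaque on this point.
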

\begin{proof}
  We begin with $K=\rr$. Let us fix $\alpha_2,\ldots,\alpha_k$ as in
  the previous lemma. Since the matrix
  $S=A_1+\alpha_2 A_2 + \ldots + \alpha_k A_k$
  is real symmetric, it can be diagonalized by a real orthogonal matrix.
  By the lemma, such a matrix will diagonalize all of the $A_i$.

  For $K=\cc$, the difference with the real case is that it is no longer true
  that all symmetric matrices are diagonalizable
  (see e.g. Example~\ref{borderex} and the beginning of Section~\ref{csection}\footnote{The existence of such examples is due to the presence of isotropic vectors in $\cc^n$; more on this topic in Section~\ref{quadratic}.}).
  It is however still the case that if a complex symmetric matrix is
  diagonalizable, then it can be diagonalized by an orthogonal matrix
  (Theorem 4.4.27 of~\cite{horn13}).
  In the present situation, $S$ will be diagonalizable for any choice
  of complex numbers $\alpha_2,\ldots,\alpha_k$ since the $A_i$ are 
  simultaneously diagonalizable.
  The result therefore follows from Lemma~\ref{lincomb} like in the real case.
\end{proof}
It is possible to give a direct (non inductive) proof of this theorem based on Theorem 6.4.16, Corollary 6.4.18 and Corollary 6.4.19 of \cite{topics}. Moreover, this argument shows that the conclusion of Theorem \ref{th:simdiag} holds not only when the $A_i$ are symmetric, but also when they are all skew-symmetric or all orthogonal (Roger Horn, personal communication).

\subsection{Quadratic forms} \label{quadratic}

As mentioned before, for $K=\cc$ we need some elements of the theory
of quadratic forms. We will work only
with the quadratic form $\sum_{i=1}^n x_i^2$
on $\cc^n$ and the associated bilinear map 
$\langle x,y \rangle = \sum_{i=1}^n x_iy_i$
but much of what follows applies to an arbitrary
nondegenerate quadratic space.
We will refer to this bilinear map as the ``Euclidean inner product'',
but this is an 
abuse of terminology since we are working over the field of complex numbers.
Compared to the case $K=\rr$ the main complication (and the reason why
one often works with Hermitian and unitary matrices rather than
symmetric and orthogonal matrices) is that $\cc^n$ contains isotropic vectors.
\begin{definition} \label{def:isotropic}
  A vector $v \in \cc^n \setminus \{0\}$ is isotropic if it is self-orthogonal,
  i.e., if $\langle v,v \rangle =0$.

  More generally,
  a subspace $V \subseteq \cc^n$, $V \neq \{0\}$
  is said to be {\em totally isotropic}
  if the Euclidean inner product is identically 0 on $V$, or equivalently
  if $V$ contains only isotropic vectors.
\end{definition}
Whether the null vector is defined to be isotropic or not is a matter of convention.
\begin{theorem} \label{supplement}
  Let $U$ be a totally isotropic subspace of $\cc^n$ with basis
  $(u_1,\ldots,u_r)$. There exists another totally isotropic subspace $U'$,
  disjoint from $U$, with basis $(u'_1,\ldots,u'_r)$ such that
  $\langle u_i,u'_j \rangle = \delta_{ij}$.
 \end{theorem}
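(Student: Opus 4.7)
I would prove this by induction on $r = \dim U$, building up the dual basis $(u_1', \ldots, u_r')$ one vector at a time. The argument splits into a base case (constructing a single isotropic partner for a given isotropic vector) and an inductive step (reducing to the base case inside a smaller orthogonal complement).

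For the base case $r=1$: since the form $\langle\cdot,\cdot\rangle$ is nondegenerate and $u_1 \neq 0$, there exists $v \in \cc^n$ with $\langle u_1, v\rangle = 1$. The vector $v$ need not be isotropic, but the correction
$$u_1' := v - \tfrac{1}{2}\langle v,v\rangle\, u_1$$
does the job: a direct computation using $\langle u_1,u_1\rangle = 0$ shows $\langle u_1,u_1'\rangle = 1$ and $\langle u_1', u_1'\rangle = \langle v,v\rangle - \langle v,v\rangle\langle u_1,v\rangle + \tfrac14\langle v,v\rangle^2\langle u_1,u_1\rangle = 0$.

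For the inductive step, assume the theorem holds for totally isotropic subspaces of dimension $r-1$. Set $U_1 = \mathrm{span}(u_1,\ldots,u_{r-1})$ and apply the hypothesis to obtain a totally isotropic $U_1' = \mathrm{span}(u_1',\ldots,u_{r-1}')$ with $\langle u_i, u_j'\rangle = \delta_{ij}$ for $i,j \leq r-1$. The sum $H := U_1 + U_1'$ has dimension $2(r-1)$ and its Gram matrix in the basis $(u_1,\ldots,u_{r-1},u_1',\ldots,u_{r-1}')$ is the standard hyperbolic matrix $\bigl(\begin{smallmatrix}0 & I\\ I & 0\end{smallmatrix}\bigr)$, which is invertible; hence the form restricted to $H$ is nondegenerate and $\cc^n = H \oplus H^\perp$, with the form on $H^\perp$ again nondegenerate. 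The orthogonal projection of $u_r$ onto $H$ equals
$$P(u_r) = \sum_{i=1}^{r-1}\langle u_r,u_i'\rangle u_i + \sum_{i=1}^{r-1}\langle u_r,u_i\rangle u_i' = \sum_{i=1}^{r-1}\langle u_r,u_i'\rangle u_i,$$
the second sum vanishing because $U$ is totally isotropic. Thus $\tilde u_r := u_r - P(u_r)$ lies in $U$ (it is a linear combination of $u_1,\ldots,u_r$), so it is isotropic; it is nonzero because $u_r \notin U_1$; and by construction $\tilde u_r \in H^\perp$. Applying the base case inside the nondegenerate space $H^\perp$ produces an isotropic $u_r' \in H^\perp$ with $\langle u_r', \tilde u_r\rangle = 1$. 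Since $u_r' \perp U_1$, we have $\langle u_r', u_r\rangle = \langle u_r', \tilde u_r\rangle = 1$, and all other required orthogonality relations hold because $u_r' \in H^\perp$.

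It remains to check that $U' := \mathrm{span}(u_1',\ldots,u_r')$ is totally isotropic and disjoint from $U$. Total isotropy of the extended list follows because $u_r' \in H^\perp$ is orthogonal to each $u_j'$ for $j < r$ and is itself isotropic; combined with the inductive total isotropy of $U_1'$, every pairing $\langle u_i', u_j'\rangle$ vanishes. For disjointness, suppose $v = \sum a_i u_i = \sum b_j u_j'$; pairing with $u_k'$ yields $a_k = \langle v, u_k'\rangle = \sum b_j\langle u_j', u_k'\rangle = 0$ for each $k$, so $v = 0$. The main subtlety in this argument — and the step I would want to verify carefully — is that the projection $P(u_r)$ onto $H$ automatically lies inside $U$ itself; this is what ensures $\tilde u_r$ remains isotropic and allows the reduction to the base case in $H^\perp$ to go through cleanly.
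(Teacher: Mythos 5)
Your inductive strategy is the standard one (the paper itself doesn't prove the result; it cites Clark's lecture notes, which build isotropic supplements by a similar recursion), and the observation that the projection $P(u_r)$ lands in $U$ — hence $\tilde u_r$ stays isotropic — is the right key step. However, there is a genuine gap in the verification of the dual-pairing conditions. You conclude the inductive step with ``all other required orthogonality relations hold because $u_r' \in H^\perp$,'' but the relation $\langle u_r, u_j'\rangle = 0$ for $j < r$ does not involve $u_r'$ at all, and membership of $u_r'$ in $H^\perp$ cannot enforce it. It can in fact fail: in $\cc^4$ take $U = \operatorname{span}(e_1+ie_2,\,e_3+ie_4)$, apply your base case to $u_1 = e_1+ie_2$ with the (legitimate) choice $v = e_1 + e_3$, obtaining $u_1' = e_3 - ie_2$; then $\langle u_2, u_1'\rangle = \langle e_3+ie_4,\, e_3-ie_2\rangle = 1 \neq 0$. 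Your disjointness argument at the end also silently invokes the full identity $\langle u_i, u_k'\rangle = \delta_{ik}$, so it too depends on the missing relation.

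The fix is a final correction step, analogous to Gram--Schmidt: after constructing $u_r'$, replace $u_j'$ by $u_j'' := u_j' - \langle u_r, u_j'\rangle\, u_r'$ for $j < r$. Because $u_r' \in H^\perp$ is isotropic and orthogonal to every $u_j'$, one checks that the modified vectors remain pairwise isotropic, the pairings $\langle u_i, u_j''\rangle = \delta_{ij}$ are preserved for $i < r$, the pairings $\langle u_r, u_j''\rangle$ vanish as required, and $\langle u_j'', u_r'\rangle = 0$. Only with this adjustment does the induction close; without it, the constructed $(u_1',\ldots,u_r')$ need not be a dual basis to $(u_1,\ldots,u_r)$.
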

This lemma applies not only to $\cc^n$ but to any nondegenerate quadratic
space. For a proof see Theorem 6.2 in the lecture notes~\cite{clark},
where $U'$ is called
an ``isotropic supplement'' to $U$.
\begin{corollary} \label{total}
  Let $(u_1,\ldots,u_k)$ be a tuple of linearly independent pairwise
  orthogonal vectors of $\cc^n$.
  If there are $r$ isotropic vectors in this tuple
  then $r+k \leq n$. In particular, if $U$ is a totally isotropic subspace
  of $\cc^n$ then $\dim U \leq n/2$.
\end{corollary}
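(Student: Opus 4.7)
The plan is to use Theorem~\ref{supplement} to augment the isotropic part of the tuple with $r$ ``dual'' vectors that are also orthogonal to the non-isotropic part, thereby exhibiting $k+r$ linearly independent vectors in $\cc^n$. After reordering, assume $u_1,\ldots,u_r$ are isotropic and $u_{r+1},\ldots,u_k$ are not, and set $U=\mathrm{span}(u_1,\ldots,u_r)$, $V=\mathrm{span}(u_{r+1},\ldots,u_k)$. The Gram matrix of the full tuple is block diagonal: the $U$-block vanishes by isotropy and pairwise orthogonality, while the $V$-block is diagonal with nonzero entries. Thus $V$ is nondegenerate, $U$ is totally isotropic, and $U\perp V$, so $U\subseteq V^{\perp}$; moreover $V^{\perp}$ is itself a nondegenerate quadratic space, as the orthogonal complement of a nondegenerate subspace in the nondegenerate ambient space.

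I would then apply Theorem~\ref{supplement} \emph{inside} $V^{\perp}$ to the totally isotropic subspace $U$, obtaining an isotropic supplement $U'\subseteq V^{\perp}$ with basis $u'_1,\ldots,u'_r$ satisfying $\langle u_i,u'_j\rangle=\delta_{ij}$. Because $U'\subseteq V^{\perp}$, the additional relations $\langle u_l,u'_j\rangle=0$ for $l>r$ hold automatically. The one nontrivial idea is precisely to apply Theorem~\ref{supplement} within $V^{\perp}$ rather than in all of $\cc^n$: a naive application would produce a supplement whose inner products with $u_{r+1},\ldots,u_k$ are uncontrolled, and one would then have to correct by an ad hoc projection step.

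To conclude, I would check that the $k+r$ vectors $u_1,\ldots,u_k,u'_1,\ldots,u'_r$ are linearly independent. Given a linear relation, pairing it with each $u_m$ for $m\le r$ isolates and kills the coefficient of $u'_m$ (using $\langle u_i,u_m\rangle=0$, $\langle u_l,u_m\rangle=0$ for $l>r$, and $\langle u'_j,u_m\rangle=\delta_{jm}$); pairing with each $u_m$ for $m>r$ isolates and kills the coefficient of $u_m$ (using $\langle u'_j,u_m\rangle=0$ and the nonzero diagonal entry $\langle u_m,u_m\rangle$); the residual relation $\sum_{i\le r}\alpha_i u_i=0$ then forces $\alpha_i=0$ by the linear independence hypothesis. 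This yields $k+r\le n$. For the second assertion, any basis of a totally isotropic subspace $U$ is a pairwise orthogonal tuple of isotropic vectors, so the first part of the corollary applies with $k=r=\dim U$, giving $2\dim U\le n$.
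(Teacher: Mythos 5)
Your proof is correct, but it takes a genuinely different (and somewhat heavier) route than the paper's, and the motivation you give for the extra work is not actually accurate. The paper applies Theorem~\ref{supplement} in the ambient space $\cc^n$ as is, with no control on $\langle u'_j,u_l\rangle$ for $l>r$, and then shows directly that $U'\cap(U\oplus V)=\{0\}$: if $u'=u+v$ with $u'\in U'$, $u\in U$, $v\in V$, then for each $i\le r$ one has $\langle u',u_i\rangle=\alpha_i$ on the $U'$ side but $\langle u',u_i\rangle=\langle u,u_i\rangle+\langle v,u_i\rangle=0$ on the $U\oplus V$ side (the first term vanishes because $U$ is totally isotropic, the second because $u_1,\dots,u_k$ are pairwise orthogonal). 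This forces $u'=0$, so $U'\oplus U\oplus V\subseteq\cc^n$ gives the bound. Notice that only pairings against the isotropic $u_i$'s are used; the ``uncontrolled'' inner products $\langle u'_j,u_l\rangle$ never enter, and no ``ad hoc projection step'' is needed. Your own linear-independence check has the same feature if read carefully: once you kill the $u'_j$-coefficients by pairing with $u_1,\dots,u_r$, the remaining relation involves only $u_1,\dots,u_k$, whose linear independence is a hypothesis, so you never actually need $\langle u'_j,u_m\rangle=0$ for $m>r$.

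That said, what your detour through $V^\perp$ buys you is nontrivial: establishing that $V^\perp$ is nondegenerate and applying Theorem~\ref{supplement} inside it is exactly a proof of Corollary~\ref{orthosupplement} (the paper proves that corollary separately, via Witt's theorem, and uses it later in Lemma~\ref{approx}). So your argument proves a stronger statement than is needed for Corollary~\ref{total}, and in fact gives a shorter alternative proof of Corollary~\ref{orthosupplement} than the one in the paper. If economy for the present corollary is the goal, the paper's ``apply the lemma in $\cc^n$ and pair only against the isotropic vectors'' is the leaner argument; if you want $U'\perp V$ anyway for later use, your route is a nice consolidation.
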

\begin{proof}
  Assume for instance that $u_1,\ldots,u_r$ are the isotropic vectors
  in this tuple.
  Let $U$ be the (totally isotropic) subspace spanned by $u_1,\ldots,u_r$
  and let $V$ be the subspace spanned by the non-isotropic vectors
  $u_{r+1},\ldots,u_k$. Let $U'$ be the ``isotropic supplement'' to $U$
  provided by Theorem~\ref{supplement}.
  We claim that $U'$ is disjoint
  from $U \oplus V$. Suppose indeed that $u'=u+v$ with $u' \in U'$,
  $u \in U$, $v \in V$.
  Let us write $u'$ as $u'=\sum_{j=1}^r \alpha_j u'_j$ where
  $(u'_1,\ldots,u'_r)$ is the basis of $U'$ provided
  by Theorem~\ref{supplement}. For any $i \leq r$
  we have $\langle u',u_i \rangle = \alpha_i$
  since $\langle u_i,u'_j \rangle = \delta_{ij}$.
  On the other hand, we have
  $$\langle u',u_i \rangle = \langle u + v, u_i \rangle =
  \langle u , u_i \rangle + \langle v , u_i \rangle =0.$$
  In the last equality we have used the fact that $\langle u , u_i \rangle =0$
  (since $U$ is totally isotropic) and $\langle v , u_i \rangle =0$
  (since $u_1,\ldots,u_k$ are pairwise orthogonal).
  This proves the claim
  since we have shown that $\alpha_i=0$ for all $i$, i.e., $u'=0$.

  The conclusion of Corollary~\ref{total} follows directly from the claim:
  we have $U' \oplus U \oplus V \subseteq \cc^n$
  so $\dim U + \dim V + \dim U' \leq n$; but $\dim U = \dim U'=r$
  and $\dim V = k-r$.
\end{proof}
We will also use the following version of Witt's extension theorem,
a cornerstone of the theory of quadratic forms.
\begin{theorem} \label{witt}
  Let $U$ be a linear subspace of $\cc^n$. Any isometric embedding
  $\phi:U \rightarrow \cc^n$ extends to an isometry $F$ of $\cc^n$.
\end{theorem}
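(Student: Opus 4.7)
The plan is to prove Theorem~\ref{witt} by strong induction on $r = \dim U$, with the trivial base case $r = 0$ handled by $F = \mathrm{id}_{\cc^n}$. In the inductive step I would distinguish two cases according to whether $U$ is totally isotropic.

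If $U$ contains a non-isotropic vector $u$, I would set $U_0 = U \cap u^\perp$, a hyperplane of $U$, and apply the inductive hypothesis to $\phi|_{U_0}$ (whose domain has dimension $r-1$) to obtain an isometry $F_0$ of $\cc^n$ extending it. After replacing $\phi$ by $F_0^{-1} \circ \phi$, I may assume $\phi$ fixes $U_0$ pointwise, so it suffices to construct an isometry $G$ of $\cc^n$ that fixes $U_0$ and sends $u$ to $v := \phi(u)$. The equalities $\langle v, v\rangle = \langle u, u\rangle$ and $v \perp U_0$ (both forced by $\phi|_{U_0} = \mathrm{id}$ and $\phi$ being an isometry) give $u \pm v \in U_0^\perp$; combined with the identity $\langle u-v, u-v\rangle + \langle u+v, u+v\rangle = 4\langle u, u\rangle \neq 0$, at least one of $u - v$, $u + v$ is non-isotropic. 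A Householder reflection $H_w(x) = x - 2\frac{\langle x, w\rangle}{\langle w, w\rangle} w$ about a non-isotropic $w \in U_0^\perp$ is an involutive isometry of $\cc^n$ that fixes $U_0$, so I would take $G = H_{u-v}$ in the first sub-case and $G = H_v \circ H_{u+v}$ (which sends $u \mapsto -v \mapsto v$, using $\langle v, v\rangle = \langle u, u\rangle \neq 0$) in the second.

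If $U$ is totally isotropic, I would fix a basis $(u_1,\ldots,u_r)$ of $U$ and invoke Theorem~\ref{supplement} to obtain an isotropic supplement $U'$ with dual basis $(u_1',\ldots,u_r')$ satisfying $\langle u_i, u_j'\rangle = \delta_{ij}$. Applying the same theorem to $V = \phi(U)$ with basis $(\phi(u_1),\ldots,\phi(u_r))$ yields an isotropic supplement $V'$ with dual basis $(v_1',\ldots,v_r')$. I would then extend $\phi$ to $\tilde\phi : U \oplus U' \to \cc^n$ by setting $\tilde\phi(u_i) = \phi(u_i)$ and $\tilde\phi(u_i') = v_i'$. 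In the chosen bases, both $U \oplus U'$ and $V \oplus V'$ carry the hyperbolic Gram matrix $\bigl(\begin{smallmatrix} 0 & I \\ I & 0 \end{smallmatrix}\bigr)$, so $\tilde\phi$ is an isometric embedding; moreover $\langle u_1 + u_1', u_1 + u_1'\rangle = 2$ shows $U \oplus U'$ is not totally isotropic, so the previous case applies to $\tilde\phi$ and furnishes an isometry of $\cc^n$ extending $\phi$.

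The main subtlety, and the chief obstacle to formalizing this plan, is the inductive bookkeeping: the totally isotropic case at dimension $r$ appeals to the non-totally-isotropic case at the strictly larger dimension $2r$. I would resolve this by running the induction jointly on both subcases, exploiting that (i) the non-totally-isotropic case at dimension $m$ only calls back on the theorem for dimensions $\leq m-1$, and (ii) by Corollary~\ref{total}, the totally isotropic case only arises for $r \leq n/2$, keeping the enlarged dimension $2r \leq n$. A careful ordering of the recursion (e.g.\ inducting on $r$ with priority given to the non-totally-isotropic case at each level, and using that any totally isotropic subspace triggering further recursion has dimension at most $n/2$) then shows the reduction is well-founded, since the ambient dimension $n$ is finite.
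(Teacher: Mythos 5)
The paper does not actually prove Theorem~\ref{witt}: it cites O'Meara, Scharlau, and Clark's lecture notes for a proof, so there is no internal argument to compare against. Your overall strategy (Householder reflections in the non-degenerate direction, hyperbolic extension when $U$ is totally isotropic) is the textbook approach, and the individual constructions are correct: the parallelogram identity shows one of $u\pm v$ is non-isotropic, the two reflection sub-cases do send $u$ to $v$ while fixing $U_0$, and the Gram-matrix computation in the totally isotropic case correctly produces an isometric embedding of a non-totally-isotropic space of dimension $2r$.

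The gap is in the well-foundedness of your induction, which you flag as ``the main subtlety'' but do not resolve. Your recursion, viewed on pairs $(\dim U, \text{type})$ with the ambient space fixed at $\cc^n$, really does contain a cycle: a non-totally-isotropic $U$ of dimension $2$ can have $U\cap u^\perp$ totally isotropic of dimension $1$ for \emph{every} non-isotropic $u\in U$ (take $U=\mathrm{span}(e_1+ie_2,\,e_3)$ in $\cc^4$), and a totally isotropic $U$ of dimension $1$ feeds back into the non-totally-isotropic case at dimension $2$. Your proposed remedy --- inducting on $r$ with ``priority'' to the non-totally-isotropic case and appealing to $2r\leq n$ --- does not break this loop: to prove the non-totally-isotropic case at dimension $2$ you need the theorem at dimension $1$ (which includes the totally isotropic subcase), and that in turn calls the non-totally-isotropic case at dimension $2$. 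The finiteness of $n$ alone is not enough, because the two subcases bounce between the same dimensions without a strictly decreasing measure.

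There are two standard ways to repair this, and your write-up supplies the ingredients for neither. (a) Induct on the \emph{ambient} dimension, not on $\dim U$: after arranging $\phi(u)=u$ with $u$ non-isotropic, observe that $\phi$ maps $U_0=U\cap u^\perp$ into $u^\perp$, which is a nondegenerate quadratic space of dimension $n-1$, and apply Witt's theorem there; the totally isotropic case then calls the non-totally-isotropic case only within the same ambient dimension, so the measure $n$ strictly decreases at every pass through the non-totally-isotropic case. This requires stating the theorem for arbitrary nondegenerate quadratic spaces (as the paper itself remarks), not just for $\cc^n$ with the standard form. (b) Alternatively, keep the ambient space fixed but notice that the subspace $U\oplus U'$ produced by the hyperbolic extension is \emph{nondegenerate}, and that peeling off a non-isotropic vector from a nondegenerate subspace yields a nondegenerate (hence non-totally-isotropic) subspace; so along any recursion path the totally isotropic case can occur at most once, after which the dimension strictly decreases. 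Either observation makes the recursion terminate, but as written your argument does not justify well-foundedness.
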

Here, ``isometric embedding'' means that $\phi$ is an injective linear
map which preserves the Euclidean inner product. Likewise, an
isometry of $\cc^n$ is a linear automorphism which respects the inner product
(i.e., $F$ is represented in the standard basis by an orthogonal matrix).
Witt's theorem applies not only to $\cc^n$ but to any nondegenerate quadratic
space. For a proof see e.g.
\cite[Theorem~42:17]{omeara} or \cite[Theorem~5.3 of Chapter~1]{scharlau}
or Corollary~7.4 in the lecture notes~\cite{clark}.
\begin{corollary} \label{wittcor}
  Let $v_1,\ldots,v_k$ be an orthornormal family of vectors of $\cc^n$ (i.e.,
  $\langle v_i,v_j \rangle = \delta_{ij}$). This family can be extended to an
  orthonormal basis $v_1,\ldots,v_n$ of $\cc^n$.
\end{corollary}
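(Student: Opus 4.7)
The plan is to apply Witt's extension theorem (Theorem~\ref{witt}) after reformulating the problem as the extension of a suitable isometric embedding. Let $U=\mathrm{span}(v_1,\ldots,v_k)$. First I would check that $(v_1,\ldots,v_k)$ is linearly independent: any relation $\sum_i \alpha_i v_i = 0$ gives, upon taking the inner product with $v_j$, $\alpha_j = 0$, using $\langle v_i,v_j\rangle = \delta_{ij}$. So $\dim U = k$.

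Next I would define $\phi : U \to \cc^n$ by $\phi(v_i) = e_i$ for $1 \leq i \leq k$, where $(e_1,\ldots,e_n)$ is the standard basis of $\cc^n$. This is a well-defined linear map, and since $\langle e_i, e_j\rangle = \delta_{ij} = \langle v_i, v_j\rangle$, it preserves the Euclidean inner product on basis vectors and hence on all of $U$ by bilinearity. It is injective because $\phi$ maps a basis of $U$ to linearly independent vectors. Thus $\phi$ is an isometric embedding in the sense required by Theorem~\ref{witt}.

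By Witt's extension theorem, $\phi$ extends to an isometry $F$ of $\cc^n$, which is represented by an orthogonal matrix in the standard basis; in particular $F^{-1}$ is also an isometry. Set $v_{k+j} := F^{-1}(e_{k+j})$ for $j = 1,\ldots,n-k$. Since $F(v_i) = e_i$ for $i \leq k$, we also have $v_i = F^{-1}(e_i)$ for $i \leq k$, so $(v_1,\ldots,v_n) = (F^{-1}(e_1),\ldots,F^{-1}(e_n))$. As $F^{-1}$ is an isometry and $(e_1,\ldots,e_n)$ is orthonormal, the image family is orthonormal, and it is a basis because $F^{-1}$ is a linear automorphism. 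This produces the desired extension.

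There is no real obstacle here: the entire content is in Theorem~\ref{witt}, and the argument is the standard reduction of the extension problem to the extension of an isometric embedding. The only points requiring a little care are the linear independence of $(v_1,\ldots,v_k)$ (which uses the nondegeneracy of the inner product on $U$, guaranteed by orthonormality) and the observation that $F^{-1}$, being an isometry, preserves the standard orthonormal basis's orthonormality when pulled back.
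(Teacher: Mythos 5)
Your proof is correct and takes essentially the same approach as the paper: it reduces the statement to Witt's extension theorem by constructing an isometric embedding between $\mathrm{span}(v_1,\ldots,v_k)$ and $\mathrm{span}(e_1,\ldots,e_k)$, then extending it to a global isometry. The only cosmetic difference is that you define $\phi$ sending $v_i\mapsto e_i$ and then pull back the standard basis along $F^{-1}$, whereas the paper sends $e_i\mapsto v_i$ and pushes forward along $F$; these are the same argument.
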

\begin{proof}
  Let $(e_1,\ldots,e_n)$ be the standard basis of $\cc^n$ and let $U$ be
  the subspace spanned by $e_1,\ldots,e_k$. The family $v_1,\ldots,v_k$
  is linearly independent since it is orthonormal. As a result, there is a
  (unique) isometric embedding $\phi:U \rightarrow \cc^n$ such that
  $\phi(e_i)=u_i$ for $i=1,\ldots,k$. Let $F$ be the isometry of $\cc^n$
  provided by Theorem~\ref{witt}. The desired orthonormal basis is
  $v_1=F(e_1),\ldots,v_n=F(e_n)$.
\end{proof}
Here is another useful consequence of Witt's theorem:
\begin{corollary} \label{orthosupplement}
  Let $(u_1,\ldots,u_k)$ be a tuple of linearly independent pairwise
  orthogonal vectors of $\cc^n$.
  Assume that $(u_1,\ldots,u_r)$ are the isotropic vectors in this list,
  and denote by $U$ the subspace that they span.
  
The ``isotropic supplement'' to $U$ provided by Theorem~\ref{supplement}
can be chosen to be orthogonal to the subspace $V$
spanned by $(u_{r+1},\ldots,u_k)$.
\end{corollary}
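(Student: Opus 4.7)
The plan is to strengthen the application of Theorem~\ref{supplement} by working inside the orthogonal complement $V^\perp$ rather than inside all of $\cc^n$. Since $V^\perp$ will turn out to be itself a nondegenerate quadratic space containing $U$, any isotropic supplement produced there will automatically be orthogonal to $V$.

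Concretely, I would first check that $V$ is a nondegenerate subspace of $\cc^n$: the Gram matrix of the basis $(u_{r+1},\ldots,u_k)$ is diagonal by pairwise orthogonality, with diagonal entries $\langle u_i,u_i\rangle \neq 0$ since $u_{r+1},\ldots,u_k$ are non-isotropic, so this Gram matrix is invertible. By the standard orthogonal decomposition theorem for nondegenerate quadratic subspaces (see e.g.\ the same lecture notes~\cite{clark} already cited for Theorem~\ref{supplement}), we then have $\cc^n = V \oplus V^\perp$ with $V^\perp$ itself nondegenerate. Next, observe that $U \subseteq V^\perp$: each $u_i$ with $i \leq r$ is orthogonal to each $u_j$ with $j > r$ by hypothesis, so $U$ is orthogonal to $V$, which places $U$ inside $V^\perp$.

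At this point $U$ is a totally isotropic subspace of the nondegenerate quadratic space $V^\perp$, so Theorem~\ref{supplement}, applied inside $V^\perp$ rather than inside $\cc^n$, furnishes a totally isotropic subspace $U' \subseteq V^\perp$, disjoint from $U$, with a basis $(u'_1,\ldots,u'_r)$ satisfying $\langle u_i, u'_j\rangle = \delta_{ij}$. Since $U' \subseteq V^\perp$, the subspace $U'$ is orthogonal to $V$, which is exactly the refinement claimed by the corollary. I do not anticipate any real obstacle: the only step needing a moment of care is verifying that $V$ is nondegenerate (immediate from its diagonal Gram matrix), which is what allows Theorem~\ref{supplement} to be re-invoked in the smaller ambient space $V^\perp$. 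The edge cases $r=0$ (where $U=\{0\}$ and there is nothing to prove) and $r=k$ (where $V=\{0\}$ and the conclusion collapses to Theorem~\ref{supplement} itself) are handled trivially.
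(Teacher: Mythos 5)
Your proof is correct, and it takes a genuinely different route from the paper's. The paper proves the corollary by constructing an explicit \emph{model case}: it sets $w_j = e_{2j-1} + i e_{2j}$ for $j \leq r$ and $w_j = e_{r+j}$ for $r+1 \leq j \leq k$, observes that in this model the isotropic supplement can be written down directly as the span of the $(e_{2j-1} - i e_{2j})/2$, and then uses Witt's extension theorem (Theorem~\ref{witt}) to transport an isometry $W \to U \oplus V$ sending $w_j \mapsto c_j u_j$ to an isometry $F$ of all of $\cc^n$, finally taking $U' = F(\mathrm{span}\,w'_j)$. Your argument sidesteps Witt's theorem entirely: you observe that $V$ has invertible (diagonal) Gram matrix, hence is nondegenerate, invoke the orthogonal splitting $\cc^n = V \oplus V^\perp$ with $V^\perp$ nondegenerate, check that $U \subseteq V^\perp$ since $U \perp V$, and then re-apply Theorem~\ref{supplement} inside the smaller nondegenerate space $V^\perp$, so that orthogonality of $U'$ to $V$ comes for free. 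Both approaches work; yours is arguably cleaner and more conceptual, replacing an explicit coordinate construction plus Witt extension by the standard ``restrict to the orthogonal complement of a nondegenerate part'' maneuver, at the cost of needing the (also standard, and available in the same source~\cite{clark}) nondegeneracy of $V^\perp$. The paper's route has the mild advantage of re-using Witt's theorem, which is already imported and used elsewhere (e.g.\ Corollary~\ref{wittcor}), so the overall footprint of quoted background is the same either way.
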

\begin{proof}
  Let $(e_1,\ldots,e_n)$ be the standard basis of $\cc^n$ and
  consider the vectors $w_1,\ldots,w_k$ defined as follows:
  $w_j=e_{2j-1}+ie_{2j}$ for $j \leq r$ and $w_j=e_{r+j}$
  for $r+1 \leq j \leq k$.
  These vectors are well defined since $k+r \leq n$
  according to Corollary~\ref{total}.
  We denote by $W$ the subspace that they span.
  
  The conclusion of Corollary~\ref{orthosupplement} is clear in the special case
  where $(u_1,\ldots,u_k)=(w_1,\ldots,w_k)$.
    Indeed, we can take $U'$  to be the space spanned
  by the vectors $w'_j=(e_{2j-1}-ie_{2j})/2$ ($1 \leq j \leq r$).
  We will reduce the general case to this one thanks to Witt's
  extension theorem.

  Consider then a linear map $\phi:W \rightarrow U \oplus V$ such that $\phi(w_j)=c_ju_j$ for $j \leq k$. Choose nonzero constants $c_j$ so that
  $\langle \phi(w_j),\phi(w_j) \rangle = \langle w_j, w_j \rangle$
  for all $j$ (we may, and will, take $c_j=1$ for $j \leq r$).
  This map is designed to preserve the inner product on $W$, and it is
  injective since the $u_j$ are linearly independent.
  It can thefore be extended to an isometry $F$ of $\cc^n$
  by Theorem~\ref{witt}. Then we take $U'$  to be the space spanned
  by the vectors $u'_j=F(w'_j)$.
\end{proof}

\section{Orthogonal Waring decomposition} \label{sec:waring}

Orthogonal Waring decomposition has   been studied in particular
in~\cite{robeva16,boralevi17}
where it is called {\em orthogonal decomposition of symmetric tensors}.
Theorem~\ref{realth} below provides an alternative and elementary treatment
for the case
of order 3 tensors.
Orthogonal decompositions are defined in~\cite{robeva16,boralevi17}
in the language of tensors rather than in the 
language of polynomials used in Definition~\ref{waringdef}.
The two definitions are indeed equivalent:
\begin{proposition} \label{symdefequiv}
  Let $f \in \rr[x_1,\ldots,x_n]_d$ be a homogeneous polynomial of degree $d$
  and let $S$ be the corresponding symmetric tensor of order $d$.
  The two following properties are equivalent:
  \begin{itemize}
  \item[(i)] $f$ admits an orthogonal Waring decomposition.
  \item[(ii)] $S$ is ``symmetrically odeco''~\cite{boralevi17},
    i.e., can be written as
    $$\sum_{i=1}^k \pm v_i^{\otimes d}$$
    where $v_1,\ldots,v_k$ are nonzero, pairwise orthogonal vectors
    in $\rr^n$.
  \end{itemize}
\end{proposition}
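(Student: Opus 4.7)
The plan is to prove both implications by a direct change of variables, passing between the orthogonal matrix $A$ in Definition~\ref{waringdef} and the orthogonal family $(v_i)$ in the symmetrically odeco form.

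For $(i) \Rightarrow (ii)$, I would expand $f(x) = g(Ax)$ using the rows $a_1, \ldots, a_n$ of~$A$. Since $(Ax)_i = \langle a_i, x \rangle$, the decomposition becomes
$$f(x) = \sum_{i=1}^n \alpha_i \langle a_i, x \rangle^d, \qquad \text{i.e.,} \qquad S = \sum_{i=1}^n \alpha_i\, a_i^{\otimes d}.$$
Because $A$ is orthogonal, the rows $a_1,\ldots,a_n$ are orthonormal, hence pairwise orthogonal. To convert $\alpha_i a_i^{\otimes d}$ into the required $\pm v_i^{\otimes d}$ form, I would set $v_i = |\alpha_i|^{1/d}\, a_i$ whenever $\alpha_i \neq 0$ and drop the indices where $\alpha_i = 0$. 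Then $\alpha_i a_i^{\otimes d} = \operatorname{sgn}(\alpha_i)\, v_i^{\otimes d}$, and the surviving $v_i$ are nonzero and inherit pairwise orthogonality from the $a_i$.

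For $(ii) \Rightarrow (i)$, I would reverse this construction. Starting from $S = \sum_{i=1}^k \varepsilon_i\, v_i^{\otimes d}$ with $\varepsilon_i \in \{\pm 1\}$ and $v_1,\ldots,v_k$ nonzero and pairwise orthogonal, set $a_i = v_i / \|v_i\|$ and $\alpha_i = \varepsilon_i \|v_i\|^d$, so that $S = \sum_{i=1}^k \alpha_i\, a_i^{\otimes d}$ with $a_1, \ldots, a_k$ orthonormal. I would then extend this orthonormal family to an orthonormal basis $a_1, \ldots, a_n$ of $\rr^n$ (this is elementary over $\rr$ by Gram--Schmidt; note that we do \emph{not} need Witt's theorem here, since we are in the real case) and define $\alpha_i = 0$ for $i > k$. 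Letting $A$ be the matrix whose rows are the $a_i$ makes $A$ orthogonal, and
$$f(x) = \sum_{i=1}^n \alpha_i \langle a_i, x \rangle^d = g(Ax)$$
for $g(x_1, \ldots, x_n) = \sum_{i=1}^n \alpha_i x_i^d$, giving an orthogonal Waring decomposition.

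There is no serious obstacle over $\rr$; the only point requiring a little care is the parity of $d$. For $d$ odd, any sign in front of $a_i^{\otimes d}$ can be absorbed into the vector, so one could even take $\varepsilon_i = +1$. For $d$ even, $v_i^{\otimes d}$ is invariant under negation of $v_i$, so negative coefficients $\alpha_i$ must be recorded as a genuine $\pm$ sign in front of $v_i^{\otimes d}$; writing $\alpha_i = \varepsilon_i |\alpha_i|$ and using the real $d$-th root $|\alpha_i|^{1/d}$ handles this cleanly in both directions.
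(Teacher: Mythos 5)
Your proof is correct and follows essentially the same route as the paper's: expand $f(x)=g(Ax)$ along the rows of the orthogonal matrix $A$ for one direction, and normalize the $v_i$ and extend to an orthonormal basis of $\rr^n$ for the other. The extra care you take with the sign bookkeeping depending on the parity of $d$ is a fine addition but does not change the argument.
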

\begin{proof}
  Suppose that $f(x)=g(Ax)$ where $A$ is an orthogonal matrix and $g$ is
  as in~(\ref{powersum}).
  By definition,
  $f(x)=\sum_{i=1}^n a_i \langle v_i, x \rangle^d$
  where the $v_i$ are the rows of $A$.
  Therefore we have $S=\sum_{i=1}^n a_i v_i^{\otimes d}.$
  We obtain a decomposition of the form (ii) by dropping the terms
  with $a_i=0$, and dividing the remaining $v_i$ by $|a_i|^{1/d}$.

  Conversely, if $S$ is symmetrically odeco we can extend $v_1,\ldots,v_k$
  to an orthogonal basis $v_1,\ldots,v_n$ of $\rr^n$ and we can normalize
  these vectors to obtain an orthonormal basis $w_1,\ldots,w_n$.
  This yields a decomposition of the form
  $S=\sum_{i=1}^n a_i w_i^{\otimes d},$ and $f$ admits the orthogonal Waring decomposition $f(x)=\sum_{i=1}^n a_i \langle w_i, x \rangle^d.$
\end{proof}
Note that for odd $d$, the $\pm$ signs can be dropped  from (ii).

The above equivalence is
very straightforward but we point out that it fails over the field of complex
numbers, namely, it is no longer the case that (ii) implies (i).
This is due to the fact that some vectors $v_i \in \cc^n$ could be isotropic
(in the sense of Definition~\ref{def:isotropic})
and such vectors cannot be normalized:
consider for instance the polynomial $f=(x_1+ix_2)^3$ of Example~\ref{borderex}
and the corresponding tensor $S=(1,i)^{\otimes 3}$.
As a result, over $\cc$ we no longer have a single notion of
``symmetric orthogonal  decomposition.''
In Section~\ref{csection} we propose a natural version of (ii) for the
field of complex numbers, which we denote $\SODECO_n(\cc)$.
\footnote{The notation $\SODECO$ stands for ``symetrically odeco.''}
We investigate the relationship of this class of tensors with $\OW_n(\cc)$
in Section~\ref{csection} and with $\overline{\OW_n(\cc)}$
in Section~\ref{sclosure}. As one of our main results we will show that:
\begin{theorem} \label{firstinclosure}
For every $n \geq 1$ we have $$\OW_n(\cc) \subseteq \SODECO_n(\cc) \subseteq \overline{\OW_n(\cc)}.$$
  These two inclusions are strict for every $n \geq 2$.
\end{theorem}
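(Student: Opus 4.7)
The plan is to treat the four claims in turn, with the second strict inclusion being the main obstacle.

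For $\OW_n(\cc) \subseteq \SODECO_n(\cc)$, starting from $f(x) = g(Ax)$ with $g = \sum_{i=1}^n \alpha_i y_i^3$ and $A$ orthogonal, I would rewrite $f = \sum_i \alpha_i \langle a_i, x\rangle^3$ where $a_i$ is the $i$-th row of $A$, absorb each $\alpha_i^{1/3}$ (available in $\cc$) into $a_i$, and drop zero terms to obtain a decomposition $f = \sum_i w_i^{\otimes 3}$ with nonzero pairwise orthogonal $w_i$.

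For $\SODECO_n(\cc) \subseteq \overline{\OW_n(\cc)}$, given $T = \sum_{i=1}^k v_i^{\otimes 3}$ with pairwise orthogonal nonzero $v_i$, the only obstacle is that isotropic $v_i$ cannot be normalized to unit length. I would perturb them using Corollary~\ref{orthosupplement}: assuming $v_1, \ldots, v_r$ are the isotropic members of the decomposition, I choose an isotropic supplement $(u'_1, \ldots, u'_r)$ of $\mathrm{span}(v_1, \ldots, v_r)$ orthogonal to $\mathrm{span}(v_{r+1}, \ldots, v_k)$ and satisfying $\langle v_i, u'_j\rangle = \delta_{ij}$. Setting $v_i(\epsilon) = v_i + \epsilon u'_i$ for $i \leq r$, a short computation yields $\langle v_i(\epsilon), v_i(\epsilon)\rangle = 2\epsilon$ while preserving all other pairwise orthogonalities. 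The perturbed family is thus pairwise orthogonal and entirely non-isotropic, so by Corollary~\ref{wittcor} it can be rescaled to an orthonormal family and extended to an orthonormal basis of $\cc^n$; this yields an orthogonal Waring decomposition of $T(\epsilon) = \sum_{i \leq r} v_i(\epsilon)^{\otimes 3} + \sum_{i > r} v_i^{\otimes 3}$, and $T(\epsilon) \to T$ as $\epsilon \to 0$.

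For strictness of the first inclusion when $n \geq 2$, I would take $T = v^{\otimes 3}$ with $v = (1, i, 0, \ldots, 0)$. This lies trivially in $\SODECO_n(\cc)$, but its slices are scalar multiples of the rank-one nilpotent matrix $vv^T$ and hence are not diagonalizable, so by the characterization of $\OW_n(\cc)$ recalled in Section~\ref{csection} (slices commute and are diagonalizable) $T$ lies outside $\OW_n(\cc)$.

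The hard part will be the strict inclusion $\SODECO_n(\cc) \subsetneq \overline{\OW_n(\cc)}$. I would propose $g = (x_1 - ix_2)(x_1 + ix_2)^2 \in \cc[x_1, x_2] \subseteq \cc[x_1, \ldots, x_n]$ as the separating example. Membership in $\overline{\OW_n(\cc)}$ would follow from the explicit limit $g = \lim_{u \to 0} (6u)^{-1}\bigl[\langle v(u), x\rangle^3 - \langle w(u), x\rangle^3\bigr]$ with $v(u) = (1+u,\, i(1-u))$ and $w(u) = (1-u,\, i(1+u))$; one checks that $\langle v(u), w(u)\rangle = 0$ with both vectors non-isotropic for $u \neq 0$, so the approximant sits in $\OW_2(\cc) \subseteq \OW_n(\cc)$. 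To establish $g \notin \SODECO_n(\cc)$ I would first handle $n = 2$ by computing the two slices $A_1, A_2$ of $g$ and observing that each has the form $A_k = \lambda_k I + c_k uu^T$ with $u = (1, i)$ and $\lambda_k, c_k \neq 0$; in particular neither slice has $0$ as an eigenvalue, so a hypothetical decomposition $g = \sum v_i^{\otimes 3}$ admits no isotropic $v_i$ (those must lie in the common kernel). For non-isotropic $v_j$, pairwise orthogonality gives $A_k v_j = v_{j,k} \|v_j\|^2 v_j$, so $v_j$ is a joint eigenvector of $A_1, A_2$; but the common eigenspace equals $\cc \cdot u$, which is isotropic, producing a contradiction. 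For $n > 2$, the vanishing of $T_{ijk}$ whenever some index exceeds $2$ translates into $\sum_l v^{(l)}_k v^{(l)}(v^{(l)})^T = 0$ for each $k > 2$; linear independence of the outer products $v^{(l)}(v^{(l)})^T$, which follows from pairwise orthogonality via the isotropic supplements of Corollary~\ref{orthosupplement}, forces $v^{(l)}_k = 0$ for all $k > 2$, reducing to the $n = 2$ situation.
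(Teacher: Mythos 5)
Your proof is correct, and for three of the four claims (both inclusions and the strictness of the first) it coincides with the paper's argument: the first inclusion and its strictness are the paper's Corollary~\ref{owinsodeco}, and the perturbation of isotropic vectors via isotropic supplements is exactly Lemma~\ref{approx}. Where you diverge is in the witness for $\SODECO_n(\cc) \subsetneq \overline{\OW_n(\cc)}$. The paper takes $x_2(x_1+ix_2)^2$ and proves non-membership in $\SODECO_n(\cc)$ through a Hessian-determinant argument (Lemma~\ref{lem:2cubes} and Proposition~\ref{prop:x_1x_2^2}), which yields a stronger statement: no polynomial of the form $l_1 l_2^2$ with $l_1,l_2$ linearly independent is a sum of cubes of linearly independent linear forms, so pairwise orthogonality is not even needed. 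You instead take $(x_1-ix_2)(x_1+ix_2)^2$ and argue via the eigenstructure of its slices: both slices $\frac{2}{3}I + \frac{1}{3}uu^T$ and $\frac{2i}{3}I - \frac{i}{3}uu^T$ (with $u=(1,i)$) are invertible, which rules out isotropic summands (they would land in the kernel), while every non-isotropic summand must be a joint eigenvector, forcing it into the unique eigenline $\cc u$, which is isotropic. This is a concrete, checkable computation but depends on the particular polynomial, whereas the paper's Hessian argument is geometry-free and applies uniformly. Note that your polynomial is also of the form $l_1 l_2^2$ with independent $l_1,l_2$, so you could have simply invoked Proposition~\ref{prop:x_1x_2^2}; conversely, your eigenvector argument also works verbatim for the paper's polynomial, whose slices are likewise invertible with $\cc(1,i)$ as the sole eigenline. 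Your reduction from $n>2$ to $n=2$ via linear independence of the rank-one products $v^{(l)}(v^{(l)})^T$ is sound and parallels the dimension-reduction step in the paper's Proposition~\ref{prop:x_1x_2^2}. Finally, your explicit family $f_u = (6u)^{-1}[\langle v(u),x\rangle^3 - \langle w(u),x\rangle^3]$ with $v(u)=(1+u,\,i(1-u))$, $w(u)=(1-u,\,i(1+u))$ is a cleaner and more symmetric approximating sequence than the one in Example~\ref{bordersodeco}.
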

As we will see in Section~\ref{ordi}, the situation for ordinary tensors
is similar: we have a single notion of ``orthogonal tensor decomposition''
over $\rr$ but not over $\cc$.

\subsection{Orthogonal Waring decomposition over the reals}
\label{realwaring}

\begin{theorem} \label{realth}
  A real symmetric tensor of order 3
  admits an orthogonal Waring decomposition if and only if its slices pairwise
  commute.
    In particular, the set of symmetric tensors of order 3 and size $n$
    that admit an orthogonal Waring
decomposition is the zero set of a system of $n^2(n-1)^2/4$ polynomial equations of degree 2 in $n+2 \choose 3$ variables.
\end{theorem}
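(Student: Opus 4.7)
The plan is to prove both implications using the slice formula~(\ref{simslice}) together with Theorem~\ref{th:simdiag}, and then count equations.

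For the easy direction, suppose $f \in \OW_n(\rr)$. By~(\ref{simslice}) each slice is of the form $S_k = A^T D_k A$ with $A$ orthogonal and $D_k$ diagonal. Then
\[
S_k S_l = A^T D_k (A A^T) D_l A = A^T D_k D_l A = A^T D_l D_k A = S_l S_k,
\]
so the slices pairwise commute. The main point is to prove the converse. Suppose the real symmetric slices $S_1,\ldots,S_n$ pairwise commute. A commuting family of diagonalizable matrices is simultaneously diagonalizable (a standard fact: one refines a common eigenspace decomposition of $S_1$ by diagonalizing the remaining $S_k$ on each eigenspace), and Theorem~\ref{th:simdiag} upgrades this to an \emph{orthogonal} simultaneous diagonalization. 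Let $P$ be an orthogonal matrix with $P^T S_k P$ diagonal for every $k$, and set $g(y) = f(Py)$. By Fact~\ref{hessianfact}, $H_g(y) = P^T H_f(Py) P$; expanding $H_f(x)=\sum_k x_k A_k$ with $A_k=6S_k$ gives
\[
H_g(y) = \sum_j y_j\, P^T\Bigl(\sum_k P_{kj} A_k\Bigr) P = 6\sum_j y_j \sum_k P_{kj}\, (P^T S_k P),
\]
a linear combination of diagonal matrices, hence diagonal as a matrix of linear forms in~$y$. By Lemma~\ref{diaghessian}, $g$ has the form $\sum_i \beta_i y_i^3$. Since $P$ is orthogonal, $f(x) = g(P^T x)$ is an orthogonal Waring decomposition as in Definition~\ref{waringdef}.

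For the algebraic description, observe that the coefficients of the slices $S_k$ are (up to a factor 6) the coefficients of $f$, so each entry of the commutator $[S_k, S_l] = S_k S_l - S_l S_k$ is a polynomial of degree 2 in the $\binom{n+2}{3}$ coefficients of $f$. The commutator of two symmetric matrices is skew-symmetric, so for each unordered pair $\{k,l\}$ with $k<l$ the condition $[S_k,S_l]=0$ contributes at most $n(n-1)/2$ independent equations (the strict upper triangular entries). With $\binom{n}{2}=n(n-1)/2$ such pairs this yields exactly $\tfrac{n(n-1)}{2}\cdot\tfrac{n(n-1)}{2} = n^2(n-1)^2/4$ equations of degree~$2$.

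I expect the main conceptual step to be the reconstruction of $g$ from the simultaneous diagonalization: one must translate ``the slices of $f$ are simultaneously orthogonally diagonalizable'' into ``$f$ becomes a diagonal cubic form after an orthogonal change of variables,'' and the cleanest route is the Hessian calculation above, relying on Lemma~\ref{diaghessian} to conclude that a cubic with diagonal Hessian is a sum of pure cubes. The counting argument is then essentially bookkeeping, with the only subtlety being the remark that skew-symmetry halves the apparent number of independent commutator entries.
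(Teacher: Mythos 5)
Your proof is correct and follows essentially the same route as the paper: use formula~(\ref{simslice}) for the forward implication, and for the converse combine the standard fact that commuting diagonalizable matrices are simultaneously diagonalizable with Theorem~\ref{th:simdiag} to get an orthogonal diagonalizer, then transfer this to the cubic form via the Hessian (Fact~\ref{hessianfact} and Lemma~\ref{diaghessian}). Your equation count is also the same as the paper's, merely phrased in terms of skew-symmetry of the commutator rather than symmetry of the product $S_kS_l$, which are equivalent ways of seeing that each pair contributes $n(n-1)/2$ equations.
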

\begin{proof}
  Let $f \in \rr[x_1,\ldots,x_n]_3$ be the degree 3 form associated to
  the symmetric tensor, and let $S_1,\ldots,S_n$ be the slices.
  We first consider the case where $f$  admits the orthogonal Waring decomposition $f(x)=g(Ax)$.
  Recall from~(\ref{simslice}) that the slices satisfy $S_k = A^TD_kA$
  where the matrices $D_k$ are diagonal. Since $A^T=A^{-1}$ the slices are
  simultaneously diagonalizable and they must therefore commute.

  For the converse, assume now that the slices commute.
  Recall that the matrices $A_1,\ldots,A_n$ in~(\ref{partial3})
  satisfy $A_k = 6S_k$.
  It is a well know fact of linear algebra
that a set of matrices is simultaneously diagonalizable iff these matrices commute  and each
matrix is diagonalizable (\cite{horn13}, Theorem 1.3.21). The latter assumption
is satisfied since the $A_i$ are real symmetric matrices.
By Theorem~\ref{th:simdiag} there are diagonal matrices $D_1,\ldots,D_n$  and an orthogonal matrix $A$ 
such that $A_i= A^TD_iA$. Therefore $H_f(x)=x_1A_1+\cdots+x_nA_n =  A^T D(x) A$ where 
$D(x)$ is a diagonal matrix whose entries are linear forms in $x_1,\ldots,x_n$.
Consider now the polynomial $g(x)=f(A^{-1}x)=f(A^Tx)$. 
By Fact \ref{hessian} we have 
$$H_g(x)=AH_f(Ax)A^T=D(Ax)$$
and $H_g$ is a diagonal matrix.
By Lemma \ref{diaghessian}, $g$ is of form (\ref{powersum}) and $f(x)=g(Ax)$ admits an orthogonal Waring decomposition.

The resulting polynomial system contains $n^2(n-1)^2/4$ equations because we have to express the commutativity of $n(n-1)/2$ pairs of matrices.  Each 
commutativity condition yields $n(n-1)/2$ equations
(Indeed, two symmetric matrices commute iff their product
is symmetric as well; it therefore suffices to express the equality of each
upper triangular entry of the product with the corresponding lower triangular
entry).
\end{proof}
In the remainder of Section~\ref{realwaring} we elucidate the connection
between the linear algebraic approach leading to Theorem~\ref{realth}
and the approach from~\cite{boralevi17}.
Given a 3-dimensional symmetric tensor $T$,
the authors of that paper define on $V=\rr^n$ a certain bilinear map
$V \times V \rightarrow V$, $(u,v) \mapsto u.v$.
This map is defined on elements of the standard basis $e_1,\ldots,e_n$ of $\rr^n$
by:
\begin{equation} \label{law}
  e_i.e_j = \sum_{l=1}^n T_{ijl}e_l.
  \end{equation}
One can then extend this map to the whole of $V \times V$ by bilinearity
(in~\cite{boralevi17} they actually
give an equivalent coordinate free definition of this map).
Then they establish a connection between the associativity of
this map and symmetric odeco decompostions:
\begin{theorem} \label{boralevi-sodeco}
The tensor $T$ is symmetrically odeco if and only if $(V,.)$ is associative.
\end{theorem}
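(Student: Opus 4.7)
My plan is to reduce associativity of $(V,\cdot)$ to the pairwise commutativity of the slices $S_1,\ldots,S_n$ of $T$, and then invoke Theorem~\ref{realth}. The algebra is automatically commutative, for since $T$ is symmetric, $e_i\cdot e_j=\sum_l T_{ijl}e_l=\sum_l T_{jil}e_l=e_j\cdot e_i$; so only associativity is the genuine condition.

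First, I would unfold associativity on basis vectors using the product law~(\ref{law}):
\[
(e_i\cdot e_j)\cdot e_k=\sum_{l,m} T_{ijl}T_{lkm}\,e_m,\qquad e_i\cdot(e_j\cdot e_k)=\sum_{l,m} T_{jkl}T_{ilm}\,e_m.
\]
So $(V,\cdot)$ is associative iff $\sum_l T_{ijl}T_{lkm}=\sum_l T_{jkl}T_{ilm}$ for all $i,j,k,m$.

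Next, I would identify both sides as entries of products of slices. Writing $T_{ijl}=(S_i)_{jl}$ and using the full symmetry of $T$ (so that $T_{lkm}=T_{klm}=(S_k)_{lm}$ and $T_{jkl}=(S_k)_{jl}$, $T_{ilm}=(S_i)_{lm}$), the preceding identity becomes
\[
(S_iS_k)_{jm}=(S_kS_i)_{jm}\quad\text{for all } i,j,k,m,
\]
i.e.\ $S_iS_k=S_kS_i$ for every $i,k$. Combining this with Theorem~\ref{realth}, which says that $T$ is symmetrically odeco iff its slices pairwise commute, the desired equivalence follows at once.

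The main obstacle is not analytical but purely notational: one must carefully exploit the threefold symmetry of $T$ to rewrite each cubic expression $\sum_l T_{\alpha\beta l}T_{\gamma\delta l\ldots}$ as a bona fide matrix product of two slices, and in particular to bring the summation index $l$ into the ``inner'' positions of both factors. Once that bookkeeping is set up correctly, the proof reduces to a one-line calculation followed by a direct appeal to Theorem~\ref{realth}.
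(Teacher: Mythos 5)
Your proposal is correct and matches the route the paper itself suggests: the paper states Theorem~\ref{boralevi-sodeco} as a result quoted from \cite{boralevi17}, but then notes that, in light of Theorem~\ref{realth}, it is equivalent to ``slices of $T$ commute iff $(V,\cdot)$ is associative,'' and proves the latter by exactly the index calculation you give (expanding $(e_i\cdot e_j)\cdot e_k$ and $e_i\cdot(e_j\cdot e_k)$, using full symmetry of $T$ to identify $\sum_l T_{ijl}T_{lkm}$ with $(S_iS_k)_{jm}$ and $\sum_l T_{jkl}T_{ilm}$ with $(S_kS_i)_{jm}$). So your argument is essentially the paper's intended alternative proof of the Boralevi et al.\ theorem, down to the same bookkeeping.
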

In light of Theorem~\ref{realth} and Proposition~\ref{waringtensor},
we can conclude from Theorem~\ref{boralevi-sodeco}
that $(V,.)$ is associative if and only if the slices of $T$ commute.
We now give an alternative  proof of this equivalence by a direct calculation.
In fact, we show that this equivalence holds for an arbitrary field
(note indeed that~(\ref{law}) makes sense for an arbitrary field and not just
for the field of real numbers).
\begin{theorem}
  Let $T$ be a symmetric tensor of order 3 and size $n$, with entries
  in an arbitrary field $K$. The slices of $T$ commute if and only if
  $(K^n,.)$ is associative.
\end{theorem}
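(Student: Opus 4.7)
The plan is to introduce the left-multiplication operator $L_a : K^n \to K^n$ defined by $L_a(v) = a \cdot v$, and to identify these operators with the slices of $T$. My first step would be to verify that $L_{e_i}$ equals the $i$-th slice $S_i$: since $L_{e_i} e_j = \sum_l T_{ijl} e_l$, the $(l,j)$-entry of $L_{e_i}$ is $T_{ijl}$, which by the full symmetry of $T$ equals $T_{lji} = (S_i)_{lj}$. By linearity, $L_a = \sum_i a_i S_i$ for every $a \in K^n$, so the slices pairwise commute if and only if the family $\{L_a : a \in K^n\}$ is pairwise commuting. I would also note immediately that $(K^n, \cdot)$ is itself a commutative algebra, since $T_{ijl} = T_{jil}$ forces $e_i \cdot e_j = e_j \cdot e_i$.

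Once this dictionary is in place, I would prove both implications by short chains of equalities. For the forward direction (associativity implies commutativity of slices), I would take an arbitrary $c \in K^n$ and write
\[
L_{e_i} L_{e_j} c = e_i \cdot (e_j \cdot c) = (e_i \cdot e_j) \cdot c = (e_j \cdot e_i) \cdot c = e_j \cdot (e_i \cdot c) = L_{e_j} L_{e_i} c,
\]
so that $S_i S_j = L_{e_i} L_{e_j} = L_{e_j} L_{e_i} = S_j S_i$. For the converse, assuming that $L_a L_b = L_b L_a$ for all $a, b$ (which by linearity is equivalent to commutativity of all slices), associativity follows from
\[
(a \cdot b) \cdot c = c \cdot (a \cdot b) = L_c L_a b = L_a L_c b = a \cdot (c \cdot b) = a \cdot (b \cdot c),
\]
where the outer equalities use the free commutativity of the algebra established above.

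I do not expect a significant obstacle here. The only place demanding attention is the index bookkeeping behind $L_{e_i} = S_i$, which genuinely uses the full symmetry of $T$ together with the sign convention for slices from Section~\ref{tensorbackground}; the rest is formal manipulation combining associativity with commutativity. Because only the ring axioms are used, the argument works verbatim over an arbitrary field $K$ of any characteristic.
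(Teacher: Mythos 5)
Your proposal is correct. Where the paper verifies associativity for basis triples by expanding both sides of $(e_i\cdot e_j)\cdot e_k = e_i\cdot(e_j\cdot e_k)$ in coordinates, obtaining the identity $\sum_l T_{ijl}T_{lkm} = \sum_l T_{jkl}T_{ilm}$, and then reading this off as $(T_iT_k)_{jm}=(T_kT_i)_{jm}$ using the full symmetry of $T$, you instead package the same information into the left-multiplication operators $L_a$ and exploit, as a separate and explicitly stated fact, that the algebra $(K^n,\cdot)$ is commutative. This lets both directions become short chains of operator identities, the converse being the chain $(a\cdot b)\cdot c = c\cdot(a\cdot b) = L_cL_a b = L_aL_c b = a\cdot(c\cdot b) = a\cdot(b\cdot c)$. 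The net content is the same --- both proofs hinge on the identification $L_{e_i}=S_i$, which is where the symmetry of $T$ enters --- but your version makes visible that the theorem is an instance of the general fact that a \emph{commutative} (not a priori associative) finite-dimensional algebra is associative if and only if its left-multiplication operators pairwise commute, whereas the paper's index computation keeps this structure implicit. Your index check $(L_{e_i})_{lj}=T_{ijl}=T_{lji}=(S_i)_{lj}$ is consistent with the paper's slice convention $(S_i)_{jl}=T_{ijl}$ from equation~(\ref{partial3}), and the reduction from ``all $S_iS_j=S_jS_i$'' to ``all $L_aL_b=L_bL_a$'' by bilinearity is sound, so there is no gap.
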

\begin{proof}
  Assume first that $(K^n,.)$ is associative. In particular, for vectors
  of the standard basis we have $(e_i . e_j) . e_k = e_i . (e_j .e_k)$.
  By~(\ref{law}), the left-hand side is equal to:
  $$\sum_l T_{ijl} (e_l.e_k) = \sum_l T_{ijl} \left(\sum_m T_{lkm} e_m\right) =
  \sum_m \left(\sum_l T_{ijl} T_{lkm}\right) e_m.$$
  A similar computation shows that
  $$e_i. (e_j.e_k) = \sum_m \left(\sum_l T_{jkl} T_{ilm}\right) e_m.$$
  Therefore we have
  $$\sum_l T_{ijl} T_{lkm} = \sum_l T_{jkl} T_{ilm}$$
  for every $m$. By symmetry of $T$, the left hand side is equal to
  $(T_i T_k)_{jm}$ where $T_1,\ldots,T_n$ denote the slices of $T$.
  As to the right-hand side, it is equal to $(T_k T_i)_{jm}$.
  Hence we have shown that $T_i T_k = T_k T_i$ for any $i,k$.

  Conversely, assume now that the slices $T_1,\ldots,T_n$ commute.
  The above computation shows that $(e_i . e_j) . e_k = e_i . (e_j .e_k)$, i.e.,
  associativity holds for basis vectors. The associativity of $(K^n,.)$
  then follows from bilinearity.
\end{proof}

\subsection{Orthogonal Waring decomposition over the complex numbers}

\label{csection}

Theorem \ref{realth} does not carry over directly to the field of complex numbers because complex symmetric matrices are not always diagonalizable:
consider for instance the matrices
$A=\begin{pmatrix}
  2i & 1\\
  1 & 0
\end{pmatrix}$
or $B=\begin{pmatrix}
  1 & i\\
  i & -1
\end{pmatrix}.$
The second matrix is not diagonalizable since $B^2=0$ but $B \neq 0$.
For the first one we have $(A-i\mathrm{Id})^2=0$ but $A \neq i \mathrm{Id}$.
\begin{theorem} \label{complexth}
A complex symmetric tensor of order 3
admits an orthogonal Waring decomposition if and only if its slices
are diagonalizable and pairwise commute.
 \end{theorem}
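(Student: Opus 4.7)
The plan is to essentially re-run the proof of Theorem~\ref{realth}, keeping track of where the real symmetric case used the automatic diagonalizability of symmetric matrices and inserting the diagonalizability hypothesis at exactly that spot. The key technical tool is already available: Theorem~\ref{th:simdiag} is stated for both $K=\rr$ and $K=\cc$, and for complex symmetric matrices it says that commuting \emph{diagonalizable} symmetric matrices can be simultaneously diagonalized by a complex orthogonal change of basis.

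For the forward direction, suppose $f(x)=g(Ax)$ with $A$ complex orthogonal and $g$ of the form~(\ref{powersum}). Formula~(\ref{simslice}) yields $S_k = A^T D_k A = A^{-1} D_k A$ since $A^T=A^{-1}$, so the slices $S_1,\ldots,S_n$ are simultaneously conjugate to diagonal matrices. Hence each $S_k$ is diagonalizable and the $S_k$ pairwise commute.

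For the converse, assume the slices $S_1,\ldots,S_n$ are diagonalizable and pairwise commute. The matrices $A_i=6S_i$ from~(\ref{partial3}) inherit these properties and are complex symmetric. By the standard linear algebra fact that a commuting family of diagonalizable matrices is simultaneously diagonalizable (\cite{horn13}, Theorem~1.3.21), and then by Theorem~\ref{th:simdiag} applied over $\cc$, there is a complex orthogonal matrix $A$ and diagonal matrices $D_1,\ldots,D_n$ with $A_i=A^T D_i A$. Therefore
\[
H_f(x)=x_1 A_1+\cdots+x_n A_n = A^T D(x) A,
\]
where $D(x)=\sum x_i D_i$ is a diagonal matrix of linear forms. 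Setting $g(x)=f(A^T x)$ and applying Fact~\ref{hessianfact} gives $H_g(x)=A H_f(A^T x) A^T = D(A^T x)$, which is diagonal. Lemma~\ref{diaghessian} then forces $g$ to have the form~(\ref{powersum}), and $f(x)=g(Ax)$ is the desired orthogonal Waring decomposition.

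The only real obstacle compared to the real case is ensuring that diagonalizability plus commutativity really yields a \emph{complex orthogonal} simultaneous diagonalizer, not merely an invertible one; this is precisely the content of Theorem~\ref{th:simdiag} over~$\cc$, which in turn relies on the fact (Theorem~4.4.27 of~\cite{horn13}) that a diagonalizable complex symmetric matrix admits a complex orthogonal diagonalizer. With that tool in hand, the argument is mechanical and no further complications arise.
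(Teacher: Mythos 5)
Your proof is correct and follows exactly the same route as the paper's: both directions use formula~(\ref{simslice}) and Fact~\ref{hessianfact} together with the complex case of Theorem~\ref{th:simdiag}, and the converse mirrors the proof of Theorem~\ref{realth} with the diagonalizability hypothesis inserted where the real symmetric case got it for free. The paper's proof is just more telegraphic, referring back to the real case instead of spelling out the Hessian computation as you do.
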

\begin{proof}
  Again we consider first the case where the symmetric tensor
  admits an orthogonal Waring decomposition.
The only difference with the real case is that we do need to show
that the 
slices are diagonalizable 
since this property does   not hold true for all complex symmetric matrices.
For the slices, this property follows from~(\ref{simslice}).

For the converse we note that since the slices commute and are now assumed to be diagonalizable, 
they are simultaneously diagonalizable as in the real case. We can therefore apply the complex case of Theorem~\ref{th:simdiag}
and conclude as in the proof of Theorem \ref{realth}.
\end{proof}
Recall that we denote by $\OW_n(\cc)$ the set of polynomials of $\cc[x_1,\ldots,x_n]_3$ that admit an orthogonal Waring decomposition.
As explained in Section~\ref{constructible},
Theorem~\ref{complexth} gives a description of $\OW_n(\cc)$ as a constructible
subset of $\cc[x_1,\ldots,x_n]_3$ (i.e. as a subset defined by a Boolean
combination of polynomial equalities).

We propose the following adaptation of the notion of a
{\em symmetrically  odeco} tensor
(\cite{boralevi17} and Proposition~\ref{symdefequiv}) 
from $K=\rr$ to $K=\cc$.
\begin{definition} \label{sodeco}
We denote by $\SODECO_n(\cc)$ the set of symmetric tensors of order 3 that
    can be written as $\sum_{i=1}^k  v_i^{\otimes 3}$
    where $v_1,\ldots,v_k$ are linearly independent pairwise orthogonal vectors
    in $\cc^n$.
    We use the same notation for the corresponding set of of degree 3
    homogenous polynomials in $\cc[x_1,\ldots,x_n]$.
\end{definition}
Over $\rr$ the linear independence would follow from the orthogonality
of the $v_i$ (compare with property (ii) in Proposition~\ref{symdefequiv}).
We need to add it explicitly in this definition
since some of the $v_i$ could be isotropic (a situation where pairwise orthogonality does not automatically imply linear independence).
Here is a characterization of $\OW_n(\cc)$ in the style
of Definition~\ref{sodeco}:
\begin{proposition} \label{snonisotropic}
  $OW_n(\cc)$ is equal to the set
  of homogeneous polynomials of degree 3 which admit a decomposition
  of the form
  $$f(x)=\sum_{j=1}^k  \langle u_j,x \rangle^3$$ for some $k \leq n$,
    where  $u_1,\ldots,u_k$
    are pairwise orthogonal non-isotropic vectors of $\cc^n$.
\end{proposition}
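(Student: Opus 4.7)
The plan is to prove each inclusion separately, using the orthogonal decomposition from Definition~\ref{waringdef} as the bridge. In both directions the key technical move is to transfer scalar coefficients in and out of the cubed linear forms via cube/square roots in $\cc$.

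For the forward inclusion ($\OW_n(\cc)$ contained in the set described), I would start from a decomposition $f(x)=g(Ax)$ with $A$ orthogonal and $g(y)=\sum_{i=1}^n\alpha_i y_i^3$. Writing $A$ in terms of its rows $a_1,\ldots,a_n$, one has $f(x)=\sum_{i=1}^n\alpha_i\langle a_i,x\rangle^3$, and orthogonality of $A$ gives $\langle a_i,a_j\rangle=\delta_{ij}$. I would then pick a complex cube root $\beta_i$ of $\alpha_i$ and set $u_i=\beta_i a_i$, so that $\alpha_i\langle a_i,x\rangle^3=\langle u_i,x\rangle^3$. The vectors $u_i$ are pairwise orthogonal (since scaling preserves orthogonality) and satisfy $\langle u_i,u_i\rangle=\beta_i^2\neq 0$ whenever $\alpha_i\neq 0$; so they are non-isotropic. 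Discarding indices with $\alpha_i=0$ leaves at most $n$ terms, yielding the claimed decomposition.

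For the reverse inclusion, I would start with $f(x)=\sum_{j=1}^k\langle u_j,x\rangle^3$ where the $u_j$ are pairwise orthogonal and non-isotropic. Since $\langle u_j,u_j\rangle\neq 0$, I can choose a complex square root $\beta_j$ of $\langle u_j,u_j\rangle$ and set $w_j=u_j/\beta_j$; these $w_j$ form an orthonormal family in $\cc^n$ in the sense $\langle w_i,w_j\rangle=\delta_{ij}$. The key nontrivial step is then to invoke Corollary~\ref{wittcor} (Witt's extension theorem) to extend $w_1,\ldots,w_k$ to an orthonormal basis $w_1,\ldots,w_n$ of $\cc^n$. Let $A$ be the matrix with rows $w_1,\ldots,w_n$; then $AA^T=\mathrm{Id}_n$, so $A$ is orthogonal. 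Setting $\alpha_j=\beta_j^3$ for $j\leq k$ and $\alpha_j=0$ for $j>k$, one has $\langle u_j,x\rangle^3=\alpha_j\langle w_j,x\rangle^3=\alpha_j(Ax)_j^3$, so $f(x)=g(Ax)$ with $g(y)=\sum_{j=1}^n\alpha_j y_j^3$, establishing $f\in\OW_n(\cc)$.

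The main obstacle is the reverse direction: without Witt's extension theorem, there is no reason an orthonormal family in the complex Euclidean sense extends to an orthonormal basis (the presence of isotropic vectors ruins naive Gram--Schmidt arguments). Here non-isotropy of the $u_j$ is exactly what allows the normalization step that turns the hypothesis into an orthonormal family, after which Corollary~\ref{wittcor} does the heavy lifting. Everything else is routine: cube roots exist in $\cc$, and the orthogonality of the row-matrix $A$ is immediate once the $w_j$ form an orthonormal basis.
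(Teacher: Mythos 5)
Your proof is correct and follows exactly the paper's approach: in the forward direction absorb the coefficients $\alpha_i$ into the rows of $A$ via cube roots (discarding zero terms), and in the reverse direction normalize the $u_j$ and invoke Corollary~\ref{wittcor} (Witt's extension theorem) to complete them to an orthonormal basis, which yields the orthogonal matrix $A$. The only difference is that you spell out the bookkeeping more explicitly than the paper, which simply cites the analogy with Proposition~\ref{symdefequiv}.
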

\begin{proof}
  It is very similar to the proof of Proposition~\ref{symdefequiv},
  and applies to homogeneous polynomials of any degree $d$.
  Suppose indeed that $f(x)=g(Ax)$ where $A$ is an orthogonal matrix and $g$ is
  as in~(\ref{powersum}).
  We saw that $f(x)=\sum_{j=1}^n a_j \langle u_j, x \rangle^d$
  where the $u_j$ are the rows of $A$. These vectors are indeed pairwise
  orthogonal and non-isotropic since they are the rows of an orthogonal matrix.
  We obtain the required decomposition by dropping the terms with $a_j=0$,
  and dividing the remaining $u_j$ by a $d$-th root of $a_j$.

  Conversely, assume that we have a decomposition
  $f(x)=\sum_{j=1}^k  \langle u_j,x \rangle^3$
    where  $u_1,\ldots,u_k$
    are pairwise orthogonal non-isotropic vectors of $\cc^n$.
    We can normalize these vectors and then extend $u_1,\ldots,u_k$ to
    an orthonormal basis of $\cc^n$ using Witt's extension theorem
    (Theorem~\ref{witt} and Corollary~\ref{wittcor}).
    This shows that $f(x)=g(Ax)$ where $A$ is an orthogonal matrix and $g$ is
  as in~(\ref{powersum}).
\end{proof}

\begin{corollary} \label{owinsodeco}
  We have $\OW_n(\cc) \subseteq \SODECO_n(\cc)$ for every $n \geq 1$.
This inclusion is strict for every $n \geq 2$.
\end{corollary}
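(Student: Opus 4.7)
The inclusion is essentially a repackaging of Proposition~\ref{snonisotropic}. Given $f \in \OW_n(\cc)$, that proposition supplies a decomposition $f(x) = \sum_{j=1}^k \langle u_j,x\rangle^3$ with the $u_j$ pairwise orthogonal and non-isotropic. The corresponding symmetric tensor is $\sum_{j=1}^k u_j^{\otimes 3}$, which is exactly the shape required by Definition~\ref{sodeco} provided the $u_j$ are linearly independent. I would verify this quickly via the Gram matrix: since $\langle u_i,u_j\rangle = 0$ for $i\neq j$ and $\langle u_j,u_j\rangle \neq 0$ for all $j$, the Gram matrix is a diagonal matrix with nonzero diagonal entries and hence invertible, which forces linear independence.

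For strictness when $n \geq 2$, the plan is to exhibit a single explicit witness, the same one flagged in Example~\ref{borderex}: take the isotropic vector $v = (1,i,0,\ldots,0) \in \cc^n$ and set $f(x) = \langle v,x\rangle^3 = (x_1+ix_2)^3$. The associated symmetric tensor is $v^{\otimes 3}$, which is a $\SODECO$ decomposition with $k=1$ (a single nonzero vector is trivially a linearly independent, pairwise orthogonal family), so $f \in \SODECO_n(\cc)$.

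It remains to argue that $f \notin \OW_n(\cc)$, and here I would invoke Theorem~\ref{complexth}. The slices of $v^{\otimes 3}$ are the matrices $v_k \cdot vv^T$ for $k=1,\ldots,n$, so up to scaling they all coincide with the $n \times n$ matrix $M$ whose upper-left $2\times 2$ block is $\bigl(\begin{smallmatrix} 1 & i \\ i & -1\end{smallmatrix}\bigr)$ and whose other entries vanish. Because $v$ is isotropic, $M^2 = \langle v,v\rangle \cdot v v^T = 0$, yet $M \neq 0$; therefore $M$ is not diagonalizable. By Theorem~\ref{complexth}, $f \notin \OW_n(\cc)$, establishing strict inclusion.

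No step here presents a real obstacle. The only point that needs a moment of care is the linear-independence check in the first part, and the key conceptual input for strictness is simply recognizing that isotropy of $v$ collapses the slice matrix into a nonzero nilpotent, which is precisely the obstruction ruled out by the diagonalizability clause of Theorem~\ref{complexth}.
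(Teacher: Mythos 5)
Your proposal is correct and follows essentially the same route as the paper: the inclusion is read off from Proposition~\ref{snonisotropic} (with the small, worthwhile Gram-matrix remark making explicit the linear independence that the paper leaves implicit), and strictness uses the same witness $(x_1+ix_2)^3$ with the same obstruction, a nonzero nilpotent slice that violates the diagonalizability clause of Theorem~\ref{complexth}.
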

\begin{proof}
  The inclusion is immediate from Definition~\ref{sodeco}
  and Proposition~\ref{snonisotropic}.
  In order to show that it is strict for $n=2$ consider the symmetric
  tensor $S=e^{\otimes 3}$ where $e=(1,i)$
  (or in an equivalent language,
  consider the polynomial $f(x_1,x_2)=(x_1+ix_2)^3$).
  The first slice of $S$ is the matrix $B$
  at the beginning of Section~\ref{csection},
  and it is not diagonalizable.
  Hence $S {\not \in} \OW_2(\cc)$ by Theorem \ref{complexth}, and it is clear
that $S \in \SODECO_2(\cc)$.

This example can be generalized to any $n \geq 2$ by adding ``dummy variables''
  $x_3,\ldots,x_n$. Namely, if we set $f(x_1,\ldots,x_n)=(x_1+ix_2)^3$,
  it is still the case that the matrix of the quadratic form
  $\partial f / \partial x_1$ is not diagonalizable.
  Hence $f {\not \in} \OW_n(\cc)$ by Theorem \ref{complexth}.
\end{proof}
In Section~\ref{sclosure} we will use the same example to show that  $\OW_n(\cc)$ is not closed for $n \geq 2$ (see Example~\ref{borderex} and Remark~\ref{rem:borderex}).

\subsection{Closure properties from below} \label{sclosure}

We will now study the closure $\overline{\OW_n(\cc)}$
of $\OW_n(\cc)$.
This study is justified by the fact that,
as we will see in Example~\ref{borderex}, $\OW_n(\cc)$
is not closed.
First, we show that $\OW_n(\cc)$ is ``large''
in the following sense.
\begin{proposition} \label{density}
  The set of first slices of symmetric tensors in $\OW_n(\cc)$
  is dense in the space of symmetric matrices of size $n$.
 \end{proposition}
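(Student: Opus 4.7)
The plan is to use formula~(\ref{simslice}): the first slice of the tensor associated to $f(x) = \sum_{i=1}^n \alpha_i \langle a_i, x \rangle^3$, where $a_i$ is the $i$-th row of an orthogonal matrix $A$, equals $S_1 = A^T D A$ with $D = \diag(\alpha_1 a_{11},\ldots,\alpha_n a_{n1})$. When the first column of $A$ has no zero entry, the assignment $\alpha_i := D_{ii}/a_{i1}$ realizes an arbitrary diagonal $D$; consequently, every symmetric matrix of the form $A^T D A$ with $A$ orthogonal, first column of $A$ nowhere vanishing, and $D$ diagonal, is the first slice of some tensor in $\OW_n(\cc)$. It therefore suffices to show that such matrices are dense in the space of complex symmetric matrices.

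For this density claim I would restrict to the dense open subset of symmetric matrices with $n$ pairwise distinct eigenvalues (dense by the proof of Lemma~\ref{diagsymdense}). On this open set each eigenspace is one-dimensional and, by Horn's theorem (invoked in the proof of Theorem~\ref{th:simdiag}), the normalized eigenvectors $v_1,\ldots,v_n$ (unique up to sign) exist and depend continuously on $M$. The condition ``the first coordinate of $v_i$ is nonzero for every $i$'' is therefore open, and since openness is preserved on a dense open subset, it is enough to exhibit a single witness matrix.

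The witness comes from a first-order perturbation of $M_0 = \diag(1,2,\ldots,n)$: set $M_\epsilon := M_0 + \epsilon E$ with $E$ symmetric and $E_{1i} \neq 0$ for $i \geq 2$. Standard perturbation theory for distinct eigenvalues yields $v_i(\epsilon) = e_i + \epsilon \sum_{j \neq i} \frac{E_{ji}}{\lambda_i - \lambda_j} e_j + O(\epsilon^2)$, whose first coordinate is of order $\epsilon$ and nonzero for $i \geq 2$, while $v_1(\epsilon)$ stays close to $e_1$ and thus retains first coordinate close to $1$. Hence for small enough $\epsilon > 0$ all first coordinates are nonzero, and $M_\epsilon$ is the desired witness.

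The main subtlety is the possible presence of zero entries in the first column of $A$, which is what forces the perturbation argument in the last step; aside from this, the proof is a routine combination of formula~(\ref{simslice}), Horn's theorem, and density of symmetric matrices with distinct eigenvalues.
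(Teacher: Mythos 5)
Your reduction via formula~(\ref{simslice}), and the observation that an arbitrary diagonal can be realized when the first column of $A$ has no zero entry, matches the first half of the paper's proof. The gap is in the density step. You argue that the set $\mathcal{U}$ of symmetric matrices with $n$ distinct eigenvalues whose orthogonal eigenvectors all have nonzero first coordinate is open, and that exhibiting a single witness then suffices. It does not: a nonempty open subset of a dense set need not be dense (the interval $(0,\tfrac{1}{2})$ is open and nonempty inside the dense set $(0,1)$, yet is not dense in $[0,1]$). What you actually need is that the complement of $\mathcal{U}$ inside the set $\mathcal{D}$ of matrices with distinct eigenvalues is \emph{nowhere dense}, not merely a proper subset. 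This is plausible — one could try to show that the bad locus is locally cut out by analytic equations and that $\mathcal{D}$, being the complement of the discriminant hypersurface in $\cc^{n(n+1)/2}$, is connected, and then invoke the identity principle — but no such argument appears in your proposal, and making it rigorous is not trivial (eigenvectors are only locally analytic, with monodromy and sign ambiguities).

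The paper avoids all of this with a constructive perturbation, Lemma~\ref{approx_nonzero}: \emph{every} complex orthogonal matrix can be approximated by orthogonal matrices with nowhere-vanishing first column, by perturbing the first column and re-running Gram--Schmidt. Given an arbitrary diagonalizable symmetric $S = A^T D A$, one applies the lemma to $A$ to produce orthogonal matrices $A(\epsilon)\to A$ with nowhere-vanishing first column, so the matrices $A(\epsilon)^T D A(\epsilon)$ are exact first slices converging to $S$. Your perturbation of $M_0 = \diag(1,\ldots,n)$ is morally the right move, but you apply it only at a single point; to conclude density you must be able to perturb \emph{any} diagonalizable symmetric matrix into the good set, which is precisely what Lemma~\ref{approx_nonzero}, applied to the diagonalizing orthogonal matrix, accomplishes.
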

An equivalent formulation in the langage of polynomials is that the
set of partial derivatives $\partial f / \partial x_1$ where
$f \in \OW_n(\cc)$ is dense in the space of quadratic forms in $n$
variables. Here the variable $x_1$ could of course be replaced by any other
variable.
\begin{proof}[Proof of Proposition~\ref{density}]
  By Lemma~\ref{diagsymdense}, it is sufficient to show that the
  set of diagonalizable symmetric matrices is in the closure of
  the set of first slices of symmetric tensors in $\OW_n(\cc)$.
  Consider therefore a diagonalizable symmetric matrix $S$ and the
  corresponding quadratic form $f(x)=x^TSx$.
  As recalled in Section~\ref{sec:simdiag},  there is
  an orthogonal matrix $A$ such that $A^TSA$ is diagonal.
  But $A^TSA$ is the matrix of the quadratic form $g(x)=f(Ax)$.
  Therefore, $f$ is of the form
  \begin{equation} \label{sumofsquares}
    f(x)=\sum_{i=1}^n a_i \langle u_i,x \rangle^2
    \end{equation}
    where the $u_i$ are the rows of the orthogonal matrix $A^T$.

  We first consider the case where the coefficients $u_{i1}$ of $x_1$
  in the linear forms $\langle u_i,x \rangle$ are all nonzero.
  In this first case, $S$ can be obtained exactly as the first slice
  of a symmetric tensor in $\OW_n(\cc)$. This is easily seen
  by integrating~(\ref{sumofsquares}) 
  with respect to $x_1$
  to obtain:
  $$F(x)=\sum_{i=1}^n \frac{a_i}{3u_{i1}} \langle u_i,x \rangle^3.$$
  This  polynomial is in $\OW_n(\cc)$, and $f= \partial F / \partial x_1$
  by construction.

  In the general case, by Lemma~\ref{approx_nonzero} below
  we can approximate the tuple
  of vectors $(u_1,\ldots,u_n)$ by a sequence of tuples
  $(v_1(\epsilon),\ldots,v_n(\epsilon))$ of pairwise orthogonal
  unit vectors such that $u_i = \lim_{\epsilon \rightarrow 0} v_i(\epsilon)$
  and $v_{i1}(\epsilon) \neq 0$ for all $i$.
  Our analysis of the first case shows that the quadratic forms
  $$f_{\epsilon}(x)=\sum_{i=1}^n a_i \langle v_i(\epsilon),x \rangle^2$$
  can be obtained exactly as partial derivatives
  $\partial F_{\epsilon}/\partial x_1$ with $F_{\epsilon} \in \OW_n(\cc)$.
    This completes the proof since $f=\lim_{\epsilon \rightarrow 0} f_{\epsilon}$.
\end{proof}

\begin{lemma} \label{approx_nonzero}
  Any complex orthogonal matrix can be approximated to an arbitrary precision
  by orthogonal matrices that have
    no vanishing entries in the first column.
  \end{lemma}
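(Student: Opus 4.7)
The plan is to produce the required approximation by left-multiplying $A$ by a complex orthogonal matrix that is close to the identity. Specifically, if $X \in M_n(\cc)$ is skew-symmetric (that is, $X^T=-X$), then $R(\theta) := \exp(\theta X)$ lies in $O_n(\cc)$ for every $\theta \in \cc$, because $R(\theta)^T R(\theta) = \exp(-\theta X)\exp(\theta X) = I$, and $R(\theta) \to I$ as $\theta \to 0$. Hence $A_\theta := R(\theta) A$ is orthogonal and can be made arbitrarily close to $A$. Its first column is $R(\theta) v$, where $v = A e_1$; note that $\langle v, v\rangle = 1$, so in particular $v \neq 0$.

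The next step is to split the indices into $J = \{j : v_j \neq 0\}$ and $I = \{i : v_i = 0\}$ and note $J \neq \emptyset$. Expanding $R(\theta) v = v + \theta X v + O(\theta^2)$, I observe that for $k \in J$ the $k$-th coordinate of $R(\theta)v$ is $v_k + O(\theta)$, which stays nonzero for small $\theta$, while for $i \in I$ it equals $\theta (Xv)_i + O(\theta^2)$. So the problem reduces to exhibiting a skew-symmetric $X$ with $(Xv)_i \neq 0$ for every $i \in I$.

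The concrete choice I would make is the following. Fix any $j_0 \in J$ and define $X$ by $X_{i,j_0}=1$ and $X_{j_0,i}=-1$ for every $i \in I$, all other entries zero. This matrix is skew-symmetric, and $(Xv)_i = v_{j_0} \neq 0$ for each $i \in I$. A direct check shows $(Xv)_{j_0}=0$ (since every $v_i$ with $i \in I$ vanishes) and $(Xv)_k=0$ for $k \notin I\cup\{j_0\}$, so the nonzero entries at positions in $J$ are perturbed only at order $\theta^2$ and remain nonzero. Consequently, for every sufficiently small $\theta \neq 0$, all entries of $R(\theta)v$ are nonzero while $\|A_\theta - A\|$ is as small as desired.

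I do not foresee a genuine obstacle. The two things to verify are that $\exp$ maps complex skew-symmetric matrices into $O_n(\cc)$ (elementary, since $X^T$ and $X$ commute with $X$) and that the explicit $X$ above creates a first-order perturbation at every zero coordinate. One could equivalently argue abstractly that the map $X \mapsto A\exp(X)e_1$ from $\{X : X^T=-X\}$ is a submersion near $0$ onto a neighborhood of $v$ in the smooth irreducible quadric $\{w : \langle w,w\rangle = 1\}$, which is contained in no coordinate hyperplane; but the direct construction above avoids any appeal to the inverse function theorem.
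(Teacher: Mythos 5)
Your proof is correct, and it takes a genuinely different route from the paper's. The paper constructs the approximating orthogonal matrix ``by hand'': it replaces each vanishing entry of the first column $c_1$ by $\epsilon$, normalizes, and then runs Gram--Schmidt on the modified first column together with the remaining original columns; this forces the paper to address the delicate point that Gram--Schmidt over $\cc$ can fail on isotropic vectors, which it handles by an inductive continuity argument showing $c_j(\epsilon)\to c_j$ and $\langle c_j,c_j\rangle=1$. Your proof instead stays inside the orthogonal group from the outset: you perturb $A$ by left-multiplication with $R(\theta)=\exp(\theta X)$ for a suitably chosen skew-symmetric $X$, and show the first column acquires a nonzero first-order term at every previously vanishing coordinate while the nonzero coordinates are disturbed only at order $\theta^2$. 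This avoids the Gram--Schmidt subtlety entirely (no normalization of potentially isotropic intermediate vectors is needed), is slightly more conceptual (a tangent-space/Lie-algebra perturbation), and the explicit skew-symmetric $X_{i,j_0}=1$, $X_{j_0,i}=-1$ makes the computation of $Xv$ transparent. One tiny remark: you might state the (trivial) case $I=\emptyset$ explicitly, where no perturbation is needed; and it is worth saying that $J\neq\emptyset$ follows from $\langle v,v\rangle=1$, which you do note. Overall the argument is sound and, if anything, a bit cleaner than the paper's.
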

Here, ``approximated to an arbitrary precision''
means that given an orthogonal matrix $A$, we can construct for any small
enough $\epsilon$ an orthogonal matrix $A(\epsilon)$ with the required
property (having no vanishing entry in its first column) such that
$A = \lim_{\epsilon \rightarrow 0} A(\epsilon)$. Or in other words:
the set of orthogonal matrices is equal to the closure of the set of orthogonal matrices that have no vanishing entries in their first columns. 
\begin{proof}
  Let $A$ be an orthogonal matrix and let $c_1,\ldots,c_n$
  be its column vectors. Given $\epsilon \neq 0$ we construct the
  first column $c_1(\epsilon)$ of $A(\epsilon)$ by replacing any null entry
  of $c_1$ by $\epsilon$, and normalizing the resulting vector.
  Then we construct the remaining columns $c_2(\epsilon),\ldots,c_n(\epsilon)$
  by applying the Gram-Schmidt orthonormalization process to
  $c_1(\epsilon),c_2,\ldots,c_n$.
  For $\epsilon$ small enough,
  this process will yield pairwise orthogonal vectors
  $c_1(\epsilon),c_2(\epsilon),\ldots,c_n(\epsilon)$ of unit length
  ($\langle c_j(\epsilon),c_j(\epsilon) \rangle =1$)
  such that $\lim_{\epsilon \rightarrow 0} c_j(\epsilon) = c_j$ for all $j$.
  We therefore obtain an orthogonal matrix $A(\epsilon)$ such that
  $A = \lim_{\epsilon \rightarrow 0} A(\epsilon)$, and by construction
  the entries of the first column of $A(\epsilon)$ are all nonzero.
  
  In order to properly justify this construction it is important to note that,
  contrary to the case of real vectors,\footnote{For a reminder on the real
    or complex Hermitian case, see e.g.\cite[Section~0.6.4]{horn13}.}
  the Gram-Schmidt
  process cannot be carried out on all lists of linearly independent
  complex vectors.
  This is due to the existence of isotropic vectors in $\cc^n$ (such vectors
  cannot be normalized). In the present situation this process {\em will}
  nonetheless succeed for any small enough~$\epsilon$ due to the property
  $\lim_{\epsilon \rightarrow 0} c_j(\epsilon) = c_j$, which can be proved
  by induction on $j$. Since $\langle c_j,c_j \rangle =1$
  for all $j$, this property guarantees that we will not attempt
  to normalize any isotropic vector.
\end{proof}

We now give an example showing that $\OW_2(\cc)$ is not closed.
It can be easily generalized to any $n \geq 2$ by adding "dummy variables" like in the proof of Proposition~\ref {owinsodeco}.
\begin{example} \label{borderex}
The polynomial $f(x_1,x_2)=(x_1+ix_2)^3$ is in $\overline{\OW_2(\cc)} \setminus \OW_2(\cc)$.
One can show that  $f {\not \in}  \OW_2(\cc)$ using Theorem \ref{complexth}; this can be traced to the fact that  the vector $(1, i)$ is isotropic.
Indeed, the Hessian matrices of $\partial f / \partial x_1$ (respectively,  $\partial f / \partial x_2$) are:
$$6\begin{pmatrix}
  1 & i\\
  i & -1
\end{pmatrix},
6i\begin{pmatrix}
  1 & i\\
  i & -1
\end{pmatrix}.$$
These two matrices commute but they are not diagonalizable (they are nilpotent but nonzero).
To see that $f  \in  \overline{\OW_2(\cc)}$, consider the family of polynomials
\begin{equation} \label{limitexample}
  f_{\epsilon}(x_1,x_2) = [x_1+(i+\epsilon) x_2]^3 + \epsilon [(1+\epsilon^2)x_1+(i-\epsilon)x_2]^3.
\end{equation}
Note that $f = \lim_{\epsilon \rightarrow 0} f_{\epsilon}$.
It therefore remains to show that $f_{\epsilon} \in \OW_2(\cc)$
for all $\epsilon$ sufficiently close to 0.
This follows from the fact that the the vector $(1, i+\epsilon)$ is orthogonal 
to $(1+\epsilon^2,i-\epsilon)$.
Indeed, we have $f_{\epsilon} = g_{\epsilon}(A_{\epsilon}x)$
where $g_{\epsilon}(x_1,x_2)=x_1^3+\epsilon x_2^3$
and $$A_{\epsilon}=\begin{pmatrix}
  1 & i+\epsilon\\
  1+\epsilon^2 & i-\epsilon
\end{pmatrix}.$$
We just pointed out that the rows of $A_{\epsilon}$ are orthogonal, and for $\epsilon$ nonzero but small enough
they can be normalized since they are not self-orthogonal.
We can therefore write $A_{\epsilon} = D_{\epsilon}U_{\epsilon}$ where $D_{\epsilon}$
is a diagonal matrix and $U_{\epsilon}$ orthogonal.
Hence we have
\begin{equation} \label{symepsilondec}
  f_{\epsilon}=g_{\epsilon}(D_{\epsilon}U_{\epsilon}x)=g'_{\epsilon}(U_{\epsilon})
  \end{equation}
where $g'_{\epsilon}=\alpha_{\epsilon}x_1^3+\beta_{\epsilon}x_2^3$
for some appropriate coefficients $\alpha_{\epsilon}, \beta_{\epsilon}$.
We conclude that~(\ref{symepsilondec}) provides as needed
an orthogonal decomposition of $f_{\epsilon}$.
\end{example}
\begin{remark} \label{rem:borderex}
  For an even simpler family of polynomials in $\OW_2(\cc)$ witnessing the
  fact that $(x_1+ix_2)^3 \in \overline{\OW_2(\cc)}$, one may replace the
  coefficient~$\epsilon$ of $[(1+\epsilon^2)x_1+(i-\epsilon)x_2]^3$
  in~(\ref{limitexample}) by 0. We obtain the family of polynomials
  $f'_{\epsilon}(x_1,x_2) = [x_1+(i+\epsilon) x_2]^3$, which also has 
  $(x_1+ix_2)^3$ as its limit.
  Moreover, the same argument as in Example~\ref{borderex} shows that
  $f'_{\epsilon} \in \OW_2(\cc)$ for any small enough nonzero $\epsilon$.
  
  Alternatively, one can use Theorem \ref{complexth} to show that $f'_{\epsilon} \in \OW_2(\cc)$. 
  The slices of $f'_{\epsilon}$ are 
  $$S_{\epsilon,1}=\begin{pmatrix}
  1 & i+\epsilon\\
  i+\epsilon& (i+\epsilon)^2
\end{pmatrix},
S_{\epsilon,2}=(i+\epsilon)S_{\epsilon,1}.$$ 
These matrices commute. Moreover, for any small enough nonzero $\epsilon$ the characteristic polynomial $\lambda^2-\lambda[(I+\epsilon)^2+1]$ of $S_{\epsilon,1}$ has distinct roots and the two slices will  therefore be diagonalizable.
\end{remark}
We will now give an example of a polynomial $g \in \overline{\OW_2(\cc)} \setminus \OW_2(\cc)$ for which this property is more delicate to establish than
in the previous example.
\begin{example} \label{bordersodeco}
  Let $g(x_1,x_2)=x_2(x_1+ix_2)^2$. In order to show that $g \in \overline{\OW_2(\cc)}$, one can check
  (by a tedious calculation or the help of a computer algebra system)
  that $g(x) = \lim_{\epsilon \rightarrow 0} g_{\epsilon}(A_{\epsilon} x)$
  where $A_{\epsilon}$ is the matrix of Example~\ref{borderex}
  and
  $$g_{\epsilon}(x_1,x_2)=\frac{(1+\epsilon^2)^3x_1^3-x_2^3}{6\epsilon}.$$

  The Hessian matrix of $\partial g / \partial x_2$ is equal to $2M$
  where $$M=\begin{pmatrix}
  1 & 2i\\
  2i & -3
  \end{pmatrix}.$$
  Since $-1$ is the only eigenvalue of $M$ but
  $M \neq -\mathrm{Id}$, this matrix is not diagonalizable.
  Like in Example~\ref{borderex},
  Theorem \ref{complexth} therefore implies
  that $g {\not \in} \OW_2(\cc)$.
\end{example}
For the polynomial $g$ in the above example, we will now prove
a stronger result than $g {\not \in} \OW_2(\cc)$.
Indeed, it follows from the next lemma that $g$ cannot be expressed
as the sum of cubes of two linear forms.
\begin{lemma} \label{lem:2cubes}
  Let $h(x_1,x_2)=l_1(x_1,x_2)l_2(x_1,x_2)^2$ where
  $l_1,l_2$ are two linear forms.
  If there are two linear forms $\ell_1,\ell_2$ such that
  $$h(x_1,x_2)=\ell_1(x_1,x_2)^3+\ell_2(x_1,x_2)^3$$
  then $l_1,l_2$ cannot be linearly independent.
\end{lemma}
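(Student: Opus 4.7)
The plan is to prove the contrapositive: if $l_1, l_2$ are linearly independent, then $h = l_1 l_2^2$ cannot be written as a sum of two cubes of linear forms. The key tool is unique factorization of binary forms (equivalently, of univariate polynomials after dehomogenizing), combined with the classical identity $a^3 + b^3 = (a+b)(a+\omega b)(a+\omega^2 b)$ where $\omega = e^{2\pi i/3}$ is a primitive cube root of unity.

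First I would observe that when $l_1, l_2$ are linearly independent, $h = l_1 l_2^2$ is a nonzero binary cubic whose unique factorization into linear factors (up to permutation and scalars) consists of the simple factor $l_1$ together with the double factor $l_2$. In particular $h$ has exactly two distinct linear factors, and it is not a perfect cube. I would then assume for contradiction that $h = \ell_1^3 + \ell_2^3$, and split into two cases according to whether $\ell_1, \ell_2$ are linearly dependent or independent.

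In the first case, $\ell_2 = \lambda \ell_1$ for some scalar $\lambda$, so $h = (1 + \lambda^3)\ell_1^3$. This is either the zero polynomial or a perfect cube, and neither is consistent with the factorization pattern of $l_1 l_2^2$ with $l_1, l_2$ independent. In the second case, the linear independence of $\ell_1$ and $\ell_2$ lets me factor
\[
\ell_1^3 + \ell_2^3 = (\ell_1 + \ell_2)(\ell_1 + \omega \ell_2)(\ell_1 + \omega^2 \ell_2),
\]
and the three factors $\ell_1 + \omega^j \ell_2$ are pairwise linearly independent precisely because $\ell_1, \ell_2$ are. So in this case $h$ would be a product of three pairwise linearly independent linear forms, i.e.\ a binary cubic with three distinct simple linear factors. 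This contradicts the presence of the repeated factor $l_2$ in $h = l_1 l_2^2$.

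The argument is short and the only mild subtlety is verifying pairwise linear independence of the three factors $\ell_1 + \omega^j \ell_2$, which is immediate. The substance of the proof is the classical observation that a binary cubic decomposes as a sum of two cubes of linear forms if and only if it has three distinct roots in $\mathbb{P}^1$ (or degenerates to a single cube), while $l_1 l_2^2$ with $l_1, l_2$ linearly independent has only two distinct roots, one of them double.
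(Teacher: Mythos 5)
Your proof is correct, and it takes a genuinely different route from the paper's. You argue via the classical factorization $\ell_1^3+\ell_2^3=(\ell_1+\ell_2)(\ell_1+\omega\ell_2)(\ell_1+\omega^2\ell_2)$ over $\cc$ and unique factorization of binary forms: a sum of two cubes of linearly independent linear forms must be squarefree (three pairwise non-proportional factors), while $l_1l_2^2$ with $l_1,l_2$ independent has a repeated factor; the degenerate case where $\ell_1,\ell_2$ are proportional gives a perfect cube or zero, again incompatible. The paper instead normalizes to $l_1=x_1$, $l_2=x_2$ by a change of variables and then invokes Lemma~\ref{lem:hessian}: if $x_1x_2^2=\ell_1^3+\ell_2^3$ with $\ell_1,\ell_2$ independent, the Hessian determinant of the right-hand side is a nonzero scalar times $\ell_1\ell_2$, hence squarefree, whereas the Hessian determinant of $x_1x_2^2$ is $-4x_2^2$. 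Both arguments ultimately reduce to the same squarefreeness obstruction, but your route is more elementary and self-contained (no Hessian machinery, just unique factorization and a cube root of unity), while the paper's route is consistent with its systematic use of Hessians elsewhere and is designed to extend directly to the $n$-variable setting in Proposition~\ref{prop:x_1x_2^2}, where your two-variable factorization identity would no longer apply on its own. One minor caveat: your argument uses a primitive cube root of unity, so it is tied to $\cc$ (or fields containing $\omega$); this matches the paper's ambient field, so there is no issue here.
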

\begin{proof}
  Suppose that $l_1,l_2$ are linearly independent. By performing an invertible
  change of variables we can reduce to the case $l_1=x_1,l_2=x_2$.
  We must therefore show that one cannot write
  $$x_1x_2^2 = \ell_1(x_1,x_2)^3+\ell_2(x_1,x_2)^3.$$
  If such a decomposition exists, the linear forms $\ell_1,\ell_2$ must be linearly independent
  since $x_1x_2^2$ is not the cube of a linear form.
  By Lemma~\ref{lem:hessian}, the Hessian determinant of the right hand side
  must be a constant multiple of $\ell_1 \ell_2$. In particular, it is
  a squarefree polynomial. 
  But a simple computation shows that the Hessian determinant 
  of $x_1x_2^2$ is equal to $-4x_2^2$.
\end{proof}
The polynomial of Example~\ref{bordersodeco}
can be viewed as a polynomial in $n>2$ variables
by introducing $n-2$ dummy variables,
i.e., we can set $g(x_1,x_2,x_3,\ldots,x_n)=x_2(x_1+ix_2)^2$.
Next we show that introducing these extra variables does not help write
$g$ as a sum of cubes of linearly independent linear forms.
In fact, like Lemma~\ref{lem:2cubes} this result holds for any polynomial
of the form $l_1(x_1,x_2)l_2(x_1,x_2)^2$ where the linear forms $l_1,l_2$
are linearly independent.
\begin{proposition} \label{prop:x_1x_2^2}
  Suppose that
  $$l_1(x_1,x_2)l_2(x_1,x_2)^2 =
  \ell_1(x_1,\ldots,x_n)^3+\cdots+\ell_k(x_1,\ldots,x_n)^3.$$
  If the linear forms $l_1,l_2$ are linearly independent then the
  linear forms $\ell_1,\ldots,\ell_k$ cannot be linearly independent.
\end{proposition}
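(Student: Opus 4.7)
The plan is to reduce to the normalized setup $l_1 = x_1$, $l_2 = x_2$ by an invertible linear change of variables in the first two coordinates (permissible since $l_1,l_2$ are linearly independent in $K[x_1,x_2]_1$), and then argue by contradiction. So I would assume $\ell_1,\ldots,\ell_k$ are linearly independent in $K[x_1,\ldots,x_n]_1$, extend them to a basis $\ell_1,\ldots,\ell_n$ of the full space of linear forms, and perform the change of variables $y_j = \ell_j(x_1,\ldots,x_n)$. Under this substitution $x_1$ and $x_2$ become two linearly independent linear forms $L_1(y)$ and $L_2(y)$, while the right-hand side turns into $y_1^3+\cdots+y_k^3$, giving the target equation
\[
L_1(y)\,L_2(y)^2 \;=\; y_1^3 + \cdots + y_k^3.
\]

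Next I would argue that we may assume $k = n$. Indeed, for every $j > k$ the right-hand side does not involve $y_j$, so differentiating the left-hand side gives $L_2\bigl(a_j L_2 + 2 b_j L_1\bigr) = 0$, where $a_j,b_j$ are the coefficients of $y_j$ in $L_1,L_2$. Since $L_2 \neq 0$ and $L_1,L_2$ are linearly independent this forces $a_j = b_j = 0$, so $L_1,L_2 \in K[y_1,\ldots,y_k]$ and one may restrict the entire identity to $k$ variables. In particular $k \geq 2$, as $L_1,L_2$ cannot be independent in one variable.

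With $k = n \geq 2$, the task reduces to showing that $L_1 L_2^2 = y_1^3 + \cdots + y_n^3$ has no solution with $L_1,L_2$ linearly independent. For $n = 2$ this is exactly the statement of Lemma~\ref{lem:2cubes}, so that case is immediate. For $n \geq 3$ I would instead compare Hessian determinants. Writing $L_1 = a^\top y$, $L_2 = b^\top y$, a direct differentiation gives
\[
H_{L_1 L_2^2} \;=\; 2 L_2\,(ab^\top + ba^\top) \;+\; 2 L_1\,bb^\top,
\]
whose image is contained in $\operatorname{span}(a,b)$, so its rank is at most $2 < n$ and in particular $\det H_{L_1 L_2^2} = 0$. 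On the other hand, Lemma~\ref{lem:hessian} applied to $y_1^3+\cdots+y_n^3$ (the forms $y_1,\ldots,y_n$ being linearly independent) yields $\det H = c\,y_1 \cdots y_n$ with $c \neq 0$, a contradiction.

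The main obstacle is the reduction to $k = n$: without it neither Lemma~\ref{lem:hessian} nor the rank argument can be applied, because the sum of cubes on the right uses fewer forms than the ambient dimension. Once that reduction is in place, the Hessian-rank computation handles every dimension $n \geq 3$ uniformly, and the small-dimensional residue $n = 2$ is absorbed by Lemma~\ref{lem:2cubes}, whose purpose is precisely to anchor the argument in that unavoidable base case.
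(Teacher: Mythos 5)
Your proof is correct and follows essentially the same strategy as the paper's: normalize $l_1 = x_1$, $l_2 = x_2$, reduce to the case where the number of cubes equals the number of variables, dispose of the two-variable base case with Lemma~\ref{lem:2cubes}, and use Lemma~\ref{lem:hessian} to derive a contradiction for $n \geq 3$ because the Hessian determinant of the left-hand side vanishes while that of the right-hand side does not. Your handling of the $k < n$ reduction is slightly cleaner than the paper's (which sets $n-k$ variables to zero and then splits into subcases depending on whether $x_1$ or $x_2$ was among them): by passing to the $\ell_j$-coordinates and differentiating with respect to $y_j$ for $j>k$, you show in one step that $L_1,L_2$ live in $K[y_1,\ldots,y_k]$, and your explicit rank-two formula for $H_{L_1 L_2^2}$ makes the vanishing of the Hessian determinant transparent without having to return to the $x_1 x_2^2$ form.
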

\begin{proof}
  By performing an invertible change of variables we can assume without loss
  of generality that $l_1=x_1$ and $l_2=x_2$.
  Let us therefore suppose by contradiction that
  \begin{equation} \label{x_1x_2^2}
    x_1 x_2^2 =
    \ell_1(x_1,\ldots,x_n)^3+\cdots+\ell_k(x_1,\ldots,x_n)^3
    \end{equation}
 where the $\ell_i$ are linearly independent.
 We consider first the case $k=n$.
By Lemma~\ref{lem:2cubes} we must have $n \geq 3$. 
By Lemma~\ref{lem:hessian} the Hessian determinant of the
 right-hand side of~(\ref{x_1x_2^2}) is not identically zero;
 but for $n \geq 3$, the Hessian determinant of the left hand side (viewed as a polynomial in $n$ variables) is~0 since $x_1x_2^2$ does not depend on the last $n-2$ variables.
 
 It therefore remains to consider the case $k <n$. Since the $\ell_i$ are linearly independent, one can set $n-k$ of the variables $x_1,\ldots,x_n$ to~0
 so that the resulting linear forms $\ell_1',\ldots,\ell_k'$ in $k$ variables
 remain linearly independent.
 If $x_1$ or $x_2$ are among the variables that have been set to~0, we obtain
 from~(\ref{x_1x_2^2}) the identity $\ell_1'^3+\cdots+\ell_k'^3=0$.
 This is impossible since the Hessian determinant of the left hand side
 is nonzero by Lemma~\ref{lem:hessian}.
 If $x_1,x_2$ have not been set to~0 we obtain 
 $$x_1x_2^2 = \ell_1'^3+\cdots+\ell_k'^3$$
 and we are back to the case $k=n$: as shown earlier in the proof,
 this identity cannot be satisfied if the $\ell_i'$ are linearly independent.
\end{proof}
In the remainder of Section~\ref{sclosure} we prove the central result
of this section:\footnote{The theorem's statement already appears as Theorem~\ref{firstinclosure} before Section~\ref{realwaring}. We reproduce it here for
  the reader's convenience.}
\begin{theorem} \label{inclosure}
  For every $n \geq 1$ we have $$\OW_n(\cc) \subseteq \SODECO_n(\cc) \subseteq \overline{\OW_n(\cc)}.$$
  These two inclusions are strict for every $n \geq 2$.
  \end{theorem}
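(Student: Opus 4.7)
The plan is to treat the two inclusions and their strictness separately. The first inclusion $\OW_n(\cc) \subseteq \SODECO_n(\cc)$, together with its strictness for $n \geq 2$, is already established as Corollary~\ref{owinsodeco} (via the witness $(x_1+ix_2)^3$, extended to higher $n$ by adding dummy variables), so nothing remains to be done there.

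For the inclusion $\SODECO_n(\cc) \subseteq \overline{\OW_n(\cc)}$, the idea is to deform any $\SODECO$ representative along an orthogonal isotropic supplement provided by Corollary~\ref{orthosupplement}, thereby turning isotropic summands into non-isotropic ones without destroying pairwise orthogonality. Concretely, suppose $f = \sum_{i=1}^k \langle v_i,x\rangle^3$ with $v_1,\ldots,v_k$ linearly independent and pairwise orthogonal; relabel so that $v_1,\ldots,v_r$ are isotropic and $v_{r+1},\ldots,v_k$ are not. Let $U = \mathrm{span}(v_1,\ldots,v_r)$, $V = \mathrm{span}(v_{r+1},\ldots,v_k)$, and invoke Corollary~\ref{orthosupplement} to produce a basis $v_1',\ldots,v_r'$ of an isotropic supplement $U'$ to $U$ with $\langle v_i,v_j'\rangle = \delta_{ij}$ and $U' \perp V$. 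Set $\tilde v_i(\epsilon) = v_i + \epsilon v_i'$ for $i \leq r$ and $\tilde v_i(\epsilon) = v_i$ otherwise. Using $\langle v_i,v_j'\rangle = \delta_{ij}$, the total isotropy of $U$ and $U'$ (which gives $\langle v_i',v_j'\rangle = 0$), and $U' \perp V$, a direct bilinear computation shows that the $\tilde v_i(\epsilon)$ are pairwise orthogonal for every $\epsilon$, and that $\langle \tilde v_i(\epsilon),\tilde v_i(\epsilon)\rangle = 2\epsilon$ for $i \leq r$, so they are all non-isotropic whenever $\epsilon \neq 0$. Proposition~\ref{snonisotropic} then gives
\[
f_\epsilon \;:=\; \sum_{i=1}^k \langle \tilde v_i(\epsilon),x\rangle^3 \;\in\; \OW_n(\cc),
\]
and $f_\epsilon \to f$ as $\epsilon \to 0$, so $f \in \overline{\OW_n(\cc)}$.

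For strictness of the second inclusion, I would use the polynomial $g(x_1,x_2) = x_2(x_1+ix_2)^2$ from Example~\ref{bordersodeco}, which already belongs to $\overline{\OW_2(\cc)}$. Membership of $g$ in $\SODECO_2(\cc)$ would mean $g = \sum_{j=1}^k \langle v_j,x\rangle^3$ with $v_1,\ldots,v_k$ linearly independent; dually, the linear forms $\ell_j(x) = \langle v_j,x\rangle$ would be linearly independent. But $g = l_1 l_2^2$ with $l_1 = x_2$ and $l_2 = x_1+ix_2$ linearly independent, so Proposition~\ref{prop:x_1x_2^2} forbids such a decomposition. For $n \geq 3$ we simply regard $g$ as a polynomial in $n$ variables by introducing dummies $x_3,\ldots,x_n$; the proposition is stated in that generality, so the same obstruction applies.

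The delicate step is the orthogonality bookkeeping in the perturbation: one must ensure simultaneously that the perturbed isotropic vectors remain orthogonal to each other, to the non-isotropic $v_{r+1},\ldots,v_k$, and become genuinely non-isotropic. This is precisely what Corollary~\ref{orthosupplement} is tailored to provide, so once that machinery is invoked the rest of the argument is a short continuity/limit calculation. The strictness portions reduce to already-proved facts (Corollary~\ref{owinsodeco} and Proposition~\ref{prop:x_1x_2^2}).
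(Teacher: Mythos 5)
Your proposal is correct and follows essentially the same route as the paper: the perturbation via Corollary~\ref{orthosupplement} that you carry out inline is exactly the paper's Lemma~\ref{approx}, which it then cites, and the strictness arguments (Corollary~\ref{owinsodeco} for the first inclusion, Example~\ref{bordersodeco} plus Proposition~\ref{prop:x_1x_2^2} applied to $x_2(x_1+ix_2)^2$ for the second) are identical.
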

For the proof 
we need some elements of the theory of quadratic forms,
see Section~\ref{quadratic}.

\begin{lemma} \label{approx}
  Let $(u_1,\ldots,u_k)$ be a tuple of linearly independent pairwise orthogonal
  vectors of $\cc^n$. This tuple can be approximated to an arbitrary precision
  by tuples $(v_1,\ldots,v_k)$ of pairwise orthogonal vectors with
  $\langle v_j, v_j \rangle \neq 0$ for all $j=1,\ldots,k$.
\end{lemma}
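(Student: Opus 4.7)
The plan is to split the list $(u_1,\ldots,u_k)$ into its isotropic and non-isotropic parts and only perturb the isotropic vectors, using the isotropic supplement provided by Corollary~\ref{orthosupplement} to produce the perturbation. After a suitable reordering, let $(u_1,\ldots,u_r)$ be the isotropic vectors in the list and $(u_{r+1},\ldots,u_k)$ the non-isotropic ones; denote by $U$ the subspace spanned by the former and by $V$ the one spanned by the latter.

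By Corollary~\ref{orthosupplement}, there exists an isotropic supplement $U'$ to $U$ with basis $(u'_1,\ldots,u'_r)$ satisfying $\langle u_i,u'_j\rangle=\delta_{ij}$ and with $U'$ orthogonal to $V$. The candidate approximation is then
\[
v_j(\epsilon)=u_j+\epsilon u'_j \text{ for } 1\le j\le r, \qquad v_j(\epsilon)=u_j \text{ for } r+1\le j\le k.
\]
Clearly $v_j(\epsilon)\to u_j$ as $\epsilon\to 0$, so approximation to arbitrary precision will follow once the two required properties are checked.

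The remaining step is to verify pairwise orthogonality and non-isotropy by a direct computation using the defining properties of $U'$. For the diagonal entries with $j\le r$, we get $\langle v_j(\epsilon),v_j(\epsilon)\rangle=2\epsilon\langle u_j,u'_j\rangle=2\epsilon\ne 0$, using that $U$ and $U'$ are totally isotropic; for $j\ge r+1$ nothing is changed so $\langle v_j,v_j\rangle=\langle u_j,u_j\rangle\ne 0$ by the definition of the non-isotropic block. For off-diagonal pairs with $i,j\le r$, all four inner products $\langle u_i,u_j\rangle$, $\langle u'_i,u'_j\rangle$, $\langle u_i,u'_j\rangle$, $\langle u'_i,u_j\rangle$ vanish (the first two by total isotropy, the last two by $\langle u_i,u'_j\rangle=\delta_{ij}$ with $i\ne j$). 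For pairs with $i\le r<j$, the cross term $\langle u'_i,u_j\rangle$ vanishes because $U'$ is chosen orthogonal to $V$, which is exactly why we needed the stronger form of the supplement from Corollary~\ref{orthosupplement}. Finally, the pairs with $r+1\le i,j\le k$ are untouched and orthogonal by hypothesis.

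I do not expect a genuine obstacle here; the only subtle point is that one cannot just add an arbitrary small multiple of something non-isotropic to each $u_j$, because this would in general destroy the orthogonality relations among the $u_i$ themselves. The use of the isotropic supplement $U'$ that is simultaneously orthogonal to $V$ resolves this and is the reason for stating Corollary~\ref{orthosupplement} in that precise form.
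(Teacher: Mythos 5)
Your proof is correct and follows essentially the same route as the paper's own argument: the same split of the tuple into isotropic and non-isotropic parts, the same appeal to Corollary~\ref{orthosupplement} to obtain an isotropic supplement $U'$ orthogonal to $V$, the same perturbation $v_j(\epsilon)=u_j+\epsilon u'_j$ for $j\le r$, and the same case-by-case verification of the inner products. Your closing remark about why a naive perturbation would break orthogonality, and hence why the simultaneous orthogonality of $U'$ to $V$ is the crux, correctly identifies the one subtle point.
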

Here, ``approximated to an arbitrary precision''
means that for every $\epsilon > 0$ there is a tuple
$(v_1(\epsilon),\ldots,v_k(\epsilon))$ of pairwise orthogonal 
non-isotropic vectors such that $u_j=\lim_{\epsilon \rightarrow 0} v_j(\epsilon)$ for all $j=1,\ldots,k$.
Note that the orthogonality conditions on the $v_j$ together with 
the conditions $\langle v_j, v_j \rangle \neq 0$
imply that these vectors are linearly independent.
\begin{proof}
  Let us assume that the isotropic vectors appear first in 
  $(u_1,\ldots,u_n)$, i.e., $u_1,\ldots,u_r$ are
  (for some $r \geq 0$) the isotropic vectors in this list.
  We denote by $U$ the subspace spanned by the isotropic vectors, and by $V$
  the subspace spanned by $(u_{r+1},\ldots,u_k)$.
  By Corollary~\ref{orthosupplement} we can choose for $U$ an isotropic
  supplement $U'$ of $U$ which is orthogonal to $V$.
  Let $(u'_1,\ldots,u'_r)$ be a basis of $U'$ which satisfies the property
  $\langle u_i,u'_j \rangle = \delta_{ij}$ from Theorem~\ref{supplement}.

Given $\epsilon$ we approximate $u_j$ by
$v_j(\epsilon)=u_j+\epsilon u'_j$ for $j \leq r$.
We set $v_j(\epsilon) = u_j$ for $r+1 \leq j \leq k$.
Obviously, we have $u_j=\lim_{\epsilon \rightarrow 0} v_j(\epsilon)$ for all $j$.

For $j \leq r$ and $l > r$ we have
$$\langle v_j(\epsilon),v_l(\epsilon) \rangle =
\langle u_j+\epsilon u'_j, u_l \rangle =0$$
since $\langle u_j, u_l \rangle = 0$ ($u_1,\ldots,u_k$ are pairwise orthogonal)
and $\langle u'_j, u_l \rangle = 0$ ($U'$ is orthogonal to $V$).
For $j,l >r$ we have $\langle v_j(\epsilon),v_l(\epsilon) \rangle =
\langle u_j, u_l \rangle$; this is nonzero for $j=l$, and 0 for $j \neq l$.
For $j,l \leq r$ 
we have $$\langle v_j(\epsilon),v_l(\epsilon) \rangle =
\langle u_j+\epsilon u'_j, u_l+\epsilon u'_l \rangle =
\epsilon[\langle u_j, u'_l \rangle + \langle u'_j, u_l \rangle]$$
since $U$ and $U'$ are totally isotropic.
This is equal to $2\epsilon$ for $j=l$, and to 0 for $j \neq l$.
 \end{proof}

\begin{proof}[Proof of Theorem~\ref{inclosure}]
  The first inclusion, and the fact it is strict for $n\geq 2$, is given
  by Corollary~\ref{owinsodeco}.
  For the second inclusion,
  consider a symmetric tensor $S \in \SODECO_n(\cc)$ and the corresponding decomposition $S=\sum_{j=1}^k  u_j^{\otimes 3}$ from Definition~\ref{sodeco}.
  Lemma~\ref{approx} provides us with pairwise orthogonal non-isotropic vectors
  $v_j(\epsilon)$ such that $u_j = \lim_{\epsilon \rightarrow 0} v_j(\epsilon)$.
  As a result, $S$ is the limit of the symmetric tensors
  $S(\epsilon)=\sum_{j=1}^k  v_j(\epsilon)^{\otimes 3}$.
  These tensors are in $\OW_n(\cc)$ by Proposition~\ref{snonisotropic}.

  It remains to show that the second inclusion is strict for $n \geq 2$.
  The witness of this separation will be the polynomial
  $$g_n(x_1,\ldots,x_n) = x_2(x_1+ix_2)^2.$$
  We have shown in Example~\ref{bordersodeco}
  that $g_2 \in \overline{\OW_2(\cc)}$.
  For $n>2$ the property $g_n \in \overline{\OW_n(\cc)}$ continues to hold 
  since $\OW_2(\cc) \subseteq \OW_n(\cc)$.\footnote{For this inclusion to make
    sense we must of course continue to view a polynomial in two variables
    as a polynomial in $n$ variables.}
  Finally, Proposition~\ref{prop:x_1x_2^2}
  shows that $g_n {\not \in} \SODECO_n(\cc)$.
  Note that the polynomial of Example~\ref{borderex} cannot be used
  instead of $g_n$ to prove this separation
  since it belongs to $\SODECO_n(\cc)$.
\end{proof}

\subsection{The ASD property} \label{sec:asd}
  
In this section we investigate $\overline{\OW_n(\cc)}$ "from above", i.e., we find properties that must be satisfied by all
of its elements. Here is a simple example of such a property:
\begin{proposition} \label{commutativity}
  If $f \in \overline{\OW_n(\cc)}$ the matrices $A_1,\ldots,A_n$ in
  (\ref{partial3}) pairwise commute.
\end{proposition}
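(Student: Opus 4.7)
The plan is to observe that the property ``the matrices $A_1,\ldots,A_n$ pairwise commute'' is defined by a system of polynomial equations in the coefficients of $f$, and is therefore preserved under limits.

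First I would recall that by~(\ref{partial3}) each entry of $A_i$ is (up to a constant factor) a coefficient of $f$, so the entries of the commutator $A_iA_j - A_jA_i$ are polynomials in the coefficients of $f$. Consequently, the set
\[
C_n = \{\, f \in \cc[x_1,\ldots,x_n]_3 \mid A_iA_j = A_jA_i \text{ for all } 1 \leq i,j \leq n \,\}
\]
is Zariski-closed in $\cc[x_1,\ldots,x_n]_3$, and in particular Euclidean-closed.

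Next I would establish the inclusion $\OW_n(\cc) \subseteq C_n$. For $f \in \OW_n(\cc)$, Theorem~\ref{complexth} asserts that the slices $S_1,\ldots,S_n$ of $f$ pairwise commute. Since $A_i = 6S_i$ by the remark following Fact~\ref{hessianfact}, the $A_i$ themselves pairwise commute, so $f \in C_n$.

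Combining these two points, $\overline{\OW_n(\cc)} \subseteq \overline{C_n} = C_n$, which is precisely the conclusion of the proposition. There is no real obstacle here: the whole argument rests on the trivial fact that a polynomially defined condition satisfied throughout a set is automatically satisfied throughout its closure. (Note that diagonalizability of the $A_i$, which is the other half of the characterization in Theorem~\ref{complexth}, is \emph{not} a closed condition and hence does not survive the passage to the closure; this is consistent with Example~\ref{borderex}, where the limit polynomial has non-diagonalizable but commuting slices.)
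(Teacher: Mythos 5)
Your argument is exactly the paper's: by Theorem~\ref{complexth} the slices (hence the $A_i$) commute on $\OW_n(\cc)$, and commutativity is a Zariski-closed (hence Euclidean-closed) condition, so it persists on $\overline{\OW_n(\cc)}$. The remark about diagonalizability not being closed, while not in the paper's one-line proof, is a correct and consistent observation.
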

\begin{proof}
  This follows from Theorem~\ref{complexth} and the fact that the commutativity
  conditions $A_iA_j-A_jA_i=0$ are closed.
\end{proof}
Are there additional conditions that must be satisfied by the polynomials
in $\overline{\OW_n(\cc)}$ besides the above commutativity conditions?
The {\em ASD property} defined in the introduction turns out to be useful
for the investigation of this question.
There is a significant body of work on the ASD property,
see~\cite{omeara06,omeara11} and the references therein.
\begin{proposition} \label{owasd}
  If $f \in \overline{\OW_n(\cc)}$ the matrices $A_1,\ldots,A_n$ in (\ref{partial3}) are approximately simultaneously diagonalizable.
\end{proposition}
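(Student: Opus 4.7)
The proof plan is essentially a direct unpacking of definitions, combining the closure characterization of $\overline{\OW_n(\cc)}$ with the description of $\OW_n(\cc)$ from Theorem~\ref{complexth}.

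First, I would fix $f \in \overline{\OW_n(\cc)}$ and choose a sequence $(f^{(m)})_{m \geq 1}$ in $\OW_n(\cc)$ converging to $f$. For each $m$, let $A_1^{(m)},\ldots,A_n^{(m)}$ be the matrices defined from $f^{(m)}$ as in~(\ref{partial3}). Since these matrices are scalar multiples of the slices of the symmetric tensor associated to $f^{(m)}$, Theorem~\ref{complexth} implies that the tuple $(A_1^{(m)},\ldots,A_n^{(m)})$ consists of pairwise commuting diagonalizable matrices. A standard linear algebra fact (cited in the proof of Theorem~\ref{realth}) then gives that this tuple is simultaneously diagonalizable.

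Next, I would observe that the assignment sending a degree~3 polynomial to its associated tuple $(A_1,\ldots,A_n)$ via~(\ref{partial3}) is linear, hence continuous, in the coefficients of the polynomial. Therefore $A_i^{(m)} \to A_i$ for each $i$ as $m \to \infty$. This exhibits the tuple $(A_1,\ldots,A_n)$ as a limit of simultaneously diagonalizable tuples, which is precisely the ASD property of Definition~\ref{asd}: for any $\epsilon>0$, choose $m$ large enough that $\|A_i - A_i^{(m)}\| < \epsilon$ for all $i$, and take $B_i = A_i^{(m)}$.

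There is no real obstacle here; the work was done in establishing Theorem~\ref{complexth} and in introducing the ASD vocabulary. The only point to be careful about is noting that the map from $f$ to its tuple of Hessian-coefficient matrices is continuous, so that the approximation of $f$ in $\overline{\OW_n(\cc)}$ transfers to an approximation of the tuple $(A_1,\ldots,A_n)$ by simultaneously diagonalizable tuples.
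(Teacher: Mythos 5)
Your proposal is correct and takes essentially the same route as the paper: the paper's proof simply cites Theorem~\ref{complexth} together with the fact that commuting diagonalizable matrices are simultaneously diagonalizable, and you have unpacked the definitions of closure and ASD (including the continuity of the map from polynomial to slice tuple) that the paper leaves implicit.
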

\begin{proof}
  This follows immediately from Theorem~\ref{complexth} and the already mentioned fact that a collection of matrices is simultaneously diagonalizable
  iff each matrix is diagonalizable and they pairwise commute.
\end{proof}
It is not clear whether the converse of this proposition holds because
a tuple $(B_1,\ldots,B_n)$ approximating $(A_1,\ldots,A_n)$ in the sense of
Definition~\ref{asd} might not come from a symmetric tensor.
In fact, it is not clear whether a tuple of symmetric matrices
satisfying the ASD property can always be approximated in the sense of
Definition~\ref{asd} by a tuple of simultaneously diagonalizable
{\em symmetric} matrices.

In the next theorem we collect some properties that must be satisfied
by any tuple of matrices satisfying the ASD property.
\begin{theorem} \label{ASDimplies}
 Any tuple $(A_1,\ldots,A_k)$  of approximately simultaneously diagonalizable matrices of $M_n(\cc)$
  must satisfy the following properties:
 \begin{itemize}
 \item[(i)] The matrices $A_1,\ldots,A_k$ pairwise commute.
 \item[(ii)] The subalgebra of $M_n(\cc)$ generated by $A_1,\ldots,A_k$ and the identity matrix is of dimension at most $n$.
 \item[(iii)] The centralizer of $A_1,\ldots,A_k$ is of dimension at least $n$.
 \end{itemize}
  \end{theorem}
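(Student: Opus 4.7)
The plan is to argue each of the three properties by a semicontinuity argument. Write $(A_1,\ldots,A_k) = \lim_{m \to \infty} (B_1^{(m)},\ldots,B_k^{(m)})$ where each approximating tuple is simultaneously diagonalizable; such a sequence exists by Definition~\ref{asd} applied with $\epsilon = 1/m$. Part (i) is then immediate: each $(B_1^{(m)},\ldots,B_k^{(m)})$ consists of pairwise commuting matrices, and the relation $B_i^{(m)} B_j^{(m)} - B_j^{(m)} B_i^{(m)} = 0$ passes to the limit by continuity of matrix multiplication.

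For (ii) I would first observe that for any commuting tuple $(C_1,\ldots,C_k)$ in $M_n(\cc)$, Cayley--Hamilton implies that $\cc[C_1,\ldots,C_k,\mathrm{Id}]$ is spanned by the $n^k$ monomials $C_1^{i_1}\cdots C_k^{i_k}$ with $0 \leq i_j < n$, so bounding the dimension of the generated algebra amounts to bounding the span of a fixed finite list of monomials. For each $m$ the $B_i^{(m)}$ are simultaneously conjugate to diagonal matrices, hence the subalgebra they generate sits inside an $n$-dimensional algebra of diagonal matrices, and in particular the $n^k$ monomials above span a subspace of $M_n(\cc) \cong \cc^{n^2}$ of dimension at most $n$. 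The locus of $n^k$-tuples of vectors in $\cc^{n^2}$ whose span has dimension at most $n$ is Zariski-closed (cut out by the $(n+1)\times(n+1)$ minors of the associated $n^2 \times n^k$ matrix), so the condition survives in the limit and bounds $\dim \cc[A_1,\ldots,A_k,\mathrm{Id}]$ by $n$.

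For (iii) I would encode the centralizer as the kernel of the linear map $\phi_{(C_1,\ldots,C_k)} : M_n(\cc) \to M_n(\cc)^k$, $M \mapsto (C_i M - M C_i)_{i=1}^k$. For a simultaneously diagonalizable tuple, passing to the common eigenbasis shows that the centralizer is the block-diagonal algebra indexed by the joint eigenspaces of sizes $n_1,\ldots,n_s$ with $\sum_j n_j = n$, and therefore has dimension $\sum_j n_j^2 \geq \sum_j n_j = n$. Since the entries of $\phi_{(C_1,\ldots,C_k)}$ depend continuously on the $C_i$, and since the rank of a matrix is a lower semicontinuous function of its entries, the kernel dimension is upper semicontinuous in $(C_1,\ldots,C_k)$. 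This yields $\dim \ker \phi_{(A_1,\ldots,A_k)} \geq \limsup_m \dim \ker \phi_{(B^{(m)}_1,\ldots,B^{(m)}_k)} \geq n$, as required.

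None of the three parts is difficult in isolation; the only mildly technical step is in (ii), where the subalgebra generated by a tuple is a priori described via polynomials of unbounded degree, and one must cut down to a fixed finite list of monomials (via Cayley--Hamilton together with the commutativity already established in (i)) before the closedness of the ``span has dimension at most $n$'' condition can be invoked.
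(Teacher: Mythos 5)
Your proof is correct on all three counts, and it is worth noting that the paper itself does not prove this theorem: it simply attributes (i) and (ii) to the paper of O'Meara and Vinsonhaler and (iii) to Theorem 6.6.2 of O'Meara, Clark and Vinsonhaler, offering only the one-line remark that (i) ``follows easily from the fact that the commutativity of two matrices is a closed condition.'' Your treatment of (i) coincides with that remark. For (ii) and (iii) you have supplied the natural semicontinuity arguments that the paper outsources to the literature, and they are right: the key observation in (ii) is exactly the one you flag, namely that the algebra generated by a tuple is a priori described by polynomials of unbounded degree, so one must first pin down a fixed finite spanning set of monomials (via Cayley--Hamilton together with the commutativity already obtained in (i)) before the ``$(n+1)\times(n+1)$ minors vanish'' condition can be invoked as a Zariski-closed condition preserved under limits; and in (iii), passing to a common eigenbasis gives centralizer dimension $\sum_j n_j^2 \geq \sum_j n_j = n$ for the approximants, and the kernel dimension of the commutator map $M\mapsto(C_iM-MC_i)_i$ is upper semicontinuous because rank is lower semicontinuous, which transfers the bound to the limit. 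In short, where the paper cites, you prove, and the argument is sound and essentially the standard one.
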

  The first property can be found in \cite{omeara06} and follows easily from the fact (already used in the proof of Proposition \ref{commutativity}) that the commutativity of two matrices is a closed condition.
  The second property is established in the same paper, and the third one is Theorem 6.6.2 from \cite{omeara11}.
    From Theorem~\ref{ASDimplies} and Proposition \ref{owasd} we have:
  \begin{theorem} \label{owbar}
  If $f \in \overline{\OW_n(\cc)}$ the matrices $A_1,\ldots,A_n$ in (\ref{partial3}) must satisfy the following properties:
  \begin{itemize}
 \item[(i)]  $A_iA_j = A_j A_i$ for all $i,j$.
 \item[(ii)] The subalgebra of $M_n(\cc)$ generated by $A_1,\ldots,A_n$ and the identity matrix is of dimension at most $n$.
 \item[(iii)] The centralizer of $A_1,\ldots,A_n$ is of dimension at least $n$.
 \end{itemize}
  \end{theorem}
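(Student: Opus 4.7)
The plan is to obtain Theorem~\ref{owbar} as an immediate corollary of the two results established just above it: Proposition~\ref{owasd} and Theorem~\ref{ASDimplies}. Given $f \in \overline{\OW_n(\cc)}$, the first step is to invoke Proposition~\ref{owasd} on the matrices $A_1,\ldots,A_n$ of~(\ref{partial3}), which gives us that this tuple is approximately simultaneously diagonalizable in the sense of Definition~\ref{asd}. The second step is then to feed this tuple into Theorem~\ref{ASDimplies} (taking $k=n$), and read off the three conclusions (i), (ii), (iii) directly.

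There is essentially no additional obstacle here: all the substantive work has been done in the preceding results. Property~(i) could in fact be established more directly via Proposition~\ref{commutativity}, since pairwise commutativity is a closed condition and $\OW_n(\cc)$ consists of tensors whose slices commute by Theorem~\ref{complexth}; but stating it as a consequence of (i) in Theorem~\ref{ASDimplies} keeps the presentation uniform. Properties~(ii) and~(iii) are properly ``ASD'' conclusions, relying respectively on the fact from~\cite{omeara06} that the dimension of the generated unital subalgebra is an upper semi-continuous function of the tuple (with value at most $n$ on simultaneously diagonalizable tuples, where the algebra is contained in a maximal torus of diagonal matrices), and on Theorem~6.6.2 of~\cite{omeara11} which asserts that the centralizer dimension is a lower semi-continuous function of the tuple (bounded below by $n$ on the simultaneously diagonalizable locus).

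Since the theorem is just the conjunction of Proposition~\ref{owasd} and the three parts of Theorem~\ref{ASDimplies}, a single-sentence proof suffices, and no extra lemma needs to be stated in between. The only point worth flagging for the reader is that (ii) and (iii) together are strictly stronger than (i), so the theorem does add genuine new necessary conditions on $\overline{\OW_n(\cc)}$ beyond the commutativity conditions of Proposition~\ref{commutativity}.
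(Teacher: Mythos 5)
Your proposal is correct and matches the paper's argument exactly: the paper derives Theorem~\ref{owbar} as an immediate consequence of Proposition~\ref{owasd} combined with Theorem~\ref{ASDimplies}, which is precisely the two-step chain you describe. The extra commentary on semi-continuity and the alternative route to (i) via Proposition~\ref{commutativity} is accurate but not needed.
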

  In the remainder of this section we show that 
  the converse of this theorem does not hold:
  the conjunction of these 3 conditions does not  imply membership
  in $\overline{\OW_n(\cc)}$.

\begin{theorem} \label{outclosure}
    For every $n \geq 21$ there is a polynomial $f \in \cc[X_1,\ldots,X_n]_3$
    which satisfies conditions
    (i), (ii) and (iii) of Theorem~\ref{owbar} but does not belong to
    $\overline{\OW_n(\cc)}$.
\end{theorem}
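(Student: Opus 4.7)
The plan is to leverage Proposition~\ref{owasd}: if I can exhibit a polynomial whose slices pairwise commute, generate a subalgebra of $M_n(\cc)$ of dimension at most $n$, and have centralizer of dimension at least $n$, yet fail the ASD property, then Proposition~\ref{owasd} forces that polynomial outside $\overline{\OW_n(\cc)}$. So the whole proof reduces to constructing, for each $n\geq 21$, a symmetric tensor of order $3$ and size $n$ whose $n$ slices satisfy conditions (i)--(iii) of Theorem~\ref{owbar} but are not ASD.

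I would start by importing from the ASD literature, e.g.\ \cite{omeara11}, a tuple $(B_1,\ldots,B_k)$ of commuting matrices in some $M_m(\cc)$ that satisfies the analogues of (ii) and (iii) in Theorem~\ref{owbar} yet is not ASD. The bound $n\geq 21$ in the statement reflects the smallest dimension in which such a counterexample can be placed. After a further adjustment (symmetrization, or a jointly symmetric reformulation of the counterexample), I would arrange for the $B_i$ to be symmetric and jointly symmetric, so that they are literally the slices of some symmetric tensor $T^{(1)}\in(\cc^m)^{\otimes 3}$.

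Next I would embed $T^{(1)}$ into $(\cc^n)^{\otimes 3}$ as a direct sum $T=T^{(1)}\oplus T^{(2)}$, where $T^{(2)}$ is a symmetrically odeco tensor on the remaining $n-m$ coordinates, chosen so that its $n-m$ slices $C_1,\ldots,C_{n-m}$ are simultaneously diagonalizable and one designated slice, say $C_1$, has $n-m$ pairwise distinct nonzero eigenvalues. The slices of $T$ then take the block-diagonal form $A_i=B_i\oplus 0$ for $i\leq m$ and $A_i=0\oplus C_{i-m}$ for $i>m$. Conditions (i)--(iii) follow routinely: commutativity holds since cross-block products vanish; the subalgebra generated by the $A_i$ together with $I_n$ decomposes along the direct sum and its dimension can be arranged to stay $\leq n$ by suitable choice of the two algebra budgets; and the centralizer contains $\mathrm{cent}(B_i)\oplus\mathrm{cent}(C_j)$, whose dimension is at least $n$ under the analogous centralizer bounds on the two blocks.

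The main obstacle is to argue that the enlarged tuple $(A_1,\ldots,A_n)$ still fails ASD. Here the ``separator'' slice $A_{m+1}=0\oplus C_1$ plays the crucial role: it has eigenvalue $0$ with multiplicity $m$ and $n-m$ simple nonzero eigenvalues bounded away from $0$, so any close-enough diagonalizable approximation $D_{m+1}$ has spectral projectors close to the block projectors $I_m\oplus 0$ and $0\oplus I_{n-m}$. If $(D_1,\ldots,D_n)$ were a simultaneously diagonalizable approximation of $(A_1,\ldots,A_n)$, commutativity with $D_{m+1}$ would force each $D_i$ to be block-diagonal in the basis adapted to these spectral projectors; restricting to the first block would then produce a simultaneously diagonalizable tuple approximating $(B_1,\ldots,B_k)$, contradicting the non-ASD assumption on the original tuple. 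Combined with Proposition~\ref{owasd}, this yields the desired $f\notin\overline{\OW_n(\cc)}$.
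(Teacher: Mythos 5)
Your approach is genuinely different from the paper's, and it runs into the exact obstacle that the paper flags explicitly. The paper proves Theorem~\ref{outclosure} by constructing $S=\sum_{i=1}^r u_i^{\otimes 3}$ with all the $u_i$ lying in a totally isotropic subspace of dimension $\lfloor n/2\rfloor$; then Propositions~\ref{zeroproduct} and~\ref{centralizer} give conditions (i)--(iii) essentially for free (all products of slices vanish), and $f\notin\overline{\OW_n(\cc)}$ is shown \emph{directly} by a border Waring rank dimension count (Lemma~\ref{highborder}), not by exhibiting a non-ASD tuple. Indeed, the paper remarks right after the statement that although the three ASD necessary conditions are already known not to be sufficient for ASD~\cite{deboor08,omeara11}, ``Theorem~\ref{outclosure} does not follow from this result since the counterexamples in~\cite{deboor08,omeara11} are not constructed from symmetric tensors.''

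That is precisely the gap in your argument. Your first step --- importing a non-ASD tuple $(B_1,\ldots,B_k)$ from the literature and ``after a further adjustment (symmetrization, or a jointly symmetric reformulation)'' arranging for it to be the tuple of slices of some symmetric tensor $T^{(1)}$ --- is not a routine adjustment. For $m$ matrices $B_1,\ldots,B_m\in M_m(\cc)$ to be the slices of a symmetric tensor of size $m$, it is not enough that each $B_i$ be a symmetric matrix; the array $(B_i)_{jk}$ must be invariant under \emph{all} permutations of $(i,j,k)$, a rigid compatibility constraint coupling the matrices to one another. The known counterexamples to the converse of Theorem~\ref{ASDimplies} are not of this form, and no general symmetrization operator will produce such a tuple while preserving the non-ASD property, the algebra-dimension bound, and the centralizer bound simultaneously. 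Because this step is asserted rather than carried out, the rest of the construction --- the direct-sum embedding, the block-dimension bookkeeping for condition (ii), and the spectral-projector perturbation argument for non-ASD --- all rest on an unsupported foundation. (Even setting that aside, your dimension count for (ii) needs care: the unital algebra generated by block-diagonal matrices $B_i\oplus 0$, $0\oplus C_j$ and $I_m\oplus I_{n-m}$ is not obviously of dimension $\le n$ unless $I_m$ already lies in the non-unital algebra generated by the $B_i$'s and similarly for the $C_j$'s.) The paper's isotropic-subspace construction is designed exactly to sidestep the problem you ran into: it manufactures a symmetric tensor whose slices \emph{automatically} satisfy (i)--(iii), and then excludes it from $\overline{\OW_n(\cc)}$ by a rank/dimension argument that never needs to decide whether the slices are ASD.
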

  It is already known that the 3  properties of Theorem~\ref{ASDimplies}
together still do not suffice to imply the ASD
property~\cite{deboor08,omeara11}.
Theorem~\ref{outclosure} does not follow from this result
since the counterexamples
in~\cite{deboor08,omeara11} are not constructed from symmetric tensors.

Before giving the proof of Theorem~\ref{outclosure}
we need some preliminary results.
  In particular, we need the following lemma about the Waring rank of forms
  of degree 3 (recall that the Waring rank of a homogeneous polynomial
  $f$ of degree $d$ is the smallest $r$ such that $f$ can be written as
  a sum of $r$ $d$-th powers of linear forms).
\begin{lemma} \label{ah}
  If $v,r$ are two integers with $r \leq \lceil (v+1)(v+2)/6 \rceil$,
  there exists a degree 3 homogeneous polynomial in $v$ variables
  which can be written as
  a sum of $r$ cubes of linear forms,  but not as a sum of fewer cubes.
\end{lemma}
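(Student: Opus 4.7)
The plan is to invoke the Alexander--Hirschowitz theorem to control the dimensions of the relevant secant varieties. Let $\sigma_r \subseteq \cc[x_1,\ldots,x_v]_3$ denote the Zariski closure of the set of cubic forms expressible as a sum of at most $r$ cubes of linear forms; equivalently, $\sigma_r$ is the affine cone over the $r$-th secant variety of the third Veronese embedding of $\mathbb{P}^{v-1}$. A straightforward parameter count gives the upper bound $\dim \sigma_r \leq rv$, since each summand $L_i^3$ is determined by the $v$ coefficients of the linear form $L_i$.

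The Alexander--Hirschowitz theorem, specialized to degree $3$, asserts that this upper bound is attained whenever it is at most the dimension of the ambient space, with the single exception of $(v,r)=(5,7)$. More precisely, $\dim \sigma_r = \min(rv,\binom{v+2}{3})$ except in that one case, where the expected dimension $35$ drops to $34$. Noting that $\binom{v+2}{3}/v = (v+1)(v+2)/6$, the hypothesis $r \leq \lceil (v+1)(v+2)/6 \rceil$ is precisely the regime in which $rv \leq \binom{v+2}{3}$ for $r$ strictly below the ceiling, while $\dim \sigma_r = \binom{v+2}{3}$ at the ceiling.

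The key step is then to establish the strict inclusion $\sigma_{r-1} \subsetneq \sigma_r$ throughout the range $r \leq \lceil (v+1)(v+2)/6 \rceil$. For $r$ strictly below the ceiling this is immediate from the dimension formula, as $\dim \sigma_{r-1} = (r-1)v < rv = \dim \sigma_r$; at the ceiling one has $\dim \sigma_{r-1} = (r-1)v < \binom{v+2}{3} = \dim \sigma_r$. The defective case $(v,r)=(5,7)$ is handled separately: direct substitution gives $\dim \sigma_7 = 34 > 30 = \dim \sigma_6$, so the strict inclusion persists.

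Once the strict inclusion is in hand, the lemma follows at once. A very general point in the image of the parametrization $(L_1,\ldots,L_r) \mapsto L_1^3 + \cdots + L_r^3$ belongs to $\sigma_r$ but not to the proper closed subvariety $\sigma_{r-1}$, so it is by construction a sum of $r$ cubes yet cannot be written as a sum of fewer cubes. The substantive input is Alexander--Hirschowitz; dispensing with it would require an ad hoc construction (for instance via apolarity or induction on $v$), and that would be the main obstacle to a self-contained elementary argument.
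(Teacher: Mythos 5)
Your proof is correct, but it takes a heavier route than the paper's and overestimates what is needed. You invoke the Alexander--Hirschowitz theorem to pin down the exact dimensions $\dim\sigma_r=\min(rv,\binom{v+2}{3})$, handle the defective case $(v,r)=(5,7)$ separately, and then argue that a very general point of the image of the parametrization lies in $\sigma_r\setminus\sigma_{r-1}$. (That last step is fine but deserves a word: the image is constructible with closure $\sigma_r$, hence contains a dense open subset, which meets the complement of the proper closed set $\sigma_{r-1}$; points there are honest sums of $r$ cubes, not just border-rank-$r$.) The paper, by contrast, never needs a \emph{lower} bound on $\dim\sigma_r$ and so never needs Alexander--Hirschowitz. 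It lets $w(v)$ be the maximum Waring rank in $v$ variables and proves by a simple downward induction that every intermediate rank $r\le w(v)$ is attained: if every sum of $r$ cubes collapsed to a sum of $r-1$ cubes, then every sum of $r+1$ cubes would collapse to a sum of $r$ cubes, contradicting maximality of $w(v)$. The only dimension input is the trivial upper bound $\dim\sigma_r\le rv$, which gives $w(v)\ge\binom{v+2}{3}/v$. So the paper's argument is entirely elementary; your closing remark that avoiding Alexander--Hirschowitz would require some ad hoc apolarity construction misses this much lighter alternative. Your route does give finer information (exact generic rank, irreducibility and dimensions of the secant varieties), which may be worth having, but it is not needed for the statement.
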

\begin{proof}
  For $v \geq 2$, let $w(v)$ be the maximum Waring rank
  of any homogeneous polynomial of degree 3 in $v$ variables.
 We claim that for every $r \leq w(v)$  there exists a degree 3 homogeneous polynomial
  in~$v$ variables which can be written as a sum of $r$ cubes of linear forms,  but not as a sum of fewer cubes. This is easily proved by downward induction on~$r$. Indeed, this property holds true for $r=w(v)$ by definition of
  the maximum rank. For the induction step, assume by contradiction
  that for some $r < w(v)$ every sum of $r$ cubes can be rewritten
  as a sum of $r-1$ cubes. Then we could also rewrite every sum of $r+1$ cubes
  as a sum of $r$ cubes (by rewriting the sum of the first $r$ cubes as a sum
  of $r-1$ cubes).

  Thus it remains to show that $w(v) \geq \lceil (v+1)(v+2)/6 \rceil$.
  This follows from a simple dimension count. Indeed, $\cc[x_1,\ldots,x_v]_3$
  is of dimension $\binom{v+2}{3}$ but the set of polynomials 
   that can be expressed as a sum of $r$ cubes
   is of dimension\footnote{as a constructible subset
     of  $\cc[x_1,\ldots,x_v]_3$} at most $rv$ since we have $v$ parameters (the coefficients of the corresponding linear function) for each cube.
  Hence $w(v) \geq \binom{v+2}{3} / v$.
\end{proof}
\begin{remark}
  The above dimension count shows 
  that  $\lceil (v+1)(v+2)/6 \rceil$
  is a lower bound on the Waring rank of a {\em generic} polynomial
  of $\cc[x_1,\ldots,x_v]_3$. Moreover, it is known (but much harder to prove)
  that this is
  the exact value of the generic Waring rank, except for
  $v=5$ where the generic rank is 8 instead of 7.
  This is the degree 3 case of the celebrated
  Alexander-Hirschowitz theorem~\cite{alexander95,brambilla08}, which
  determines the exact value of the generic Waring rank for any number
  of variables and any degree.
  The exact value of the maximum Waring rank does not seem to be known in
  general, but it is never more than
  twice the generic rank~\cite{blekherman15}.
\end{remark}
For the proof of Theorem~\ref{outclosure}
we also need the notion of {\em border Waring rank}.
\begin{definition}
  A polynomial $f \in \cc[X_1,\ldots,X_n]_3$ is of border Waring rank
  at most $r$ if there is a sequence $(f_k)_{k \geq 0}$ of polynomials
  of $\cc[X_1,\ldots,X_n]_3$ such that $f=\lim_{k \rightarrow +\infty} f_k$
   and each $f_k$ is of Waring rank at most $r$.
\end{definition}
In other words, $f$ is of border Waring rank
at most $r$ if it is in the closure of the set of polynomials of Waring rank
at most $r$.
\begin{lemma} \label{highborder}
  For any integer $v$ there exists a polynomial $f \in \cc[X_1,\ldots,X_v]_3$
  of border Waring rank at least $\lceil (v+1)(v+2)/6 \rceil$.
\end{lemma}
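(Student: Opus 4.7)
The plan is to mimic the dimension count used in the proof of Lemma~\ref{ah}, but now applied to the closure of the set of bounded-rank polynomials rather than to the set itself. The key observation is that passing to the Zariski closure of a constructible set does not increase its dimension, so an upper bound on the dimension of the rank-$r$ locus automatically gives an upper bound on the dimension of the border-rank-$r$ locus.

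More precisely, I would proceed as follows. Let $R_r \subseteq \cc[X_1,\ldots,X_v]_3$ denote the set of polynomials of Waring rank at most $r$. This set is the image of the polynomial map $(\cc^v)^r \to \cc[X_1,\ldots,X_v]_3$ sending a tuple $(\ell_1,\ldots,\ell_r)$ of linear forms to $\ell_1^3+\cdots+\ell_r^3$, hence is a constructible subset of dimension at most $rv$. Its Zariski closure $\overline{R_r}$ is, by definition, exactly the set of polynomials of border Waring rank at most $r$, and it has the same dimension as $R_r$, namely at most $rv$.

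Now set $r = \lceil (v+1)(v+2)/6 \rceil - 1$. Then
\[
rv < \frac{(v+1)(v+2)}{6} \cdot v = \binom{v+2}{3} = \dim \cc[X_1,\ldots,X_v]_3,
\]
so $\overline{R_r}$ is a proper closed subvariety of $\cc[X_1,\ldots,X_v]_3$. Consequently there exists a polynomial $f \in \cc[X_1,\ldots,X_v]_3 \setminus \overline{R_r}$, and such an $f$ has border Waring rank strictly greater than $r$, i.e., at least $r+1 = \lceil (v+1)(v+2)/6 \rceil$.

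There is no serious obstacle: the only nontrivial ingredients are Chevalley's theorem (to see that $R_r$ is constructible) and the fact that the Zariski closure of a constructible set has the same dimension. Both are standard, and no computation is required beyond the inequality $rv < \binom{v+2}{3}$. In effect, the lemma says that the generic polynomial already has high border Waring rank, which is a weaker (and much easier) statement than the Alexander--Hirschowitz theorem mentioned in the preceding remark.
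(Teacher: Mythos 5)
Your proof is correct and follows essentially the same dimension-count argument as the paper: bound the dimension of the Waring-rank-$\le r$ locus by $rv$, observe that passing to the closure does not increase dimension, and conclude that for $r < (v+1)(v+2)/6$ the closure cannot fill all of $\cc[X_1,\ldots,X_v]_3$. The only cosmetic difference is that you spell out the parametrization map and invoke Chevalley explicitly, whereas the paper simply refers back to the count already done in Lemma~\ref{ah}; also, since the paper defines border rank via Euclidean limits, one should note the standard fact that the Euclidean and Zariski closures of a constructible set over $\cc$ coincide, which you use implicitly in the phrase ``by definition.''
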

\begin{proof}
  We have seen in the proof of Lemma~\ref{ah} that if $r <  (v+1)(v+2)/6$,
  the set of polynomials of Waring rank at most $r$ is of dimension
  less than $\dim \cc[X_1,\ldots,X_v]_3$.
  Taking the closure does not increase the dimension, so the closure remains
  of dimension less than $\dim \cc[X_1,\ldots,X_v]_3$ and there must
  exist a polynomial in its complement.
  \end{proof}
  The following remark will be useful for the proof of Theorem \ref{outclosure}.
  \begin{remark} \label{morevars}
  For any $n > w$ we can view any polynomial $f  \in \cc[X_1,\ldots,X_w]_3$  as a polynomial in $ \cc[X_1,\ldots,X_n]_3$ which does not depend on its last $n-w$ variables, and this does not change the border Waring rank of $f$. 
  Let us indeed denote by $r_w$ (respectively, $r_n$) the border Waring rank of $f$ when viewed as a polynomial in $w$ (respectively, $n$) variables.
  By definition of $r_w$ there is a sequence  $(f_k)_{k \geq 0}$ of polynomials in $ \cc[X_1,\ldots,X_w]_3$ 
  of Waring rank at most~$r_w$ such that $f=\lim_{k \rightarrow +\infty} f_k$. The $f_k$ can also be viewed as polynomials in $n$ variables, hence $r_n \leq r_w$. 
Likewise,  there is a sequence $(g_k)_{k \geq 0}$ of polynomials in $ \cc[X_1,\ldots,X_n]_3$ 
  of Waring rank at most $r_n$ such that $f=\lim_{k \rightarrow +\infty} g_k$. 
  The polynomials $f'_k(X_1,\ldots,X_w) = g_k(X_1,\ldots,X_v,0,\ldots,0)$ are of Waring rank at most $r_n$, and  $f=\lim_{k \rightarrow +\infty} f'_k$. Hence $r_w \leq r_n$.
  \end{remark}
 Finally, we need two results involving isotropic vectors.
\begin{proposition} \label{zeroproduct}
  If a symmetric tensor $S$ admits a decomposition of the form
  $$S=\sum_{i=1}^r u_i^{\otimes 3}$$ where the $u_i$
  are pairwise orthogonal nonzero vectors then the slices of $S$
  commute. Moreover, for such a decomposition the two following properties
  are equivalent:
  \begin{itemize}
  \item[(i)] All the $u_i$ are isotropic.
  \item[(ii)] The product of any two (possibly equal) slices of $S$
    is equal to 0.
  \end{itemize}
  As a result if (i) holds then in any other decomposition
  $S=\sum_{i=1}^q v_i^{\otimes 3}$ where the $v_i$
  are pairwise orthogonal nonzero vectors, all the $v_i$ must be isotropic.
 \end{proposition}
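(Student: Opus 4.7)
The plan is to reduce everything to a direct computation of products of slices. Writing the decomposition as $S=\sum_{l=1}^r u_l^{\otimes 3}$, the $k$-th slice is
$$S_k = \sum_{l=1}^r (u_l)_k\, u_l u_l^T.$$
Using the pairwise orthogonality $\langle u_l,u_p\rangle=0$ for $l\neq p$, I would obtain
$$S_k S_m = \sum_{l,p} (u_l)_k (u_p)_m \langle u_l,u_p\rangle\, u_l u_p^T = \sum_{l=1}^r (u_l)_k (u_l)_m \langle u_l,u_l\rangle\, u_l u_l^T.$$
Since this expression is manifestly symmetric in $k$ and $m$, the commutativity of the slices follows at once, taking care of the first assertion.

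For the equivalence of (i) and (ii), the implication (i) $\Rightarrow$ (ii) is immediate: if every $u_l$ is isotropic then $\langle u_l,u_l\rangle=0$, and the displayed identity forces $S_kS_m=0$ for all $k,m$. For the converse I would partition the index set into the isotropic indices $I$ and the non-isotropic ones $N$. The isotropic terms drop out of the identity, leaving
$$S_k S_m = \sum_{l\in N} (u_l)_k (u_l)_m \langle u_l,u_l\rangle\, u_l u_l^T.$$
The crucial observation is that the vectors $\{u_l\}_{l\in N}$ are automatically linearly independent, because a vanishing linear combination $\sum_{l\in N} c_l u_l=0$ paired with any $u_p$ ($p\in N$) yields $c_p \langle u_p,u_p\rangle=0$ and hence $c_p=0$. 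From this the rank-one matrices $\{u_l u_l^T\}_{l\in N}$ are also linearly independent (apply a vanishing combination to each $u_p$, $p\in N$). The hypothesis $S_kS_m=0$ then forces $(u_l)_k (u_l)_m \langle u_l,u_l\rangle=0$ for every $l\in N$ and every $k,m$; picking $k=m$ with $(u_l)_k\neq 0$ gives $\langle u_l,u_l\rangle=0$, contradicting $l\in N$. Hence $N=\emptyset$ and every $u_l$ is isotropic.

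The final assertion is a free corollary of this equivalence: property (ii) is a property of the tensor $S$ itself, independent of any particular decomposition, so if (i) holds for the decomposition $(u_i)$ then $S$ satisfies (ii), and applying the equivalence in the reverse direction to the decomposition $(v_i)$ shows that all the $v_i$ must be isotropic as well. The only point in the argument that requires a bit of care is the possibility that the $u_l$ fail to be linearly independent, since two orthogonal isotropic vectors may be linearly dependent; this potential obstacle is neatly sidestepped because the isotropic indices contribute zero to the key formula for $S_kS_m$, so only the linear independence of the non-isotropic $u_l$ (which comes for free from pairwise orthogonality plus non-isotropy) is ever needed.
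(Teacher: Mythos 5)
Your proposal is correct and follows essentially the same route as the paper: you derive the identical formula $S_k S_m = \sum_l (u_l)_k (u_l)_m \langle u_l,u_l\rangle\, u_l u_l^T$ from pairwise orthogonality, read off commutativity from its symmetry in $k,m$, and observe that $(i)\Rightarrow(ii)$ is immediate. For $(ii)\Rightarrow(i)$ the paper takes a slightly more economical path: given a non-isotropic $u_i$ with $(u_i)_k\neq 0$, it applies the product formula again to compute $S_k^2 u_i = (u_i)_k^2\langle u_i,u_i\rangle^2 u_i$, exhibiting $u_i$ as an eigenvector of $S_k^2$ with nonzero eigenvalue; your version instead first proves linear independence of the rank-one matrices $u_l u_l^T$ over the non-isotropic indices, and then extracts the coefficients from $S_kS_m=0$. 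Both arguments are sound and of comparable length, and your remark at the end — that only the non-isotropic $u_l$ need to be linearly independent, and that this comes for free — is a nice clarification of a point the paper leaves implicit. The handling of the final assertion (that (ii) is decomposition-independent) matches the paper's.
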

\begin{proof}
  The $k$-th slice of a rank one symmetric tensor $u^{\otimes 3}$
  is the matrix $u_k u u^T$, where $u_k$ denotes the $k$-th component of $u$.
  For the tensor $S$ the $k$-th slice $S_k$
  is therefore equal to $\sum_{i=1}^r u_{ik}u_i u_i^T$ where $u_{ik}$ denotes
  the $k$-th component of $u_i$. The product of two slices is therefore given
  by the formula:
  $$S_k S_l = \sum_{i,j = 1}^r u_{ik} u_{jl} u_iu_i^Tu_ju_j^T.$$
  In this expression the products $u_i^Tu_j$ vanish for $i \neq j$
  since the $u_i$ are pairwise
  orthogonal. As a result we obtain
  \begin{equation} \label{sliceproduct}
    S_k S_l = \sum_{j=1}^r u_{jk} u_{jl} (u_j^T u_j) (u_j u_j^T)
  \end{equation}
  and this expression is symmetric in $k$ and $l$.

  Next we show that (i) and (ii) are equivalent.
  Assume first that all the $u_i$ are isotropic. Then $S_k S_l=0$ since
  the coefficients $u_j^T u_j$ in~(\ref{sliceproduct}) are all equal to 0.
  Conversely, assume that some vector $u_i$ is not isotropic.
  Since $u_i \neq 0$, at least one component $u_{ik}$ is nonzero.
  We claim that $u_i$ is an eigenvector of $S_k^2$ associated to a nonzero
  eigenvalue, thereby showing that $S_k^2 \neq 0$.
  Indeed, from~(\ref{sliceproduct}) and from the orthogonality of the $u_j$
  we have
  \begin{equation} \label{eigen}
    S_k^2 u_i = u_{ik}^2 (u_i^T u_i)^2 u_i 
   \end{equation}
  and the eigenvalue $u_{ik}^2 (u_i^T u_i)^2$ is nonzero as promised.
  One can also obtain~(\ref{eigen}) from the observation that
  $u_i$ is an eigenvector of $S_k$ associated to the eigenvalue
  $u_{ik} (u_i^T u_i)$.
 The last part of the proposition is clear: if (i) holds then the product of any two slices of $S$ must be equal to 0 since (i) implies (ii). From the converse
  implication (ii) $\implies$ (i) applied to the decomposition
  $S=\sum_{i=1}^r v_i^{\otimes 3}$ it follows that the $v_i$ are isotropic.
\end{proof}

\begin{proposition} \label{centralizer}
  Let $(v_1,\ldots,v_s)$ be a basis of a totally isotropic subspace~$V$.
 The $s^2$ matrices $v_i v_j^T$ are linearly independent.
  Moreover, the product of any two matrices of the form $uv^T$ with $u,v \in V$
  is equal to 0 (as a result, these matrices span a commutative matrix algebra
  of dimension $s^2$).
  \end{proposition}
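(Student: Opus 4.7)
The plan is to handle the two assertions separately and conclude with the algebra structure.

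For the linear independence of the $s^2$ matrices $v_i v_j^T$, the naive approach of extracting the coefficient $\alpha_{kl}$ from a relation $\sum_{i,j} \alpha_{ij} v_i v_j^T = 0$ by sandwiching between $v_k^T$ and $v_l$ fails immediately, because $V$ being totally isotropic means $\langle v_k, v_i \rangle = 0$ for all $i,k$. To get around this I would invoke Theorem~\ref{supplement}, which furnishes an isotropic supplement $V'$ with basis $(v'_1,\ldots,v'_s)$ satisfying $\langle v_i, v'_j \rangle = \delta_{ij}$. Then sandwiching the relation between $(v'_k)^T$ on the left and $v'_l$ on the right yields
\[
0 = \sum_{i,j} \alpha_{ij}\, \langle v'_k, v_i \rangle \langle v_j, v'_l \rangle = \alpha_{kl},
\]
which gives linear independence.

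For the second claim, I would just compute directly: for any $u,v,u',v' \in V$,
\[
(uv^T)(u'v'^T) = (v^T u')\, u v'^T = \langle v, u' \rangle\, uv'^T = 0,
\]
where the last equality uses that $v, u' \in V$ and $V$ is totally isotropic. In particular $(v_i v_j^T)(v_k v_l^T) = 0$ for all $i,j,k,l$.

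To conclude, the linear span $\mathcal{A}$ of the matrices $uv^T$ with $u,v \in V$ coincides with the linear span of the $s^2$ matrices $v_i v_j^T$, since every $u,v \in V$ can be expanded in the basis $(v_1,\ldots,v_s)$. By the first part, $\dim \mathcal{A} = s^2$. By bilinearity and the product computation above, the product of any two elements of $\mathcal{A}$ is $0$, so $\mathcal{A}$ is closed under multiplication and trivially commutative, giving a commutative (non-unital) matrix subalgebra of dimension $s^2$. The only genuine subtlety is the first part, where one must use the isotropic supplement to build a dual system rather than relying on the bilinear form restricted to $V$ itself.
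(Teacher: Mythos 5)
Your proof is correct and uses the same key idea as the paper: invoking the isotropic supplement $(v'_1,\ldots,v'_s)$ from Theorem~\ref{supplement} to overcome the vanishing of the form on $V$, and the direct computation $(uv^T)(u'v'^T) = \langle v,u'\rangle uv'^T = 0$. The only cosmetic difference is in the linear independence step: the paper multiplies a relation $\sum \alpha_{ij} v_iv_j^T = 0$ by $v'_k$ on the right only, obtaining $\sum_i \alpha_{ik} v_i = 0$ and then appealing to the linear independence of the $v_i$, whereas you sandwich between $(v'_k)^T$ and $v'_l$ to extract $\alpha_{kl}$ directly -- a slightly more symmetric route to the same conclusion.
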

\begin{proof}
  Recall from Theorem \ref{supplement} that there are vectors $v'_1,\ldots,v'_s$ such that $v_j^Tv'_i= \delta_{ij}$ for all $i,j$
and assume that $\sum_{i,j} \alpha_{i,j} v_i v_j^T = 0$. Multiplying this identity by any of the  $v'_k$ on the right shows that $\sum_i \alpha_{i,k} v_i =0$. Since the $v_i$ are linearly independent, $\alpha_{1,k},\ldots,\alpha_{s,k}$ must all be equal to 0.
Since this is true for any $k$ we conclude that the $s^2$ matrices are indeed linearly independent.
Finally, the product of $uv^T$ and $u'v'^T$ where $u,u',v,v' \in V$ is equal to 0
  since $v^T u' = 0$.
\end{proof}

\begin{proof}[Proof of Theorem~\ref{outclosure}]
  Let us denote by $S$ the tensor of the polynomial $f$ to be constructed.
  We will choose $S$ of the form  $\sum_{i=1}^r u_i^{\otimes 3}$
  where the $u_i$ belong to a totally isotropic
  subspace of $\cc^n$.
  By Proposition~\ref{zeroproduct} the slices of such a tensor commute,
  so condition (i) will be satisfied.  Moreover, since the product of any two
  slices is equal to 0 the algebra in condition (ii) reduces to the
  linear subspace spanned by the slices of $S$ and the identity matrix.
  We have seen in the proof of Proposition~\ref{zeroproduct} that the slices
  of $S$ are given by the formula:
  $$S_k=\sum_{i=1}^r u_{ik}u_i u_i^T.$$
  The slices of $S$ are therefore linear combinations of the $r$ matrices
  $u_i u_i^T$, and the coefficients of the linear combinations are the entries
  of the matrix $(u_{ik})_{1 \leq i \leq r, 1 \leq k \leq n}$. This matrix is of
  rank at most $n/2$ since the $u_i$ lie in a totally isotropic subspace.
  As a result the slices span a space of dimension at most $n/2$.
  Taking the identity matrix into account, we conclude that the algebra
  is of dimension at most $1+n/2 \leq n$.
  Regarding condition (iii), observe that the centralizer of the slices $S_1,\ldots,S_n$ contains the centralizer of the matrices $u_i u_i^T$.
  Therefore, if we take the $u_i$ in a totally isotropic subspace of dimension $\lfloor n/2 \rfloor$ it follows from Proposition \ref{centralizer} that the centralizer will be of dimension at least $\lfloor n/2 \rfloor^2$.
  Hence condition (iii) will be satisfied for $n \geq 6$.

  It remains to choose the tensor $S$ so that $f {\not \in} \overline{OW_n(\cc)}$.  Let $v=\lfloor n/2 \rfloor$ and let  $g(x_1,x_3,x_5,\ldots,x_{2v-1})$ be a degree 3 form of border Waring rank at least  $\overline{r}=\lceil (v+1)(v+2)/6 \rceil$ (the existence of $g$ is guaranteed by Lemma~\ref{highborder}).
  Note that $\overline{r} > n$ if $n \geq 21$.
  Let \begin{equation} \label{fromgtof}
  f(x_1,x_2,x_3,\ldots,x_{2v-1},x_{2v})=
g(x_1+ix_2,x_3+ix_4,\ldots,x_{2v-1}+ix_{2v}).
\end{equation}
Note that $f$ and $g$ have same Waring rank and same border Waring rank.
In particular, the border Waring rank of $f$ is greater than $n$ for $n \geq 21$. This shows that $f {\not  \in} \overline{\OW_n(\cc)}$ since an orthogonal Waring decomposition is a particular Waring decomposition of size $n$
(for odd $n$ we apply Remark~\ref{morevars} to $f$ with $w=2v=n-1$).

Let $(e_1,\ldots,e_{2v})$ be the standard basis of $\cc^{2v}$ and let $V$
be the totally isotropic subspace spanned by
$e_1+ie_2,e_3+ie_4,\ldots,e_{2v-1}+ie_{2v}$.
Finally, let $S$ be the tensor of $f$ and let $r$ be the Waring rank of $g$.
From the corresponding decomposition of $g$ and from~(\ref{fromgtof})
we obtain a decomposition  $S = \sum_{i=1}^r u_i^{\otimes 3}$ where the $u_i$
belong to $V$.
Since this subspace is totally  isotropic,
as explained at the beginning of the proof
$f$ will satisfy conditions (i), (ii) and (iii).
\end{proof}

\begin{remark}
  Dimension arguments play an important role in the proof of
  Theorem~\ref{outclosure}, and they are captured by the notion of
  ``border Waring rank.'' These dimension arguments can be written in
  a more concise way without appealing explicitly to the notion of (border)
  rank; see the proof of Theorem~\ref{outclosureot} in the next section
  for the case of ordinary tensors.
\end{remark}

\section{Orthogonal decomposition of ordinary tensors} \label{ordi}

In this section we consider orthogonal decompositions of ordinary
(possibly non symmetric) tensors of order 3.
Compared to the symmetric case, the results are similar but their statements and proofs are somewhat more complicated.
First, we point out that 
for $K=\rr$ 
Definition~\ref{orthodef} agrees with the definition of an ``odeco tensor''
from~\cite{boralevi17}:
\begin{proposition} \label{waringtensor}
  Let $t \in \rr[x_1,\ldots,x_n]_3$ be a trilinear form
  and let $T$ be the corresponding tensor.
  The two following properties are equivalent:
  \begin{itemize}
  \item[(i)] $t$ admits an orthogonal decomposition.
  \item[(ii)] $T$ is odeco~\cite{boralevi17},
    i.e., can be written as
    $$\sum_{i=1}^k u_i \otimes v_i \otimes w_i$$
    where each of the the 3 lists $(u_1,\ldots,u_k)$, $(v_1,\ldots,v_k)$,
    $(w_1,\ldots,w_k)$ is made of $k$ nonzero, pairwise orthogonal vectors
    in $\rr^n$.
  \end{itemize}
\end{proposition}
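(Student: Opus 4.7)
The plan is to mirror the proof of Proposition~\ref{symdefequiv} but to track three independent orthogonal changes of variable instead of one, and to use scaling to absorb the coefficients $\alpha_i$ into the vectors of the odeco decomposition (and conversely, to extract them via normalization).

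First, for the direction (i)$\Rightarrow$(ii), I would assume $t(x,y,z) = g(Ax, By, Cz)$ with $A,B,C$ orthogonal and $g$ as in~(\ref{diagtensor}). Denoting by $a_i$, $b_i$, $c_i$ the rows of $A$, $B$, $C$ respectively, the identity
\begin{equation*}
t(x,y,z) = \sum_{i=1}^n \alpha_i \langle a_i, x\rangle \langle b_i, y\rangle \langle c_i, z\rangle
\end{equation*}
translates into $T = \sum_{i=1}^n \alpha_i\, a_i \otimes b_i \otimes c_i$. I would then discard the indices $i$ with $\alpha_i = 0$, and for the remaining indices absorb $\alpha_i$ into one of the factors, say by setting $u_i = \alpha_i a_i$, $v_i = b_i$, $w_i = c_i$. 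Since the rows of an orthogonal matrix are pairwise orthogonal and nonzero, and since rescaling by a nonzero scalar preserves orthogonality and nonvanishing, the three tuples $(u_i)$, $(v_i)$, $(w_i)$ form pairwise orthogonal families of nonzero vectors in $\rr^n$, giving the desired odeco decomposition.

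For the converse (ii)$\Rightarrow$(i), starting from an odeco decomposition $T = \sum_{i=1}^k u_i \otimes v_i \otimes w_i$, I would normalize each factor: set $\tilde u_i = u_i / \|u_i\|$, $\tilde v_i = v_i / \|v_i\|$, $\tilde w_i = w_i / \|w_i\|$ and $\alpha_i = \|u_i\| \|v_i\| \|w_i\|$. This yields $T = \sum_{i=1}^k \alpha_i\, \tilde u_i \otimes \tilde v_i \otimes \tilde w_i$ with each of the three lists now orthonormal of length $k \le n$. Then I would extend each orthonormal list to an orthonormal basis of $\rr^n$ (in the real case this is immediate by Gram--Schmidt), obtaining orthogonal matrices $A$, $B$, $C$ whose first $k$ rows are $\tilde u_i$, $\tilde v_i$, $\tilde w_i$. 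Setting $\alpha_{k+1} = \cdots = \alpha_n = 0$ and $g(x,y,z) = \sum_{i=1}^n \alpha_i x_i y_i z_i$, the identity $t(x,y,z) = g(Ax, By, Cz)$ follows from the expansion of $g$ evaluated on the rows.

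There is no real obstacle here: both implications reduce to elementary manipulations over $\rr$, and the only subtle point (which fails in the complex setting, cf.\ the discussion after Proposition~\ref{symdefequiv}) is the normalization step, which is unproblematic for real nonzero vectors since $\|v\| \neq 0$ whenever $v \neq 0$. The main contrast with the symmetric case is that we now have three independent orthogonal matrices and three independent orthonormal extensions, but the $\pm$ sign ambiguity that appears in Proposition~\ref{symdefequiv} does not appear here since a cube can have any real value while the coefficient $\alpha_i$ in~(\ref{diagtensor}) is unconstrained.
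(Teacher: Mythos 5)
Your proof is correct, and it follows the same route the paper indicates: the paper explicitly skips this proof, saying it is ``essentially the same as for Proposition~\ref{symdefequiv},'' and you have simply carried out that adaptation with three independent orthogonal changes of variable, absorbing the scalar into one factor in (i)$\Rightarrow$(ii) and normalizing and extending each of the three orthonormal families to a basis in (ii)$\Rightarrow$(i).
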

This proposition could be extended to multilinear forms of degree $d$ and ordinary tensors of order $d$. We skip the proof, which is essentially the same
as for Proposition~\ref{symdefequiv}.
Like in the symmetric case the equivalence of (i) and (ii) fails over the field
of complex numbers due to the existence of isotropic vectors,
see Example~\ref{borderexordi} in Section~\ref{closure}.
Over $\cc$ we will therefore have again two different notions of
``orthogonal decomposition.'' The corresponding classes of tensors are denoted $\OT_n(\cc)$ and $\ODECO_n(\cc)$ (the latter defined in Section~\ref{closure}). These are the analogues of the classes $\OW_n(\cc)$ and $\SODECO_n(\cc)$ studied in Section~\ref{sec:waring}.

\begin{proposition} \label{slices}
  Let $f(x,y,z)=g(Ax,By,Cz)$ where $g$ is a trilinear form with slices
  $\frac{\partial g}{\partial z_i} = x^TS_iy$. The slices of $f$ are given
  by the formula: $$\frac{\partial f}{\partial z_k} = x^TT_ky$$ where
  $T_k = A^TD_kB$, $D_k=\sum_{i=1}^n c_{ik}S_i$ and the $c_{ik}$ are the entries
  of $C$.

  In particular, if $g$ is as in~(\ref{diagtensor})
  we have 
  $D_k = \diag(\alpha_1 c_{1k},\ldots,\alpha_n c_{nk})$.
\end{proposition}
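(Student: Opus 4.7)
The plan is to prove Proposition~\ref{slices} by a direct chain-rule computation, since each $z$-slice is defined as a partial derivative with respect to a $z$-variable. The overall scheme mirrors the derivation of formula~(\ref{simslice}) from Fact~\ref{hessianfact}, but is even simpler because we only differentiate once rather than twice.

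First I would compute $\partial f / \partial z_k$ via the chain rule. Writing the $i$-th coordinate of $Cz$ as $(Cz)_i = \sum_{j=1}^n c_{ij} z_j$, the only dependence of $f(x,y,z) = g(Ax, By, Cz)$ on $z_k$ is through the arguments $(Cz)_1, \ldots, (Cz)_n$ of $g$. Since $\partial (Cz)_i / \partial z_k = c_{ik}$, the chain rule yields
\begin{equation*}
\frac{\partial f}{\partial z_k}(x,y,z) = \sum_{i=1}^n c_{ik} \, \frac{\partial g}{\partial z_i}(Ax, By, Cz).
\end{equation*}
By hypothesis $\partial g / \partial z_i = x^T S_i y$, so $\partial g / \partial z_i$ evaluated at $(Ax, By, Cz)$ equals $(Ax)^T S_i (By) = x^T (A^T S_i B) y$.

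Next I would collect terms. Substituting back and factoring out $A^T$ on the left and $B$ on the right gives
\begin{equation*}
\frac{\partial f}{\partial z_k} = x^T A^T \!\left(\sum_{i=1}^n c_{ik} S_i\right)\! B\, y = x^T (A^T D_k B)\, y,
\end{equation*}
which identifies the $k$-th $z$-slice of $f$ as $T_k = A^T D_k B$ with $D_k = \sum_i c_{ik} S_i$, as claimed.

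Finally, for the special case in which $g$ is the diagonal trilinear form~(\ref{diagtensor}), I would observe that $\partial g / \partial z_i = \alpha_i x_i y_i$, so the corresponding slice matrix is $S_i = \alpha_i e_i e_i^T$ (the matrix whose only nonzero entry is $\alpha_i$ in position $(i,i)$). Then
\begin{equation*}
D_k = \sum_{i=1}^n c_{ik}\, \alpha_i e_i e_i^T = \diag(\alpha_1 c_{1k}, \ldots, \alpha_n c_{nk}),
\end{equation*}
yielding the desired formula. There is no real obstacle here; the only thing to watch is the placement of transposes, which is dictated by the fact that $g$'s slices are the matrices of its bilinear forms in the $(x,y)$-variables, and that applying $A$ to $x$ replaces $x^T$ by $x^T A^T$ while applying $B$ to $y$ replaces $y$ by $By$.
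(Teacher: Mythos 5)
Your proof is correct and follows essentially the same route as the paper's: a one-step chain-rule computation to express $\partial f/\partial z_k$ as $\sum_i c_{ik}\,\partial g/\partial z_i$ evaluated at $(Ax,By,Cz)$, followed by substitution of the slice formula and the obvious specialization to the diagonal case. You simply spell out a couple of intermediate steps the paper leaves implicit.
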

\begin{proof}
  Differentiating the expression $f(x,y,z)=g(Ax,By,Cz)$
  shows that
  $$\frac{\partial f}{\partial z_k} =
  \sum_{i=1}^n c_{ik} \frac{\partial g}{\partial z_i}(Ax,By,Cz),$$
  and the result follows by plugging the expression
  $\frac{\partial g}{\partial z_i} = x^TS_iy$ into this formula.

  In the case where  $g$ is as in~(\ref{diagtensor}),
  $S_i$ is the diagonal matrix
  with an entry equal to $\alpha_i$ at row $i$ and column $i$,
  and zeroes elsewhere. As a result
  $D_k=\diag(\alpha_1 c_{1k},\ldots,\alpha_n c_{nk})$.
\end{proof}
Note that the above proof applies to arbitrary matrices $A,B,C$ (orthogonality is not used).
\begin{corollary} \label{cor:slices}
  If a tensor $T \in K^{n \times n \times n}$ admits an orthogonal decompostion,
  its $n$ slices $T_1,\ldots,T_n$ satisfy the following conditions:
  for $1 \leq k,l \leq n$, the matrices $T_k T_l^T$ (respectively, $T_k^T T_l$)
  are symmetric and pairwise commute.
  \end{corollary}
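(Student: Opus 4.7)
The plan is to use Proposition~\ref{slices} to express the slices in a very rigid form, and then verify symmetry and commutativity by direct computation, exploiting the fact that diagonal matrices are symmetric and commute pairwise.

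First I would invoke Proposition~\ref{slices}: since $T$ admits an orthogonal decomposition, we can write $T_k = A^T D_k B$ for each $z$-slice, where $A, B$ are orthogonal matrices (so $A A^T = B B^T = \mathrm{Id}$) and each $D_k$ is a diagonal matrix (hence symmetric and diagonal matrices commute among themselves).

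Next, for any $k, l$, I would compute
\begin{equation*}
T_k T_l^T = (A^T D_k B)(A^T D_l B)^T = A^T D_k B B^T D_l^T A = A^T D_k D_l A,
\end{equation*}
using $B B^T = \mathrm{Id}$ and $D_l^T = D_l$. Similarly,
\begin{equation*}
T_k^T T_l = (A^T D_k B)^T (A^T D_l B) = B^T D_k^T A A^T D_l B = B^T D_k D_l B.
\end{equation*}
Symmetry of both products is then immediate from $(D_k D_l)^T = D_l D_k = D_k D_l$. For pairwise commutativity, given any indices $k,l,p,q$,
\begin{equation*}
(T_k T_l^T)(T_p T_q^T) = A^T D_k D_l A A^T D_p D_q A = A^T (D_k D_l D_p D_q) A,
\end{equation*}
and rearranging the diagonal factors (which all commute) gives $A^T (D_p D_q D_k D_l) A = (T_p T_q^T)(T_k T_l^T)$. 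The same argument with $B$ in place of $A$ handles the family $T_k^T T_l$.

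There is no real obstacle here: the whole corollary follows by a two-line matrix manipulation from Proposition~\ref{slices}, the only facts used being $A A^T = B B^T = \mathrm{Id}$ and the trivial observation that diagonal matrices form a commutative family of symmetric matrices. The structural point worth emphasizing is that although $T_k$ itself need not be symmetric or commute with $T_l$, the ``symmetrized'' combinations $T_k T_l^T$ and $T_k^T T_l$ inherit their good behavior from the diagonal core $D_k D_l$ sandwiched by the orthogonal $A$ (respectively $B$).
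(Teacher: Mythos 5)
Your proof is correct and follows essentially the same route as the paper: invoke Proposition~\ref{slices} to write $T_k = A^T D_k B$, simplify $T_k T_l^T$ and $T_k^T T_l$ using $AA^T = BB^T = \mathrm{Id}$ and the symmetry of the diagonal $D_k$, and read off symmetry and commutativity from the fact that diagonal matrices commute. The only cosmetic difference is that the paper deduces commutativity from the matrices being simultaneously diagonalizable (all of the form $A^T(\cdot)A$ with $A$ orthogonal), whereas you verify it by a direct four-factor computation; both are equally valid and equally short.
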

\begin{proof}
  Let us prove this for the $z$-slices (the same property of course holds
  for slices in the $x$ and $y$ directions) and for the matrices $T_k T_l^T$.
  
  By Proposition~\ref{slices}, $T_k = A^TD_kB$ where $D_k$ is diagonal.
  We have:
  $$T_k T_l^{T}=(A^TD_kB)(B^TD_lA) = A^TD_k D_lA$$
  and this is equal to $T_l T_k^T$ since diagonal matrices commute.
  We have shown that $T_k T_l^{T}$ is symmetric.
The above equality also implies that
these matrices commute since they are simultaneously diagonalizable
(recall that $A^T=A^{-1}$).
The proof for the matrices $T_k^T T_l$ is similar, 
  with the roles of $A$ and  $B$ exchanged.
\end{proof}
\begin{remark} \label{rem:slices}
The proof for $z$-slices uses only the orthogonality of $A$ and $B$
($C$ may be an arbitrary matrix).
It relies on the fact that the slices are
simultaneously orthogonally equivalent to diagonal matrices.
\end{remark}
In the following we will not use explicitly the fact that the matrices
in Corollary~\ref{cor:slices} commute, but we will use the symmetry propery.

\subsection{Orthogonal decomposition of real tensors} \label{sec:realordi}

\begin{theorem}[simultaneous SVD] \label{SSVD}
Let  $T_1,\ldots,T_s$ be real matrices of size $n$. The following properties are equivalent:
\begin{itemize}
\item[(i)]  The matrices $T_k T_l^T$ and $T_k^T T_l$
  are symmetric for all $k,l \in \{1,\ldots,s\}$.
\item[(ii)] The $T_k$ are simultaneously orthogonally equivalent to diagonal matrices, i.e., there exist real orthogonal matrices
$U$ and $V$ such that the matrices  $U^T T_k V$ are all diagonal.
\end{itemize}
\end{theorem}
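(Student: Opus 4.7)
The direction (ii)$\Rightarrow$(i) is immediate: if $T_k = U D_k V^T$ with each $D_k$ diagonal, then $T_k T_l^T = U D_k D_l U^T$ and $T_k^T T_l = V D_k D_l V^T$ are both symmetric. All the work is in (i)$\Rightarrow$(ii), which I would prove by first simultaneously orthogonally diagonalizing the family $\{T_k^T T_l\}$, then showing that the columns of the resulting right basis are carried to a single line by the $T_k$'s.

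For the first step, a clean way to get both symmetry and commutativity at once is to form the symmetric $2n \times 2n$ matrices $M_k = \begin{pmatrix} 0 & T_k \\ T_k^T & 0 \end{pmatrix}$. Then $M_k M_l$ is block-diagonal with diagonal blocks $T_k T_l^T$ and $T_k^T T_l$; condition (i) says exactly that $M_k M_l$ is symmetric, which (since $M_k, M_l$ are themselves symmetric) is equivalent to $M_k M_l = M_l M_k$. Commutativity passes block by block to the $T_k^T T_l$, so $\{T_k^T T_l\}$ is a commuting family of real symmetric matrices. Theorem~\ref{th:simdiag} then yields an orthogonal $V$ with $E_{kl} := V^T T_k^T T_l V$ diagonal for every pair $(k,l)$.

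For the second step, the key is to use the remaining symmetry $T_k T_l^T = T_l T_k^T$ to constrain the images $T_k v$. Decompose $\rr^n$ into joint eigenspaces $W_\alpha$ of $\{T_k^T T_l\}$, labelled by eigenvalue patterns $\alpha = (\alpha_{kl})$. For $v \in W_\alpha$, apply $T_k T_l^T = T_l T_k^T$ to the vector $T_m v$: since $T_l^T T_m v = \alpha_{lm} v$ and $T_k^T T_m v = \alpha_{km} v$, one obtains
\[
\alpha_{lm}\, T_k v \;=\; \alpha_{km}\, T_l v \qquad \text{for all } k,l,m.
\]
If some $T_{k_0} v \neq 0$ then $\alpha_{k_0 k_0} = \|T_{k_0} v\|^2 > 0$, and taking $l = m = k_0$ forces $T_k v = (\alpha_{k k_0}/\alpha_{k_0 k_0})\, T_{k_0} v$. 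Hence $\{T_k v : k = 1,\dots,s\}$ spans a subspace of dimension at most one.

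Finally, I would build $U$ column by column. For each column $v_j$ of $V$, set $u_j = T_{k_0(j)} v_j / \|T_{k_0(j)} v_j\|$ for any $k_0(j)$ with $T_{k_0(j)} v_j \neq 0$ (if such a $k_0(j)$ exists); defer the other columns. The $u_j$'s thus defined are pairwise orthogonal because $\langle T_k v_j, T_l v_{j'} \rangle = (E_{kl})_{jj'} = 0$ for $j \neq j'$ by diagonality of $E_{kl}$, and I can complete to an orthonormal basis by choosing the deferred $u_j$'s arbitrarily. The proportionality from step two gives $T_k v_j \in \rr u_j$ for every $k,j$, so $U^T T_k V$ is diagonal. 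The main obstacle is step two: the simultaneous diagonalization of $\{T_k^T T_l\}$ alone only constrains the Gram matrix of the tuple $(T_k v)_k$ for $v$ in a joint eigenspace, not the vectors themselves. What unlocks the argument is feeding $T_m v$ rather than $v$ into the identity $T_k T_l^T = T_l T_k^T$, so that the eigenspace relation collapses two $T$'s into a scalar and delivers the required proportionality.
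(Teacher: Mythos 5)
Your proof is correct, and it is a genuine addition: the paper does not actually prove the direction (i)$\Rightarrow$(ii) but defers it to references (singular value decomposition for $s=1$, Horn--Johnson exercise 2.6.P4 for $s=2$, and Maehara--Murota, Corollary~9, for general $s$). Your argument is self-contained. The $2n\times 2n$ symmetrization $M_k=\begin{pmatrix}0 & T_k\\ T_k^T & 0\end{pmatrix}$ is a clean device: condition (i) becomes pairwise commutativity of the symmetric $M_k$, and since any two products of commuting matrices commute, the whole family $\{T_k^T T_l\}$ is a commuting family of real symmetric matrices (a fact the paper states only as a \emph{consequence} of the theorem). From there, after orthogonally diagonalizing $\{T_k^T T_l\}$ (Theorem~\ref{th:simdiag}, or just the spectral theorem for a commuting family of real symmetric matrices), the decisive move is applying the symmetry $T_kT_l^T=T_lT_k^T$ to $T_m v$ to obtain the scalar relations $\alpha_{lm}T_kv=\alpha_{km}T_lv$, which collapse the images $T_kv$ to a single line; taking $l=m=k_0$ with $T_{k_0}v\neq0$ gives $\alpha_{k_0k_0}=\|T_{k_0}v\|^2>0$ and hence the explicit proportionality. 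The column-by-column assembly of $U$, with the orthogonality $u_j \perp u_{j'}$ coming directly from the vanishing off-diagonal entries of $E_{k_0(j)\,k_0(j')}$ and the deferred columns completed arbitrarily to an orthonormal basis, mirrors the usual SVD construction and correctly yields $U^TT_kV$ diagonal. All steps check out.
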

We have seen in the proof of Corollary~\ref{cor:slices} that (ii) implies (i).
As to the converse, for $k=1$ the matrix $T_1$ is an arbitrary real matrix and the required decomposition
is the singular value decomposition of $T_1$.
For $k=2$ this is exercise  2.6.P4 in~\cite{horn13}, and the general case
is Corollary~9 in~\cite{maehara11}.
This theorem implies in particular that if the matrices $T_k T_l^T$,
$T_k^T T_l$ are all symmetric then the matrices $T_k T_l^T$ must commute
(and the $T_k^T T_l$ commute as well).
\begin{proposition} \label{partialortho}
  Let $T$ be a real tensor of order 3.
  The corresponding trilinear form $f(x,y,z)$ admits a decomposition
  of the form $f(x,y,z) = g(Ax,By,Cz)$ with $A,B$ orthogonal
  and $g$ as in~(\ref{diagtensor}) iff the $z$-slices of $T$ satisfy
  the conditions of Theorem~\ref{SSVD}.(i): $T_k T_l^T$ and $T_k^T T_l$
  are symmetric for all $k,l \in \{1,\ldots,n\}$.
\end{proposition}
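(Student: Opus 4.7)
The plan is to deduce the equivalence directly from the simultaneous SVD (Theorem~\ref{SSVD}) and Proposition~\ref{slices}, treating the two directions separately.

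For the forward direction I assume $f(x,y,z)=g(Ax,By,Cz)$ with $A,B$ orthogonal and $g$ as in~(\ref{diagtensor}). By Proposition~\ref{slices} the $z$-slices of $f$ are $T_k = A^T D_k B$ with $D_k$ diagonal, so $T_k T_l^T = A^T D_k D_l A$ and $T_k^T T_l = B^T D_k D_l B$ are both symmetric. This is exactly the $z$-slice case of Corollary~\ref{cor:slices}, with the orthogonality requirement on $C$ dropped as pointed out in Remark~\ref{rem:slices}.

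For the backward direction I apply Theorem~\ref{SSVD} to $T_1,\ldots,T_n$: the symmetry hypotheses give real orthogonal matrices $U,V$ and diagonal matrices $D_1,\ldots,D_n$ such that $T_k=UD_kV^T$. Setting $A=U^T$ and $B=V^T$ yields the two orthogonal factors. For the remaining data I choose $g(x,y,z)=\sum_{i=1}^n x_iy_iz_i$ (that is, $\alpha_i=1$ for all $i$) and take $C$ to be the (not necessarily orthogonal) matrix whose $(i,k)$-entry is $(D_k)_{ii}$. The second part of Proposition~\ref{slices} then says that the $z$-slices of $g(Ax,By,Cz)$ are $A^T\diag(c_{1k},\ldots,c_{nk})B = A^T D_k B = T_k$, and since a trilinear form is determined by its $z$-slices I conclude $f(x,y,z)=g(Ax,By,Cz)$.

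The main technical content is packaged inside Theorem~\ref{SSVD}; everything around it is bookkeeping. I do not expect a significant obstacle, since the relaxation that $C$ need not be orthogonal provides precisely the flexibility needed to absorb the diagonal entries $(D_k)_{ii}$ into the columns of $C$. The one small point worth checking is that the trilinear form is indeed fully recovered from its $z$-slices, which is what justifies the final identification $f(x,y,z)=g(Ax,By,Cz)$.
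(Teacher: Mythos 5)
Your proof is correct and takes essentially the same route as the paper: both directions rest on Theorem~\ref{SSVD} together with Proposition~\ref{slices}, and both arrive at the same matrix $C$ with entries $c_{ik}=(D_k)_{ii}$. The only cosmetic difference is that you build $g(Ax,By,Cz)$ directly and check that its $z$-slices equal $T_k$ (using the fact that a trilinear form is determined by its $z$-slices), whereas the paper passes through the auxiliary form $h(x,y,z)=f(Ux,Vy,z)$, observes it has diagonal $z$-slices and hence the shape $\sum_i x_i y_i c_i(z)$, and then undoes the substitution.
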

\begin{proof}
If $f(x,y,z) = g(Ax,By,Cz)$ with $A,B$ orthogonal
and $g$ as in~(\ref{diagtensor}),
we have already seen (see Remark~\ref{rem:slices})
that the matrices $T_k T_l^T$ and $T_k^T T_l$
are symmetric.

Conversely, if these matrices are symmetric then by Theorem~\ref{SSVD}
there are real orthogonal matrices $U,V$ such that the matrices
$U^T T_k V$ are all diagonal. Let $h(x,y,z)=f(Ux,Vy,z)$.
By Proposition~\ref{slices} the $z$-slices of $h$ are diagonal, i.e.,
we have $$\frac{\partial h}{\partial z_k} = x^TD_ky$$
where $D_k$ is a diagonal matrix.
This implies that the trilinear form  $h$ can be written as
\begin{equation} \label{ABId}
  h(x,y,z)=\sum_{i=1}^n x_i y_i c_i(z)
\end{equation}
where the $c_i$ are linear forms
(indeed, the presence of any cross-product $x_i y_jz_k$ with $i \neq j$ in $h$
would give rise to a non-diagonal entry in $D_k$).
We have shown that that $h(x,y,z)=g(x,y,Cz)$ where
$g(x,y,z)=\sum_{i=1}^n x_i y_i z_i$ and $C$ is the matrix with the $c_{ik}$
as entries. As a result we have $f(x,y,z)=h(U^Tx,V^Ty,z)=g(U^Tx,V^Ty,Cz).$
\end{proof}

The next result is the main result of Section~\ref{sec:realordi}.
In particular, we recover
the result from~\cite{boralevi17} that the set of real tensors admitting
an orthogonal decomposition can be defined by equations of degree 2.
\begin{theorem} \label{realortho}
  For a real tensor $T$ of order 3, the following properties are equivalent:
  \begin{itemize}
  \item[(i)] $T$ admits an orthogonal decomposition.
  \item[(ii)] The $x$, $y$ and $z$ slices of $T$ satisfy
  the conditions of Theorem~\ref{SSVD}.(i).
    \end{itemize}
\end{theorem}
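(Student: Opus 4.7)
The easy direction (i) $\Rightarrow$ (ii) follows from Corollary~\ref{cor:slices}: applied to the $z$-slices it yields the required symmetry of $T_kT_l^T$ and $T_k^TT_l$, and since the three factors play symmetric roles in Definition~\ref{orthodef} the same conclusion holds for the $x$- and $y$-slices.

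For (ii) $\Rightarrow$ (i), my plan is to first partially orthogonalize using Proposition~\ref{partialortho}, and then to force the remaining matrix to be orthogonal using the hypothesis on a second family of slices. Applying Proposition~\ref{partialortho} (whose proof uses only the $z$-slice hypothesis) yields
$$f(x,y,z) = g(U^Tx,\, V^Ty,\, Cz),$$
with $U,V$ real orthogonal, $g(x,y,z) = \sum_{i=1}^n x_iy_iz_i$, and $C$ some real $n\times n$ matrix. It remains to prove that $C$ may be taken orthogonal.

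Set $\tilde f(x,y,z) = g(x,y,Cz)$, so that $f(x,y,z) = \tilde f(U^Tx,V^Ty,z)$. A direct calculation gives the rank-one formula $\tilde X_k = e_kc_k^T$ for the $k$-th $x$-slice of $\tilde f$, where $c_k$ denotes the $k$-th row of $C$. A routine check of how $x$-slices transform under orthogonal changes of variable in $x$ and $y$ shows that the symmetry of $X_kX_l^T$ for the $x$-slices $X_k$ of $f$, which holds by hypothesis, transfers to the symmetry of $\tilde X_k\tilde X_l^T$ for all $k,l$. Since $\tilde X_k\tilde X_l^T = (c_k^Tc_l)\,e_ke_l^T$, this symmetry for $k\neq l$ forces $c_k^Tc_l=0$; that is, the rows of $C$ are pairwise orthogonal.

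To conclude, factor $C=\Lambda W$ where $\Lambda=\diag(\lambda_1,\ldots,\lambda_n)$ records the norms $\lambda_k=\|c_k\|$ and $W$ is the orthogonal matrix whose $k$-th row is $c_k/\lambda_k$ when $\lambda_k>0$, completed to an orthonormal basis on the remaining indices. Then $\tilde f(x,y,z) = g'(x,y,Wz)$ with $g' = \sum_i \lambda_i x_iy_iz_i$ of the form~(\ref{diagtensor}), and therefore $f(x,y,z) = g'(U^Tx,V^Ty,Wz)$ is an orthogonal decomposition. The main obstacle is the invariance check for the $x$-slice condition under orthogonal substitutions in $x$ and $y$; once that is in place, the rank-one structure of the $\tilde X_k$ makes the remainder essentially forced. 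Note that only the conditions on the $x$- and $z$-slices are used in the proof; the $y$-slice condition follows \emph{a posteriori} from the conclusion via the easy direction.
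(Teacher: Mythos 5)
Your proof is correct and takes essentially the same route as the paper: invoke Proposition~\ref{partialortho} (which needs only the $z$-slice hypothesis) to reach the form $f(x,y,z)=g(U^Tx,V^Ty,Cz)$, show the $x$-slices of $\tilde f=g(x,y,Cz)$ are the rank-one matrices $e_kc_k^T$, use the transferred symmetry of $\tilde X_k\tilde X_l^T$ to force orthogonality of the rows of $C$, and finish by factoring $C=\Lambda W$. The ``routine check'' you invoke is exactly what the paper packages as Lemma~\ref{lem:realortho}, and the observation that the $y$-slice hypothesis is never used is a nice bonus but is implicit in the paper's argument as well.
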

For instance, if we denote by $Z_1,\ldots,Z_n$ the $z$-slices of $T$,
the matrices $Z_k Z_l^T$ and $Z_k^T Z_l$ must all be symmetric (and likewise
for the $x$ and $y$ slices).
Toward the proof of this theorem we need the following lemma.
\begin{lemma} \label{lem:realortho}
  Let $f(x,y,z)$ be a real trilinear form and let $A,B,C$ be orthogonal
  matrices. If the $x$, $y$ and $z$ slices of  $f$ satisfy the conditions
  of Theorem~\ref{SSVD}.(i) then the same is true of the form
  $h(x,y,z)=f(Ax,By,Cz)$.
\end{lemma}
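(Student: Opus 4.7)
The plan is to apply Proposition~\ref{slices} in each of the three slice directions to reduce the claim to two elementary facts: that a real linear combination of symmetric matrices is symmetric, and that the map $M \mapsto P^T M P$ preserves symmetry for any matrix $P$.

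I would begin with the $z$-slices. Let $Z_1,\ldots,Z_n$ denote the $z$-slices of $f$. Proposition~\ref{slices}, applied with $h$ in the role of ``$f$'' and $f$ in the role of ``$g$'', expresses the $z$-slices of $h$ as $H_k = A^T D_k B$ with $D_k = \sum_i c_{ik} Z_i$. The orthogonality relation $BB^T = I$ collapses the middle factor in $H_k H_l^T$, so a direct computation gives
$$H_k H_l^T = A^T\!\left(\sum_{i,j} c_{ik} c_{jl}\, Z_i Z_j^T\right)\! A.$$
By hypothesis each $Z_i Z_j^T$ is symmetric, so the parenthesized sum is a real combination of symmetric matrices, hence symmetric, and conjugation by $A^T$ preserves that. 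An entirely analogous identity $H_k^T H_l = B^T(\sum_{i,j} c_{ik} c_{jl}\, Z_i^T Z_j) B$ handles the second family of products.

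For the $x$- and $y$-slices I would invoke the manifest symmetry of the construction. Writing out the coefficients explicitly,
$$H_{ijk} = \sum_{p,q,r} T_{pqr} A_{pi} B_{qj} C_{rk},$$
one sees that interchanging the roles of the three variables and of the three orthogonal matrices produces slice formulas of exactly the same shape; concretely, the $x$-slices of $h$ have the form $B^T E_i C$ with $E_i = \sum_p a_{pi} X_p$ (and similarly for $y$). The same bookkeeping as above then yields that the products $H_i^x (H_j^x)^T$ and $(H_i^x)^T H_j^x$ are symmetric, using now the orthogonality of $B$ and $C$; and likewise for the $y$-slices.

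I do not anticipate a serious obstacle: everything comes down to Proposition~\ref{slices} together with the orthogonality of two of the three matrices at each step. The only mild subtlety worth flagging is that Proposition~\ref{slices} is stated for $z$-slices only, so I would either silently invoke the obvious relabelling symmetry or, if one prefers, re-prove the $x$- and $y$-versions by the same one-line differentiation argument.
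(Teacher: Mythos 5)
Your proof is correct and rests on the same key ingredients as the paper's: Proposition~\ref{slices} to express the slices of $h$, the observation that a linear combination of symmetric matrices is symmetric, and the fact that conjugation by an orthogonal matrix preserves symmetry. The one organizational difference is that the paper first reduces to a single change of variables (e.g.\ $h(x,y,z)=f(Ax,y,z)$), which decouples the two phenomena — for $z$-slices one only needs left-multiplication/conjugation, and for $x$-slices one only needs linear combinations — whereas you handle the full transformation $f(Ax,By,Cz)$ in one pass, so each slice direction exhibits both phenomena simultaneously. Both routes are fine; yours is marginally more direct at the cost of slightly heavier bookkeeping in each case, and your note that Proposition~\ref{slices} needs an (obvious) relabelling for the $x$- and $y$-directions correctly flags the only small gap to fill.
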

\begin{proof} We can obtain obtain $h$ from $f$ in 3 steps: first perform
  the linear transformation on the $x$ variables, then on the $y$ variables
  and finally on the $z$ variables. It is therefore sufficient to
  prove the lemma for 
  $h(x,y,z)=f(Ax,y,z)$.
  Let $Z_1,\ldots,Z_n$  be the $z$-slices of $f$ and $Z'_1,\ldots,Z'_n$
  those of $h$. By Proposition~\ref{slices} we have $Z'_k=A^TZ_k$ so that
  $Z'_k {Z'_l}^T = A^T (Z_k Z_l^T) A$. This matrix is symmetric since $Z_k Z_l^T$
  is symmetric, and the same is true of  ${Z'_k}^T Z'_l = Z_k^TAA^TZ_l=Z_k^TZ_l$.
  The $z$-slices of $h$ therefore satisfy the hypotheses of
  Theorem~\ref{SSVD}.(i), and a similar argument applies to the $y$-slices.

  Finally, the $x$-slices of $h$ are $X'_k=\sum_{i=1}^n a_{ik}X_i$ where $X_1,\ldots,X_n$
  are the $x$-slices of $f$. As a result,
  $$X'_k{X'_l}^T = \sum_{i,j=1}^n a_{ik}a_{jl}X_iX_j^T.$$
  This matrix is symmetric since the matrices $X_i X_j^T$ are symmetric by
  hypothesis. A similar computation shows that ${X'_k}^T{X'_l}$ is symmetric
  as well. We have therefore shown that all slices of $h$ satisfy
  the hypotheses of Theorem~\ref{SSVD}.(i).
\end{proof}

\begin{proof}[Proof of Theorem~\ref{realortho}]
  If $T$ admits an orthogonal decomposition, Proposition~\ref{partialortho}
  shows that the $x$, $y$ and $z$ slices satisfy
  the conditions of Theorem~\ref{SSVD}.(i).
 For the converse 
 we begin with a special case:  let us assume that the trilinear form $f$
 associated to $T$ has the same form
  as $h$ in~(\ref{ABId}), i.e.,
  \begin{equation} \label{fABId}
  f(x,y,z)=\sum_{i=1}^n x_i y_i c_i(z)
  \end{equation}
  where the $c_i$ are linear forms.
  Differentiating this expression shows that
  $$\frac{\partial f}{\partial x_k} = y_k c_k(z)$$
  and the $x$-slices of $f$ are therefore the rank-one matrices $X_k = D_k C$
  where $C$ is the matrix with the $c_{ij}$ as entries, and $D_k$ is the diagonal matrix with an entry equal to 1 at row $k$ and column $k$,
  and zeroes elsewhere. By hypothesis the matrix $X_kX_l^T=D_k C C^T D_l$
  must be symmetric. But this is a matrix with at most one nonzero entry,
  located at row $k$ and column $l$. We conclude that $D_k C C^T D_l=0$
  for $k \neq l$, i.e., any two distinct rows of $C$ are orthogonal.
  Normalizing the rows of $C$, we can write
  $$f(x,y,z)=\sum_{i=1}^n \alpha_i x_i y_i c'_i(z)$$
  where $C'$ is an orthogonal matrix.
  Note that the rows of $C$ that are identically zero require a special
  treatment since they cannot be normalized.
  If there is one such row we replace it in $C'$ by a unit vector
  that is orthogonal to the $n-1$ other rows of $C$ (there are 2 choices)
  and the corresponding coefficient $\alpha_i$ is set to 0.
  More generally, if there are several null rows we pick an orthonormal
  basis of the orthogonal complement of the span of the rows of $C$,
  and all the corresponding coefficient $\alpha_i$ are set to 0.
  
  We have therefore shown that $f(x,y,z)=g(x,y,C'z)$
  where $g$ is as in~(\ref{diagtensor}), i.e., we have obtained an orthogonal
  decomposition of $f$.

  The last step of the proof is  a reduction from the general case
  to~(\ref{fABId}).
  Let $T$ be any tensor satisfying property (ii)
  in the statement of the Theorem,
  and let $f$ be the corresponding trilinear form.
  We have seen in the proof of Proposition~\ref{partialortho}
  that there are two orthogonal matrices $U,V$ such that the form
  $h(x,y,z)=f(Ux,Vy,z)$ is 
  as in~(\ref{ABId}).
  By Lemma~\ref{lem:realortho}, $h$ also satisfies the hypotheses of
  Theorem~\ref{realortho} and we have therefore shown in the previous step
  of the proof that $h$ admits an orthogonal decomposition.
  The same is true of $f$ since $f(x,y,z)=h(U^Tx,V^Ty,z)$.
\end{proof}

\subsection{Orthogonal decomposition of complex tensors} \label{sec:cordi}

The following result of Choudhury and Horn~\cite{chou87} is an analogue
of Theorem~\ref{SSVD} for
complex orthogonal equivalence.
We state it for square matrices because that is sufficient for our purposes,
but it can be generalized to rectangular matrices
(see~\cite{chou87} for details).
\begin{theorem}[simultaneous diagonalization by complex orthogonal equivalence]
  \label{SDCOE} Let  $T_1,\ldots,T_s$ be complex matrices of size $n$.
  The $T_k$ are simultaneously orthogonally equivalent to diagonal matrices
  (i.e., there exist complex orthogonal matrices
  $U$ and $V$ such that the matrices  $U^T T_k V$ are all diagonal) if and only
  if the following conditions are satisfied:
  \begin{itemize}
  \item[(i)] For each $k$, $T_k^T T_k$ is diagonalizable and
    $\rk T_k = \rk T_k^T T_k$.
  \item[(ii)] The matrices $T_k T_l^T$ and $T_k^T T_l$
  are symmetric for all $k,l \in \{1,\ldots,s\}$.
  \end{itemize}
\end{theorem}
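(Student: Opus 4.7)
The plan is to verify necessity by direct computation and then attack sufficiency by first handling a single matrix via Witt's extension theorem, and finally reducing the general $s$-matrix problem to the single-matrix case by a block-refinement recursion. Necessity is immediate: if $T_k = U D_k V^T$ with $U,V$ complex orthogonal and $D_k$ diagonal, then $T_k^T T_k = V D_k^2 V^T$ is diagonalizable and has the same rank as $D_k$, which equals $\rk T_k$; while $T_k T_l^T = U D_k D_l U^T$ and $T_k^T T_l = V D_k D_l V^T$ are symmetric because diagonal matrices commute, so $D_k D_l$ is itself diagonal.

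For sufficiency, I would first prove the single-matrix case $s=1$. Since $T^T T$ is complex symmetric and diagonalizable, Theorem~\ref{th:simdiag} produces a complex orthogonal $V$ with $V^T T^T T V = \Lambda^2$ diagonal; fix any diagonal square root $\Lambda$ and set $W = T V$, so that the columns $w_i$ of $W$ satisfy $\langle w_i, w_j\rangle = \delta_{ij}\lambda_i^2$. Reordering so that $\lambda_i \neq 0$ for $i \leq r$ and $\lambda_i = 0$ for $i > r$, the rank equality $\rk W = \rk T = r$ forces $w_i = 0$ for $i > r$: each such column is a linear combination of $w_1,\ldots,w_r$, and taking inner products with the non-isotropic $w_j$ ($j \leq r$) shows all coefficients vanish. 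Normalizing $u_i := w_i/\lambda_i$ for $i \leq r$ yields an orthonormal tuple, which Corollary~\ref{wittcor} extends to an orthonormal basis of $\cc^n$; the resulting orthogonal $U$ satisfies $T = U\Lambda V^T$.

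For $s \geq 2$, I would apply the single-matrix case to $T_1$, write $T_1 = U_1 D_1 V_1^T$, and pass to $\tilde T_k := U_1^T T_k V_1$ so that $\tilde T_1 = D_1$ is diagonal, noting that conditions (i) and (ii) are preserved under orthogonal equivalence on both sides. The symmetry of $D_1 \tilde T_k^T = \tilde T_1 \tilde T_k^T$ and of $\tilde T_k^T D_1$ yields the identities $D_1 \tilde T_k^T = \tilde T_k D_1$ and $\tilde T_k^T D_1 = D_1 \tilde T_k$, which force each $\tilde T_k$ to respect the block decomposition determined by the distinct diagonal values of $D_1$. On a block where $D_1$ equals a nonzero scalar times the identity, these identities make the restriction of $\tilde T_k$ complex symmetric; condition (i) together with the commutativity that follows from condition (ii) (via the manipulations $T_l^T T_m = T_m^T T_l$ and $T_l T_n^T = T_n T_l^T$) then lets me apply Theorem~\ref{th:simdiag} to simultaneously diagonalize the restricted $\tilde T_k$ by a single orthogonal change of basis. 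On the zero block of $D_1$ there is no constraint from $\tilde T_1$, so I would recurse on the remaining matrices restricted to that subspace.

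The hard part will be the zero-block recursion. The restricted subspace carries the induced complex symmetric bilinear form, which in general is still nondegenerate but may contain large totally isotropic subspaces, so I cannot just invoke a real/Hermitian analogue; I have to verify that the restrictions of the $\tilde T_k$ still satisfy condition (i) in the restricted sense—in particular that rank equalities and diagonalizability of the restricted Gram-like matrices $\tilde T_k^T \tilde T_k$ persist after projection—and then splice the local decompositions on the various blocks back into global orthogonal matrices $U$ and $V$ using Witt's extension theorem to supply the necessary orthonormal completions. Once this bookkeeping is carried out, the recursion terminates after finitely many refinements and produces the claimed simultaneous orthogonal diagonalization.
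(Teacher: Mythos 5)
The paper does not prove this theorem: it is quoted from Choudhury and Horn~\cite{chou87} (their Theorem~2 gives the single-matrix case, their Theorem~9 the simultaneous case), so there is no in-paper argument to compare yours against. Your overall plan --- settle $s=1$ by diagonalizing $T^TT$ orthogonally via Theorem~\ref{th:simdiag}, normalize the non-isotropic columns, invoke Witt (Corollary~\ref{wittcor}), then reduce $s\geq 2$ by a block refinement driven by $D_1$ --- is a natural route, and the $s=1$ argument is correct as written.

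There is, however, a genuine error in the reduction step. From the two symmetry identities you correctly obtain $D_1\tilde T_k^T=\tilde T_k D_1$ and $\tilde T_k^T D_1 = D_1\tilde T_k$, but these do \emph{not} force $\tilde T_k$ to be block-diagonal with respect to the partition by distinct diagonal values of $D_1$. Write $D_1=\diag(\mu_1 I,\mu_2 I,\ldots)$. Subtracting and adding the two identities on the $(a,b)$ block gives $(\mu_a-\mu_b)\bigl[(\tilde T_k)_{ba}^T+(\tilde T_k)_{ab}\bigr]=0$ and $(\mu_a+\mu_b)\bigl[(\tilde T_k)_{ba}^T-(\tilde T_k)_{ab}\bigr]=0$; if $\mu_a=-\mu_b\neq 0$ the second is vacuous and the $(a,b)$ block need not vanish. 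Since over $\cc$ you are free to pick any diagonal square root $\Lambda$ of the diagonalized $T_1^TT_1$, you can (and must) take the same square root for equal squared values, so that $\lambda_i^2=\lambda_j^2$ implies $\lambda_i=\lambda_j$ and no nonzero $\pm\mu$ pair occurs; with that stipulation the block argument goes through, and without it the step as written is wrong. Beyond this, you explicitly leave the zero-block recursion as an unchecked bookkeeping exercise. It does work --- once the $\tilde T_k$ are block diagonal, conditions (i) and (ii) descend to each block, the termwise rank equality $\rk(\tilde T_k)_{aa}=\rk(\tilde T_k)_{aa}^T(\tilde T_k)_{aa}$ following from the global equality together with the termwise inequality $\rk(\tilde T_k)_{aa}\geq\rk(\tilde T_k)_{aa}^T(\tilde T_k)_{aa}$ --- but as presented the proposal is a sound plan with one repairable error and one section acknowledged as incomplete, not a finished proof.
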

Condition (i) is the necessary and sufficient condition for each $T_k$ to be
(individually) orthogonally equivalent to a diagonal matrix
(\cite{chou87}, Theorem 2).
It does not appear in Theorem~\ref{SSVD} because this condition is automatically
satisfied by real matrices.
When condition~(i) holds, it is shown in Theorem 9 of~\cite{chou87}
that condition (ii) is necessary and sufficient for the $T_k$
to be simultaneously orthogonally equivalent to diagonal matrices.

We continue with an analogue of Proposition~\ref{partialortho}.
\begin{proposition} \label{partialorthoC}
  Let $T$ be a complex tensor of order 3.
  The corresponding trilinear form $f(x,y,z)$ admits a decomposition
  of the form $f(x,y,z) = g(Ax,By,Cz)$ with $A,B$ orthogonal
  and $g$ as in~(\ref{diagtensor}) iff the $z$-slices of $T$ satisfy
  conditions (i) and (ii) of Theorem~\ref{SDCOE}.
\end{proposition}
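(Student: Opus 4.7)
The plan is to mimic the proof of Proposition~\ref{partialortho} almost verbatim, with Theorem~\ref{SSVD} replaced by its complex counterpart Theorem~\ref{SDCOE}. The only piece of content genuinely specific to the complex case is handling the extra condition~(i) of Theorem~\ref{SDCOE}; once this is taken care of, the remainder of the argument does not see the ground field in any essential way.

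For the forward implication, suppose $f(x,y,z)=g(Ax,By,Cz)$ with $A,B$ complex orthogonal and $g$ as in~(\ref{diagtensor}). By Proposition~\ref{slices} the $z$-slices factor as $T_k=A^TD_kB$ with $D_k=\diag(\alpha_1c_{1k},\ldots,\alpha_nc_{nk})$, so taking $U=A^T$ and $V=B^T$ (both complex orthogonal, since the transpose of a complex orthogonal matrix is complex orthogonal) gives $U^TT_kV=D_k$. Condition~(ii) then drops out of the computations $T_kT_l^T=A^TD_kD_lA$ and $T_k^TT_l=B^TD_kD_lB$, which are symmetric because diagonal matrices commute and $D_kD_l=D_lD_k$. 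For condition~(i), the identity $T_k^TT_k=B^TD_k^2B=B^{-1}D_k^2B$ exhibits $T_k^TT_k$ as similar to the diagonal matrix $D_k^2$, hence diagonalizable; moreover $\rk T_k=\rk D_k$ (since $A^T,B$ are invertible) equals $\rk D_k^2=\rk T_k^TT_k$ because $D_k$ is diagonal.

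For the converse, I would apply Theorem~\ref{SDCOE} to the $z$-slices to obtain complex orthogonal $U,V$ with $U^TT_kV$ diagonal for every $k$. Setting $h(x,y,z)=f(Ux,Vy,z)$, Proposition~\ref{slices} (which, as noted in the paper, does not require any orthogonality of the change of variables) tells us that the $z$-slices of $h$ are exactly the diagonal matrices $U^TT_kV$. The vanishing of every off-diagonal entry of every $z$-slice forces $h$ to contain no monomial $x_iy_jz_k$ with $i\neq j$, so $h(x,y,z)=\sum_{i=1}^n x_iy_ic_i(z)$ for some linear forms $c_i$. Writing $C=(c_{ik})$ and $g(x,y,z)=\sum_i x_iy_iz_i$, this is $h(x,y,z)=g(x,y,Cz)$. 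Since $U,V$ are orthogonal, $U^{-1}=U^T$ and $V^{-1}=V^T$, and inverting the change of variables yields $f(x,y,z)=g(U^Tx,V^Ty,Cz)$, the desired decomposition with orthogonal matrices $A=U^T,B=V^T$ and arbitrary~$C$.

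I do not anticipate a serious obstacle: Theorem~\ref{SDCOE} is precisely the substitute for the SVD in the complex orthogonal setting, and once one has simultaneous orthogonal diagonalization of the slices in hand, the passage back to a trilinear form decomposition is identical to the real case. The only subtlety worth flagging is that the diagonalizability half of condition~(i) has no analogue over $\mathbb{R}$ (every real symmetric matrix is diagonalizable, and more generally $T_k^TT_k$ is always positive semidefinite over $\mathbb{R}$), which is why this condition must be imported into the complex statement.
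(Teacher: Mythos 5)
Your proof is correct and follows essentially the same route as the paper: forward direction via Proposition~\ref{slices} and the decomposition $T_k=A^TD_kB$, converse via Theorem~\ref{SDCOE} to produce $U,V$ followed by the change of variables $h(x,y,z)=f(Ux,Vy,z)$. The only cosmetic difference is that you verify conditions (i) and (ii) of Theorem~\ref{SDCOE} by direct computation in the forward direction, whereas the paper simply observes that $T_k=A^TD_kB$ exhibits the slices as simultaneously orthogonally equivalent to diagonal matrices and invokes the ``only if'' half of Theorem~\ref{SDCOE}.
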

\begin{proof}
 Assume first that $f(x,y,z) = g(Ax,By,Cz)$ with $A,B$ orthogonal
 and $g$ as in~(\ref{diagtensor}).
 By Proposition~\ref{slices} the $z$-slices $T_1,\ldots,T_n$ are of the
 form $T_k = A^TD_kB$ where $D_k$ is diagonal, i.e., the $T_k$ are simultaneously orthogonally equivalent to diagonal matrices. They must therefore satisfy
 the conditions of Theorem~\ref{SDCOE}.

 Conversely, assume that the conditions of this theorem are satisfied.
 Then there are complex orthogonal matrices $U,V$ such that the matrices
 $U^T T_k V$ are all diagonal. We can consider the polynomial
 $h(x,y,z)=f(Ux,Vy,z)$ and conclude exactly
 as in the proof of Proposition~\ref{partialortho}.
\end{proof}
We proceed to the main result of Section~\ref{sec:cordi}.
This is an analogue of Theorem~\ref{realortho}.
\begin{theorem} \label{cortho}
  A complex tensor $T$ of order 3 admits an orthogonal decomposition iff the $x$, $y$ and $z$ slices of $T$ all satisfy conditions (i) and (ii)
  of Theorem~\ref{SDCOE}, namely, the $x$-slices must satisfy the conditions:
  \begin{itemize}
  \item[(i)] for each $k$, $X_k^T X_k$ is diagonalizable and
    $\rk X_k = \rk X_k^T X_k$,
  \item[(ii)] the matrices $X_k X_l^T$ and $X_k^T X_l$
  are symmetric for all $k,l \in \{1,\ldots,n\}$,
  \end{itemize}
  and likewise for the $y$ and $z$ slices.
  \end{theorem}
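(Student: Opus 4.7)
The plan is to follow closely the proof of Theorem~\ref{realortho}, substituting Proposition~\ref{partialorthoC} for Proposition~\ref{partialortho} and Theorem~\ref{SDCOE} for Theorem~\ref{SSVD}. The forward implication is immediate: if $T$ admits an orthogonal decomposition $t(x,y,z) = g(Ax,By,Cz)$, then by symmetry this decomposition can be rewritten so that any of the three slice directions plays the role of the $z$ direction, and Proposition~\ref{partialorthoC} then guarantees conditions (i) and (ii) for the corresponding slices.

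For the converse I first would establish a complex analogue of Lemma~\ref{lem:realortho}: if all slices of $f$ in all three directions satisfy the conditions of Theorem~\ref{SDCOE}, then so do all slices of $h(x,y,z) = f(Ax, By, Cz)$ for orthogonal $A, B, C$. Condition (ii) is handled exactly as in the real case via the slice formulas of Proposition~\ref{slices} (the orthogonality $A^T = A^{-1}$ is what makes the computation work over $\cc$ just as over $\rr$). For condition (i), the point is that Theorem~\ref{SDCOE} says conditions (i) and (ii) together are equivalent to the tuple of slices being simultaneously orthogonally equivalent to diagonal matrices; any linear combination of such a tuple is again orthogonally equivalent to a diagonal matrix, and a single diagonalizable matrix automatically satisfies (i).

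Next, by Proposition~\ref{partialorthoC} applied to the $z$-slices, there exist complex orthogonal $U, V$ such that $h(x,y,z) = f(Ux, Vy, z)$ has the special form $h = \sum_{i=1}^n x_i y_i c_i(z)$ for linear forms $c_i(z) = \langle c_i, z \rangle$. By the lemma above, the $x$-slices of $h$ still satisfy (i) and (ii). A direct computation gives $X_k = D_k C$, where $D_k$ is the matrix with a single $1$ at position $(k,k)$ and $C$ has rows $c_1,\ldots,c_n$. Then $X_k X_l^T = D_k C C^T D_l$ has at most one nonzero entry, at position $(k,l)$; the symmetry condition in (ii) forces $(CC^T)_{kl} = 0$ for $k \neq l$, i.e., the rows of $C$ are pairwise orthogonal. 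Moreover $X_k^T X_k = c_k c_k^T$, which is rank~$1$ when $c_k \neq 0$; condition (i) ($X_k^T X_k$ diagonalizable and $\rk X_k = \rk X_k^T X_k$) then forces each nonzero $c_k$ to be non-isotropic (since an isotropic $c_k$ would give a nonzero nilpotent $c_k c_k^T$).

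Finally, the nonzero rows of $C$ form a pairwise orthogonal family of non-isotropic vectors; they can be rescaled to an orthonormal family, which by Corollary~\ref{wittcor} extends to an orthonormal basis of $\cc^n$. Placing the normalized nonzero rows at their original positions in a new matrix $C'$, filling the positions of the zero rows with the extending vectors, and using zero coefficients $\alpha_i$ at those positions, we obtain $h(x,y,z) = g(x, y, C'z)$ with $g$ as in~(\ref{diagtensor}) and $C'$ orthogonal. Composing with $U, V$ gives the desired decomposition $f(x,y,z) = g(U^T x, V^T y, C' z)$. The main obstacle compared to the real case is the possible presence of isotropic vectors: condition (i) is no longer automatic and must be exploited to rule out isotropic rows of $C$, and the extension to an orthonormal basis requires Witt's theorem via Corollary~\ref{wittcor} rather than Gram-Schmidt.
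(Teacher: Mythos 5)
Your proposal is correct and follows essentially the same route as the paper: forward direction via Proposition~\ref{partialorthoC}, a complex analogue of Lemma~\ref{lem:realortho} (which the paper states as Lemma~\ref{lem:cortho}), reduction to the special form $\sum_i x_iy_ic_i(z)$ via Theorem~\ref{SDCOE} and Proposition~\ref{slices}, and a Witt--extension step in place of Gram--Schmidt. One small variation worth noting: in ruling out isotropic rows of $C$, you correctly identify $X_k^T X_k$ as the rank-one outer product $c_k c_k^T$ and invoke the \emph{diagonalizability} part of condition~(i), whereas the paper describes the matrix with a single $(k,k)$ entry equal to $c_k^T c_k$ (which is in fact $X_k X_k^T$, not $X_k^T X_k$) and invokes the \emph{rank equality}; your version is the cleaner fit for condition~(i) as stated, and both reach the same conclusion that nonzero rows of $C$ are non-isotropic.
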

For the proof we naturally need an analogue of Lemma~\ref{lem:realortho}:
\begin{lemma} \label{lem:cortho}
  Let $f(x,y,z)$ be a complex trilinear form and let $A,B,C$ be orthogonal
  matrices. If the $x$, $y$ and $z$ slices of  $f$ satisfy conditions (i)
  and (ii) of Theorem~\ref{SDCOE} then the same is true of the form
  $h(x,y,z)=f(Ax,By,Cz)$.
\end{lemma}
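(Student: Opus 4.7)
The plan is to reduce the lemma to Theorem~\ref{SDCOE}, which characterizes simultaneous orthogonal equivalence to diagonal form precisely via conditions (i) and (ii). The ``only if'' direction of Theorem~\ref{SDCOE} converts the hypothesis into the statement that, for each of the three directions, the slices of $f$ are simultaneously orthogonally equivalent to a tuple of diagonal matrices. I will show that this stronger property is preserved when passing from $f$ to $h$; conditions (i) and (ii) for $h$ then follow by applying the ``if'' direction of Theorem~\ref{SDCOE} in each of the three directions.

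The central computation is the analogue of Proposition~\ref{slices} for all three kinds of slices. Writing $X_i^f, Y_i^f, Z_i^f$ for the slices of $f$ and $X_k^h, Y_k^h, Z_k^h$ for those of $h(x,y,z)=f(Ax,By,Cz)$, a direct differentiation yields
\begin{equation*}
X_k^h = B^T\Bigl(\sum_{i=1}^n a_{ik} X_i^f\Bigr) C, \quad Y_k^h = A^T\Bigl(\sum_{i=1}^n b_{ik} Y_i^f\Bigr) C, \quad Z_k^h = A^T\Bigl(\sum_{i=1}^n c_{ik} Z_i^f\Bigr) B.
\end{equation*}
Consider the $x$-slices. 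By hypothesis and Theorem~\ref{SDCOE}, there exist complex orthogonal matrices $U,V$ and diagonal matrices $D_1,\ldots,D_n$ with $X_i^f = U D_i V^T$ for all $i$. Substituting into the formula above gives
\begin{equation*}
X_k^h = (B^T U)\Bigl(\sum_{i=1}^n a_{ik} D_i\Bigr)(V^T C),
\end{equation*}
so with $\tilde U = B^T U$ and $\tilde V = C^T V$ (both complex orthogonal, as products of complex orthogonal matrices) the matrix $\tilde U^T X_k^h \tilde V = \sum_i a_{ik} D_i$ is diagonal. Hence the $x$-slices of $h$ are simultaneously orthogonally equivalent to diagonal matrices, and the same argument applies to the $y$- and $z$-slices. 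Theorem~\ref{SDCOE} in the reverse direction then delivers conditions (i) and (ii) for each family of slices of $h$.

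The main obstacle avoided by this approach is a direct verification of condition (i) for $h$: the matrix $(X_k^h)^T X_k^h$ is a double linear combination of products $(X_i^f)^T X_j^f$, and neither diagonalizability nor the rank equality $\rk M = \rk M^T M$ is obviously preserved under linear combinations. Routing through the stronger property ``simultaneously orthogonally equivalent to diagonal matrices''---which is manifestly stable under linear combinations of slices and under orthogonal multiplication on either side---bypasses this difficulty completely.
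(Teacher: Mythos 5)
Your proof is correct. The slice transformation formulas $X_k^h = B^T(\sum_i a_{ik} X_i^f)C$, $Y_k^h = A^T(\sum_i b_{ik} Y_i^f)C$, $Z_k^h = A^T(\sum_i c_{ik} Z_i^f)B$ are right, and routing everything through the stable property ``simultaneously orthogonally equivalent to diagonal matrices'' is sound: it is manifestly preserved under taking linear combinations of the slices and under multiplication by orthogonal factors on either side, and Theorem~\ref{SDCOE} converts back and forth between this property and conditions (i)--(ii).

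Your route is cleaner and more uniform than the paper's. The paper handles condition (ii) separately (by invoking the real-case argument of Lemma~\ref{lem:realortho} verbatim), reduces the full change of variables $(A,B,C)$ to a single-factor change $h(x,y,z)=f(Ax,y,z)$, treats the $y$- and $z$-slices of that special case by a direct rank-and-diagonalizability computation on $Z'_k {Z'_k}^T = A^T(Z_k Z_k^T)A$, and only for the $x$-slices (which become linear combinations $\sum_i a_{ik} X_i$) falls back on the argument you use throughout: a linear combination of matrices that are simultaneously orthogonally equivalent to diagonal matrices is itself orthogonally equivalent to a diagonal matrix, hence satisfies (i) by Theorem~2 of~\cite{chou87}. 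You apply this stable-property argument symmetrically to all three slice directions, obtaining (i) and (ii) simultaneously via the forward implication of Theorem~\ref{SDCOE}, and thereby avoid both the case split and the separate treatment of (ii). The paper's proof is slightly more self-contained in that it makes the rank-preservation step visible, whereas yours delegates it entirely to Theorem~\ref{SDCOE}; both buy essentially the same thing, but yours is shorter and harder to get wrong.
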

\begin{proof}
  Since the slices of $f$ satisfy condition (ii) of Theorem~\ref{SDCOE},
  the same must be true of $h$. Indeed, the proof of Lemma~\ref{lem:realortho}
  applies verbatim to the present situation.
  It therefore remains to deal with condition (i).
  Like in Lemma~\ref{lem:realortho} it suffices to consider the case
  $h(x,y,z)=f(Ax,y,z)$.
  Let us denote again by $Z_1,\ldots,Z_n$  the $z$-slices of $f$ and by
  $Z'_1,\ldots,Z'_n$ the $z$-slices  of $h$.
  We saw that $Z'_k=A^TZ_k$ by Proposition~\ref{slices} so that
  $Z'_k {Z'_k}^T = A^T (Z_k Z_k^T) A= A^{-1} (Z_k Z_k^T)A$.
  According to condition (i) $Z_k Z_k^T$ is diagonalizable,
  so the same is true of $Z'_k {Z'_k}^T$.
  Moreover $\rk Z'_k {Z'_k}^T = \rk Z_k Z_k^T = \rk Z_k = \rk Z'_k$ 
  and this completes the proof that the $Z'_k$ satisfy condition (i).
  A similar argument applies to the $y$-slices.

  For the $x$-slices we will use the fact that condition (i)
  of Theorem~\ref{cortho} is the condition for each slice
  to be (individually) orthogonally equivalent to a diagonal matrix.\footnote{We could also have used this argument for the $y$ and $z$ slices.}
  As pointed out before, this is shown in \cite[Theorem~2]{chou87}.
  As shown in the proof of Lemma~\ref{lem:realortho}, each $x$-slice
  $X'_k$ of $h$ is a linear combination of the $x$-slices $X_1,\ldots,X_n$
  of $f$. These slices satisfy condition (i) and (ii) and are therefore
  simultaneously orthogonally equivalent to diagonal matrices
  by Theorem~\ref{SDCOE}.
  Any linear combination of these matrices, and in particular $X'_k$,
  is therefore orthogonally equivalent to a diagonal matrix.
  The $X'_k$ must therefore satisfy condition (i) by~\cite[Theorem~2]{chou87}
  (note that we only use the ``easy'' direction of this theorem here).
\end{proof}

\begin{proof}[Proof of Theorem~\ref{cortho}]
 If $T$ admits an orthogonal decomposition, Proposition~\ref{partialorthoC}
  shows that the $x$, $y$ and $z$ slices satisfy
  conditions (i) and (ii) of Theorem~\ref{SDCOE}.

  For the converse, we begin as in the proof Theorem~\ref{realortho}
  with the case where the trilinear form $f$ associated to $T$
  is as in~(\ref{fABId}). This case can be treated in the same way except
  for one important difference. In the proof of Theorem~\ref{realortho}
  we pointed out that the null rows of $C$ must be treated separately
  because they cannot be normalized. Over $\cc$ there is a further
  complication: there might be rows $c_k$ such that $c_k^Tc_k=0$ but
  $c_k \neq 0$; such rows could not be normalized.
  Fortunately, it turns out that there are no such rows in $C$.
  Recall indeed from the  proof of Theorem~\ref{realortho}
  that the $k$-th $x$-slice of $T$ is $X_k=D_kC$,
  where $D_k$ is the diagonal matrix with an entry equal to 1
  at row $k$ and column $k$, and zeroes elsewhere.
  In other words, row $k$ of $X_k$ is equal to $c_k$ and all other rows
 are identically 0. Moreover $X_k^T X_k$ has one entry (at row $k$ and
  column $k$) equal to $c_k^Tc_k$ and only 0's elsewhere.
  By condition (i) of Theorem~\ref{SDCOE} we must
  have $\rk X_k = \rk X_k^T X_k$.
  It follows that $c_k^Tc_k=0$ implies $c_k=0$.
  We have therefore shown that all rows of $C$ can be normalized
  except the null rows. Morever, any two distinct rows are
  orthogonal as in the proof of Theorem~\ref{realortho}.
  We can therefore conclude essentially as in that proof: the set of normalized non-null rows of $C$
  is an orthonormal family, and can therefore be extended to an orthonormal
  basis of $\cc^n$ by Witt's theorem (Theorem~\ref{witt}
  and Corollary~\ref{wittcor}).
    Again, the coefficients $\alpha_i$ corresponding to the new vectors
  in this basis are set to 0.

  It remains to reduce the general case of Theorem~\ref{cortho}
  to~(\ref{fABId}) and this can be done essentially as in the proof of
  Theorem~\ref{realortho}. Indeed, let $Z_1,\ldots,Z_n$
  be the $z$-slices of $T$. By Theorem~\ref{SDCOE} there are
  orthogonal matrices $U,V$ such that all the matrices $D_k = U^T Z_k V$
  are diagonal. We set $h(x,y,z)=f(Ux,Vy,z)$ like in the proof
  of Proposition~\ref{partialortho} and the $z$-slices of $h$
  are the diagonal matrices $D_k$ by Proposition~\ref{slices}.
  This implies that $h$ is as in~(\ref{ABId}).
  Moreover, $h$ satisfies conditions (i) and (ii) of Theorem~\ref{SDCOE}
  by Lemma~\ref{lem:cortho}. This completes the reduction, and the proof
  of Theorem~\ref{cortho}.
\end{proof}

\subsection{Closure properties} \label{closure}

In Section~\ref{sec:cordi} we gave a characterization of the set
$\OT_n(\cc)$
of tensors $T \in \cc^{n \times n \times n}$ that admit an orthogonal decomposition.
In this section we show that $\OT_n(\cc)$ is not closed, and we find
a a nontrivial family of tensors in the closure (Theorem~\ref{geninclosure}).
Then we find some of the equations that are satisfied by tensors in 
$\overline{\OT_n(\cc)}$ (Theorem~\ref{otbar}) and we show that these
equations do not characterize the closure completely (Theorem~\ref{outclosureot}).

First we show that $\OT_2(\cc)$ is not closed by exhibiting a tensor
belonging to $\overline{\OT_2(\cc)}$ but not to $\OT_2(\cc)$.
This is in fact the same tensor as in Example~\ref{borderex}
but we view it as an ordinary tensor instead of a symmetric tensor.
\begin{example} \label{borderexordi}
  Let $f^1(x_1,x_2,y_1,y_2,z_1,z_2)=(x_1+ix_2)(y_1+iy_2)(z_1+iz_2)$.
  The $z$-slices of $f^1$ are: 
$$Z_1 = \begin{pmatrix}
  1 & i\\
  i & -1
\end{pmatrix},
Z_2 = i\begin{pmatrix}
  1 & i\\
  i & -1
\end{pmatrix}.$$
and the $x$ and $y$ slices are of course the same.
We can apply Theorem~\ref{cortho} to show that $f^1 {\not \in} \OT_2(\cc)$.
Indeed, the second part of condition (i) of Theorem~\ref{SDCOE} is violated:
$\rk Z_1 =1$ but $\rk Z_1^T Z_1 = \rk Z_1^2 = 0$.

In order to show that $f^1 \in \overline{\OT_2(\cc)}$, consider the polynomials
$$g_{\epsilon}=x_1y_1z_1+\epsilon x_2y_2z_2$$
and $f^1_{\epsilon}=g_{\epsilon}(A_{\epsilon}x,A_{\epsilon}y,A_{\epsilon}z)$
where $$A_{\epsilon}=\begin{pmatrix}
  1 & i+\epsilon\\
  1+\epsilon^2 & i-\epsilon
\end{pmatrix}$$
is the same matrix as in Example~\ref{borderex}.
Since $f^1 = \lim_{\epsilon \rightarrow 0} f^1_{\epsilon}$ it remains to show that
$f^1_{\epsilon} \in \OT_2(\cc)$ for all $\epsilon$ sufficiently close to 0.
We have seen in Example~\ref{borderex} that $A_{\epsilon} = D_{\epsilon}U_{\epsilon}$ where $D_{\epsilon}$
is a diagonal matrix and $U_{\epsilon}$ orthogonal.
Hence we have
\begin{equation} \label{epsilondec}
  f^1_{\epsilon}=g_{\epsilon}(D_{\epsilon}U_{\epsilon}x,D_{\epsilon}U_{\epsilon}y,
  D_{\epsilon}U_{\epsilon}z)=h_{\epsilon}(U_{\epsilon}x,U_{\epsilon}y,U_{\epsilon}z)
  \end{equation}
where $h_{\epsilon}=\alpha_{\epsilon}x_1y_1z_1+\beta_{\epsilon}x_2y_2z_2$
for some appropriate coefficients $\alpha_{\epsilon}, \beta_{\epsilon}$.
We conclude that~(\ref{epsilondec}) provides as needed
an orthogonal decomposition of $f^1_{\epsilon}$.
\end{example}
We can also give less symmetric examples of polynomials on the boundary
of $\OT_2(\cc)$.
\begin{example} \label{borderex2}
 Let $f^2(x_1,x_2,y_1,y_2,z_1,z_2)=(x_1+ix_2)(y_1+iy_2)z_1.$
 This polynomial has the same first z-slice as the polynomial $f$ of Example~\ref{borderexordi}; this shows that $f^2 {\not \in} \OT_2(\cc)$.
 In order to show that $f^2 {\in} \overline{\OT_2(\cc)}$ we can proceed
 as in the previous example.
 Indeed we have $f^2 = \lim_{\epsilon \rightarrow 0} f^2_{\epsilon}$
 where $f^2_{\epsilon}=g_{\epsilon}(A_{\epsilon}x,A_{\epsilon}y,z) =
 g_{\epsilon}(D_{\epsilon}U_{\epsilon}x,A_{\epsilon}U_{\epsilon}y,z)$.
 From this representation of $f^2_{\epsilon}$
 we obtain an orthogonal decomposition in the same way as before.
\end{example}
\begin{example} \label{borderex3}
  Another similar example is: $$f^3(x_1,x_2,y_1,y_2,z_1,z_2)=(x_1+ix_2)y_1z_1.$$
  The $z$-slices of $f^3$ are: 
$$Z_1 = \begin{pmatrix}
  1 & 0\\
  i & 0
\end{pmatrix},
  Z_2 = 0.$$
  We have $Z_1^TZ_1=0$ and we conclude that our polynomial does not belong to
  $\OT_2(\cc)$ for the same reason as in Example~\ref{borderexordi}.
  In order to show that $f^3 \in \overline{\OT_2(\cc)}$, consider the polynomial
  $f^3_{\epsilon}=g_{\epsilon}(A_{\epsilon}x,y,z)$
  where $g_{\epsilon}$ and $A_{\epsilon}$ are as in the two previous examples.
  Since $f^3 = \lim_{\epsilon \rightarrow 0} f^3_{\epsilon}$ it remains to show that
$f^3_{\epsilon} \in \OT_2(\cc)$ for all $\epsilon$ sufficiently close to 0.

  We have seen 
  that $A_{\epsilon} = D_{\epsilon}U_{\epsilon}$ where $D_{\epsilon}$
is a diagonal matrix and $U_{\epsilon}$ orthogonal.
Hence we have
\begin{equation} \label{epsilondec2}
  f^3_{\epsilon}=g_{\epsilon}(D_{\epsilon}U_{\epsilon}x,y,z)
  =g'_{\epsilon}(U_{\epsilon}x,y,z)
  \end{equation}
where $h'_{\epsilon}=\alpha'_{\epsilon}x_1y_1z_1+\beta'_{\epsilon}x_2y_2z_2$
for some appropriate coefficients $\alpha'_{\epsilon}, \beta'_{\epsilon}$.
We conclude that~(\ref{epsilondec2}) provides as needed
an orthogonal decomposition of $f^3_{\epsilon}$.
\end{example}

Like in Section~\ref{sclosure} we can build on these examples to exhibit
more elements of $\overline{\OT_n(\cc)}$.
\begin{definition} \label{def:odeco}
We denote by $\ODECO_n(\cc)$ the set of tensors of order 3 that
    can be written as $\sum_{j=1}^k  u_j \otimes v_j \otimes w_j$
    where  each of the the 3 lists $(u_1,\ldots,u_k)$, $(v_1,\ldots,v_k)$,
    $(w_1,\ldots,w_k)$ is made of $k$ linearly independent,
    pairwise orthogonal vectors
    in $\cc^n$.
    We use the same notation for the corresponding set of trilinear forms
    in $\cc[x_1,\ldots,x_n,y_1,\ldots,y_n,z_1,\ldots,z_n]$.
\end{definition}

The next proposition gives a characterization of $OT_n(\cc)$ in the style
of Definition~\ref{def:odeco}. Compare with Proposition~\ref{waringtensor},
where for the real field we did have to introduce explicitly a non-isotropy
requirement for the vectors $u_i$, $v_i$, $w_i$ appearing in the decomposition.
\begin{proposition} \label{nonisotropic}
  $OT_n(\cc)$ is equal to the set of order 3 tensors which admit a decomposition
  of the form $\sum_{j=1}^k  u_j \otimes v_j \otimes w_j$ for some $k \leq n$,
    where  each of the 3 lists $(u_1,\ldots,u_k)$, $(v_1,\ldots,v_k)$,
    $(w_1,\ldots,w_k)$ is made of $k$ 
    pairwise orthogonal non-isotropic vectors of $\cc^n$.
\end{proposition}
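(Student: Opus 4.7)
The plan is to mirror the strategy used for Proposition \ref{snonisotropic}, adapting it to the trilinear setting. The proof splits into the two inclusions, and in both directions the key ingredients are (a) normalisation of non-isotropic vectors and (b) Witt's extension theorem in the form of Corollary \ref{wittcor}.

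For the forward inclusion, I would start from $T \in \OT_n(\cc)$ and expand Definition \ref{orthodef}. Writing $t(x,y,z) = g(Ax,By,Cz)$ with $g = \sum_{i=1}^n \alpha_i x_iy_iz_i$ and denoting the rows of $A,B,C$ by $a_i,b_i,c_i$, one immediately obtains
\begin{equation*}
  T \;=\; \sum_{i=1}^n \alpha_i\, a_i \otimes b_i \otimes c_i.
\end{equation*}
Discarding the indices $i$ with $\alpha_i=0$ and absorbing each remaining $\alpha_i$ into (say) $c_i$, one gets $T = \sum_{j=1}^k u_j \otimes v_j \otimes w_j$ with $k \leq n$. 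The three lists inherit pairwise orthogonality and non-isotropy from the orthogonality of $A,B,C$ (rows of a complex orthogonal matrix satisfy $\langle r_i, r_j \rangle = \delta_{ij}$, and scaling by nonzero scalars preserves both properties).

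For the converse, suppose $T = \sum_{j=1}^k u_j \otimes v_j \otimes w_j$ with the three prescribed lists of pairwise orthogonal non-isotropic vectors. Since every $u_j,v_j,w_j$ is non-isotropic, one may pick square roots $\lambda_j,\mu_j,\nu_j \in \cc^*$ of $\langle u_j,u_j\rangle$, $\langle v_j,v_j\rangle$, $\langle w_j,w_j\rangle$ and set $u'_j = u_j/\lambda_j$, $v'_j = v_j/\mu_j$, $w'_j = w_j/\nu_j$. Each of the three families $(u'_j)$, $(v'_j)$, $(w'_j)$ is then orthonormal in the sense of Corollary \ref{wittcor}, and therefore extends to an orthonormal basis of $\cc^n$. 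Let $A,B,C$ be the orthogonal matrices whose rows are these bases, and set $\alpha_j = \lambda_j\mu_j\nu_j$ for $j \leq k$ and $\alpha_j = 0$ for $j>k$. Then
\begin{equation*}
  T \;=\; \sum_{j=1}^n \alpha_j\, u'_j \otimes v'_j \otimes w'_j,
\end{equation*}
which is exactly the trilinear form $g(Ax,By,Cz)$ with $g$ as in~(\ref{diagtensor}), proving $T \in \OT_n(\cc)$.

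The only non-routine step is the normalisation-then-extension procedure: I expect its subtlety is what forces the explicit non-isotropy hypothesis in the statement, since over $\cc$ a nonzero pairwise orthogonal family may contain isotropic vectors that cannot be rescaled to self-inner product $1$. Once non-isotropy is granted the extension to an orthonormal basis is guaranteed by Witt's theorem, exactly as in the symmetric case of Proposition \ref{snonisotropic}. I also note that $k \leq n$ comes for free from Corollary \ref{total}, since pairwise orthogonal non-isotropic vectors are automatically linearly independent.
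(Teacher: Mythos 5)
Your proof is correct and matches the paper's intended argument exactly: the paper omits the proof with a remark that it is ``entirely parallel to the proof of Proposition~\ref{snonisotropic},'' and your writeup carries out precisely that parallel argument (rows of the orthogonal matrices give the decomposition in one direction; normalization of the non-isotropic vectors followed by Witt's theorem via Corollary~\ref{wittcor} gives the orthogonal matrices in the other direction).
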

We skip the proof of this proposition because it is entirely parallel
to the proof of Proposition~\ref{snonisotropic} for symmetric tensors.
\begin{theorem} \label{geninclosure}
  For every $n \geq 2$ we have $\OT_n(\cc) \subseteq \ODECO_n(\cc) \subseteq \overline{\OT_n(\cc)}$.
  The first inclusion is strict for every $n \geq 2$.
\end{theorem}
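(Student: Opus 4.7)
The plan is to closely parallel the proof of Theorem~\ref{inclosure} for symmetric tensors, treating the three ``directions'' of the ordinary tensor independently. I would organize the argument into three short steps.

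First, for the inclusion $\OT_n(\cc) \subseteq \ODECO_n(\cc)$, I would simply invoke Proposition~\ref{nonisotropic}: any $T \in \OT_n(\cc)$ has a decomposition $\sum_{j=1}^k u_j \otimes v_j \otimes w_j$ in which each of the three lists consists of pairwise orthogonal non-isotropic vectors. Such a list is automatically linearly independent, since a relation $\sum_j \alpha_j u_j = 0$ gives, on pairing with $u_i$, the equation $\alpha_i \langle u_i,u_i \rangle = 0$, forcing $\alpha_i = 0$ by non-isotropy. So the decomposition exhibits $T$ as an element of $\ODECO_n(\cc)$.

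Second, for the strictness of the first inclusion, I would use Example~\ref{borderexordi}. The polynomial $f^1 = (x_1+ix_2)(y_1+iy_2)(z_1+iz_2)$ corresponds to the rank-one tensor $u \otimes u \otimes u$ with $u = (1,i)$, and since each of the three singleton lists $(u)$ is trivially ``linearly independent and pairwise orthogonal,'' we have $f^1 \in \ODECO_2(\cc)$. On the other hand, Example~\ref{borderexordi} already showed $f^1 \notin \OT_2(\cc)$. For $n > 2$ we introduce $n-2$ dummy variables in each block, so that the same $f^1$ is viewed as a tensor in $\cc^{n \times n \times n}$; its first $z$-slice (padded with zeros) still has $\rk Z_1 = 1$ but $\rk Z_1^T Z_1 = 0$, so Theorem~\ref{cortho} still rules out membership in $\OT_n(\cc)$.

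Third, for $\ODECO_n(\cc) \subseteq \overline{\OT_n(\cc)}$, I would take a generic $T = \sum_{j=1}^k u_j \otimes v_j \otimes w_j \in \ODECO_n(\cc)$ and apply Lemma~\ref{approx} \emph{three times independently}: once to each of the lists $(u_1,\ldots,u_k)$, $(v_1,\ldots,v_k)$ and $(w_1,\ldots,w_k)$. This yields, for every $\epsilon > 0$, three tuples $(u_j(\epsilon))$, $(v_j(\epsilon))$, $(w_j(\epsilon))$ of pairwise orthogonal non-isotropic vectors with $u_j = \lim_{\epsilon \to 0} u_j(\epsilon)$, and similarly for $v_j$ and $w_j$. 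Setting
\[
T(\epsilon) = \sum_{j=1}^k u_j(\epsilon) \otimes v_j(\epsilon) \otimes w_j(\epsilon),
\]
Proposition~\ref{nonisotropic} guarantees $T(\epsilon) \in \OT_n(\cc)$ for every small $\epsilon$, and continuity of the tensor-product operation gives $T = \lim_{\epsilon \to 0} T(\epsilon)$, so $T \in \overline{\OT_n(\cc)}$.

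I do not expect a significant obstacle: the key technical ingredient (the isotropic-supplement construction of Lemma~\ref{approx}) is already available, and the three orthogonal lists in Definition~\ref{def:odeco} are treated independently, so the symmetric-tensor argument of Theorem~\ref{inclosure} transports with essentially no change. The mildest subtlety is making sure that Proposition~\ref{nonisotropic} really does give the ``non-isotropic'' form needed for the strict inclusion and for the approximation step, but both uses follow immediately from its statement.
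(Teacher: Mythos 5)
Your proposal is correct and follows essentially the same route as the paper: Proposition~\ref{nonisotropic} for the first inclusion, one of the explicit examples ($f^1$) together with dummy variables for strictness, and three independent applications of Lemma~\ref{approx} for the second inclusion. The only difference is that you spell out details (linear independence of pairwise orthogonal non-isotropic vectors, and the rank computation for the padded slice) that the paper leaves implicit.
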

\begin{proof}
  The first inclusion follows from Definition~\ref{def:odeco} and Proposition~\ref{nonisotropic}.
  Any one of the 3 examples at the beginning of Section~\ref{sclosure}
  shows that the inclusion is strict for $n=2$.
  Like in the proof of Theorem~\ref{inclosure} this can be extended to
  any $n >2$ by adding dummy variables.
  
  For the second inclusion,
  consider a tensor $T \in \ODECO_n(\cc)$ and the corresponding decomposition
  $T = \sum_{j=1}^k  u_j \otimes v_j \otimes w_j$. We can approximate
  to an arbitrary precision the tuple $(u_1,\ldots,u_k)$ by tuples
  $(u'_1,\ldots,u'_k)$ satisfying the property of Lemma~\ref{approx}.
  We can also approximate the tuples $(v_1,\ldots,v_k)$ and
  $(w_1,\ldots,w_k)$ by tuples $(v'_1,\ldots,v'_k)$ and
  $(w'_1,\ldots,w'_k)$ satisfying the same property.
  In this way we approximate $T$ to an arbitrary precision by the tensors
  $\sum_{j=1}^k  u'_j \otimes v'_j \otimes w'_j$,
  and these tensors belong to $OT_n(\cc)$ by Proposition~\ref{nonisotropic}.
 \end{proof}
One could show that
the second inclusion is strict for every large enough~$n$
by adapting the proof of a result about symmetric tensors
in an earlier version of this paper:
Theorem~30 in~\cite{Koi19v2}.
In that theorem the strict inclusion
$\SODECO_n(\cc) \subsetneq \overline{\OW_n(\cc)}$
was obtained for large enough $n$
for a polynomial of ``high Waring rank''. 
For Theorem~\ref{geninclosure} one would start instead from
a multilinear polynomial of ``high tensor rank'' (the details are omitted).
It would be interesting to find out whether the second inclusion of
Theorem~\ref{geninclosure} is strict for {\em all} $n \geq 2$.
Recall that the corresponding result for symmetric tensors was established
in Theorem~\ref{inclosure} of the present paper.

In order to complete the parallel with the study of symmetric tensors
in Section~\ref{sec:waring} it remains to investigate the closure properties
of $\OT_n(\cc)$ ``from above''.
Like in Section~\ref{sec:asd} this will be done thanks to a connection
with the ASD property.
\begin{proposition} \label{otasd}
  Let $X_1,\ldots,X_n$ be the $x$-slices of
  a tensor in $\overline{\OT_n(\cc)}$.
  The $n^2$ matrices $X_k^TX_l$ ($1 \leq k,l \leq n$) are symmetric and
  approximately simultaneously diagonalizable  (ASD).
  Likewise, the $n^2$ matrices $X_kX_l^T$ are symmetric and ASD.
  The same properties also hold for the $y$ and $z$-slices.
\end{proposition}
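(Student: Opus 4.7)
The plan is to work out the case $T \in \OT_n(\cc)$ explicitly and then pass to the closure by a straightforward continuity argument, invoking Definition~\ref{asd} directly.

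First, suppose $T \in \OT_n(\cc)$. By Theorem~\ref{cortho} (equivalently, by the analogue of Proposition~\ref{partialorthoC} applied in the $x$-direction), the $x$-slices $X_1,\ldots,X_n$ of $T$ satisfy the hypotheses of Theorem~\ref{SDCOE}, so there exist complex orthogonal matrices $U,V$ and diagonal matrices $D_1,\ldots,D_n$ such that $X_k = U^T D_k V$ for every $k$. Then
\begin{equation*}
X_k^T X_l \;=\; V^T D_k\,(U U^T)\,D_l\, V \;=\; V^T (D_k D_l)\, V,
\end{equation*}
using the fact that $U U^T = I$ for a complex orthogonal matrix $U$. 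The $n^2$ diagonal products $D_k D_l$ pairwise commute, so the tuple $(X_k^T X_l)_{1\le k,l\le n}$ is simultaneously diagonalized by conjugation with $V^{-1}$. Each matrix $V^T (D_k D_l) V$ is symmetric because $(V^T (D_k D_l) V)^T = V^T (D_k D_l)^T V = V^T (D_k D_l) V$. The analogous calculation $X_k X_l^T = U^T (D_k D_l) U$ handles the other family.

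Now let $T \in \overline{\OT_n(\cc)}$ and choose a sequence $T^{(m)} \in \OT_n(\cc)$ with $T^{(m)} \to T$. The $x$-slices $X_k^{(m)}$ converge entrywise to $X_k$, so the products $(X_k^{(m)})^T X_l^{(m)}$ converge to $X_k^T X_l$, and $X_k^{(m)} (X_l^{(m)})^T$ converges to $X_k X_l^T$. Symmetry is a closed condition, so each $X_k^T X_l$ and each $X_k X_l^T$ is symmetric. To see that the tuple $(X_k^T X_l)_{k,l}$ is ASD, fix $\epsilon > 0$ and pick $m$ large enough that $\bigl\| X_k^T X_l - (X_k^{(m)})^T X_l^{(m)} \bigr\| < \epsilon$ for all $k,l$; the tuple $\bigl((X_k^{(m)})^T X_l^{(m)}\bigr)_{k,l}$ is simultaneously diagonalizable by the previous paragraph and provides the required $\epsilon$-approximation in the sense of Definition~\ref{asd}. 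The same argument applied to $(X_k^{(m)} (X_l^{(m)})^T)_{k,l}$ shows that $(X_k X_l^T)_{k,l}$ is ASD.

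Finally, the entire argument is symmetric in the three tensor directions: the only input used about the $x$-slices is that, for every $T \in \OT_n(\cc)$, they are simultaneously orthogonally equivalent to diagonal matrices, and Theorem~\ref{cortho} grants this equally for the $y$- and $z$-slices. Hence the conclusion transfers verbatim to the $y$- and $z$-slices. There is no real obstacle; the one point that deserves attention is the use of both identities $U^T U = I$ and $U U^T = I$ for complex orthogonal $U$, which is what lets the products $X_k^T X_l$ and $X_k X_l^T$ collapse to the clean form $P^T \Delta P$ with $P$ orthogonal and $\Delta$ diagonal.
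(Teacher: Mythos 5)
Your argument is correct and matches the paper's own proof essentially step for step: apply Theorem~\ref{cortho} plus Theorem~\ref{SDCOE} to get simultaneous orthogonal diagonalization $X_k = U^T D_k V$ on $\OT_n(\cc)$, compute $X_k^T X_l = V^T D_k D_l V$ (and similarly $X_k X_l^T = U^T D_k D_l U$) to see symmetry and simultaneous diagonalizability, then pass to the limit for $\overline{\OT_n(\cc)}$. The only difference is that you spell out the $\epsilon$-approximation explicitly where the paper just says ``passing to the limit''.
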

\begin{proof}
  Let $X_1,\ldots,X_n$ be the $x$-slices of a tensor $T \in \OT_n(\cc)$.
  By Theorem~\ref{SDCOE} and Theorem~\ref{cortho}
  there are orthogonal matrices $U$ and $V$ such that the matrices
  $D_k = U^TX_k V$ are all diagonal (this is actually the easier direction
  of Theorem~\ref{cortho}).
    Therefore $X_k^T X_l = VD_kD_l V^T$. In particular, these $n^2$ matrices
  are symmetric and simultaneously diagonalizable. Passing to the limit
  shows that for a tensor in $\overline{\OT_n(\cc)}$, the corresponding
  $n^2$ matrices must be symmetric and ASD.
  The same argument applies to $X_k X_l^T= UD_k D_l U^T$ and to the $y$ and $z$ slices.
\end{proof}
\begin{remark} \label{rem:otasd}
  This section deals with ordinary (possibly non symmetric) tensors,
  but still Proposition~\ref{otasd} shows that
  we need to use the ASD property for symmetric matrices only.
  Moreover we only need to consider approximations by simultaneously
  diagonalizable {\em symmetric} matrices since the matrices $VD_kD_l V^T$
  in the proof of Proposition~\ref{otasd} are symmetric.
 \end{remark}
By  Proposition~\ref{otasd} and Theorem~\ref{ASDimplies} we have the following analogue of Theorem~\ref{owbar}:
\begin{theorem} \label{otbar}
  The $x$-slices $X_1,\ldots,X_n$ of a tensor in $\overline{\OT_n(\cc)}$
  must satisfy the following properties:
  \begin{itemize}
 \item[(i)]  The matrices $X_k^TX_l$ are symmetric and pairwise commute.
 \item[(ii)] The subalgebra of $M_n(\cc)$ generated by these $n^2$ matrices
   and by the identity matrix is of dimension at most $n$.
 \item[(iii)] The centralizer of these matrices is of dimension at least $n$.
  \end{itemize}
 The same properties are satisfied by the matrices $X_k X_l^T$ and by the $y$ and $z$ slices.
  \end{theorem}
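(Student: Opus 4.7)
The proof proposal is very short because Theorem~\ref{otbar} is engineered to be an immediate corollary of the two results quoted just before its statement. The plan is simply to chain Proposition~\ref{otasd} with Theorem~\ref{ASDimplies}: given any $T \in \overline{\OT_n(\cc)}$ with $x$-slices $X_1,\ldots,X_n$, Proposition~\ref{otasd} tells us that the $n^2$ matrices $A_{(k,l)} := X_k^T X_l$ (for $1 \leq k,l \leq n$) are symmetric complex matrices and that this whole tuple is ASD. We then feed this tuple into Theorem~\ref{ASDimplies} with $k=n^2$ and read off the three conclusions.

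Concretely, the symmetry in (i) is already asserted by Proposition~\ref{otasd}; pairwise commutativity is exactly conclusion (i) of Theorem~\ref{ASDimplies} applied to our tuple, so (i) is obtained. Conclusion (ii) of Theorem~\ref{otbar} is conclusion (ii) of Theorem~\ref{ASDimplies} verbatim, once we observe that ``the subalgebra generated by $A_{(k,l)}$ and the identity'' is exactly the subalgebra of $M_n(\cc)$ spoken of in Theorem~\ref{otbar}.(ii). Similarly, conclusion (iii) is the dimension lower bound on the centralizer supplied by conclusion (iii) of Theorem~\ref{ASDimplies}; note that here the centralizer is unaffected by repetitions in the list of $A_{(k,l)}$, so there is no issue if some of the $n^2$ matrices happen to coincide.

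Finally, the ``same properties'' part is obtained by repeating the argument for the three remaining cases. Proposition~\ref{otasd} already records that the matrices $X_k X_l^T$ are symmetric and ASD, and that the analogous statements hold for the $y$-slices and $z$-slices; feeding each of these tuples into Theorem~\ref{ASDimplies} gives the conclusion in exactly the same way. There is no real obstacle to the proof: the content is packaged entirely into the two earlier results, and Theorem~\ref{otbar} is the ordinary-tensor analogue of Theorem~\ref{owbar}, obtained by the same two-line argument (just with the role of ``slices'' played by the products $X_k^T X_l$ and $X_k X_l^T$ instead of by the slices themselves).
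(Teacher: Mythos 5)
Your proposal is correct and matches the paper's own argument exactly: the paper introduces Theorem~\ref{otbar} with the single sentence ``By Proposition~\ref{otasd} and Theorem~\ref{ASDimplies} we have the following analogue of Theorem~\ref{owbar}'' and gives no further proof, which is precisely the two-result chaining you describe. Your remark that repetitions among the $n^2$ matrices $X_k^T X_l$ cause no harm is a sensible clarification, though the paper takes this for granted.
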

Finally, we show that the converse of this theorem does not hold.
We will need the following lemma, which follows from Definition~\ref{orthodef}
and the fact that the complex orthogonal group is of dimension $n(n-1)/2$.
\begin{lemma} \label{dimortho}
 $\dim \OT_n(\cc) \leq 3n(n-1)/2+n$.
\end{lemma}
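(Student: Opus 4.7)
The plan is to realize $\OT_n(\cc)$ explicitly as the image of a polynomial map whose source is a variety of known dimension, and then invoke the standard fact that the dimension of an image (as a constructible set) cannot exceed the dimension of the source.

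More concretely, Definition~\ref{orthodef} says that every element of $\OT_n(\cc)$ arises as $t(x,y,z) = g(Ax,By,Cz)$ for some triple $(A,B,C) \in O_n(\cc)^3$ of complex orthogonal matrices and some diagonal form $g$ as in~(\ref{diagtensor}), determined by the coefficient vector $(\alpha_1,\ldots,\alpha_n) \in \cc^n$. Thus $\OT_n(\cc)$ is the image of the polynomial map
\[
\Phi:\ O_n(\cc)^3 \times \cc^n \longrightarrow \cc^{n\times n\times n},\qquad
(A,B,C,\alpha_1,\ldots,\alpha_n) \longmapsto T,
\]
where the entries of $T$ are given by $T_{ijk}=\sum_{l=1}^n \alpha_l a_{li}b_{lj}c_{lk}$ (a polynomial expression in the $3n^2+n$ parameters).

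The hint in the statement already gives the key dimension count: the complex orthogonal group $O_n(\cc)$ is cut out inside $M_n(\cc)$ by the $n(n+1)/2$ independent quadratic equations $A^TA = \mathrm{Id}_n$ (a symmetric matrix identity), so it is a smooth variety of dimension $n^2 - n(n+1)/2 = n(n-1)/2$. Consequently the source of $\Phi$ is a variety of dimension
\[
3 \cdot \frac{n(n-1)}{2} + n.
\]

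Finally, I would invoke the standard algebro-geometric fact that if $\Phi:X\to Y$ is a morphism of varieties (or even a polynomial map between constructible sets) then $\dim \Phi(X) \leq \dim X$. Applying this to $\Phi$ yields $\dim \OT_n(\cc) \leq 3n(n-1)/2 + n$, which is the desired inequality. There is no real obstacle here; the only points to double-check are the dimension of $O_n(\cc)$ (immediate from the Jacobian of the defining equations $A^TA-\mathrm{Id}_n$, which has maximal rank at every orthogonal matrix) and the fact that $\OT_n(\cc)$ is constructible so that ``dimension'' makes sense — this was already noted in Section~\ref{constructible} via the characterization of Theorem~\ref{cortho}.
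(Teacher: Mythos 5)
Your proposal is correct and is exactly the argument the paper has in mind: the paper does not write out a separate proof, stating only that the lemma ``follows from Definition~\ref{orthodef} and the fact that the complex orthogonal group is of dimension $n(n-1)/2$,'' which is precisely your parametrization of $\OT_n(\cc)$ as the image of a morphism from $O_n(\cc)^3\times\cc^n$. Your write-up simply makes the dimension count and the ``image has dimension at most the source'' step explicit.
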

\begin{theorem} \label{outclosureot}
  For every large enough $n$ there is a tensor of order 3 and size $n$
  which satisfies all the properties of Theorem~\ref{otbar}
  but does not belong to $\overline{\OT_n(\cc)}$.
\end{theorem}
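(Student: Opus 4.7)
The approach is to mimic Theorem~\ref{outclosure} but to replace the appeal to border Waring rank by the more direct dimension bound of Lemma~\ref{dimortho}. The goal is to construct a tensor for which all the slice products $X_k^T X_\ell$, $X_k X_\ell^T$ (and their $y$- and $z$-analogues) of Theorem~\ref{otbar} vanish identically, so that conditions (i)--(iii) hold automatically, and then to use a dimension count to force the tensor out of $\overline{\OT_n(\cc)}$.

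First, I would fix a maximal totally isotropic subspace $V \subseteq \cc^n$, of dimension $\lfloor n/2 \rfloor$, and consider tensors of the form $T = \sum_{i=1}^r u_i \otimes v_i \otimes w_i$ with all $u_i, v_i, w_i \in V$; equivalently, arbitrary tensors in the linear subspace $V \otimes V \otimes V$ of $\cc^{n \times n \times n}$. A direct computation in the spirit of Proposition~\ref{zeroproduct} expresses the $z$-slice products as
$$Z_k^T Z_\ell = \sum_{i,j} w_{ik} w_{j\ell}\, v_i (u_i^T u_j) v_j^T, \qquad Z_k Z_\ell^T = \sum_{i,j} w_{ik} w_{j\ell}\, u_i (v_i^T v_j) u_j^T,$$
both of which vanish because total isotropy of $V$ kills every inner product $u_i^T u_j$ and $v_i^T v_j$. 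The same vanishing holds for the $x$- and $y$-slice products by symmetry of the construction. With all the relevant products identically zero, condition (i) of Theorem~\ref{otbar} is vacuous, the subalgebra in (ii) collapses to $\cc \cdot \mathrm{Id}$ (dimension $1 \leq n$), and the centralizer in (iii) is all of $M_n(\cc)$ (dimension $n^2 \geq n$). Hence every tensor in $V \otimes V \otimes V$ automatically satisfies the three necessary conditions.

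It then suffices to find an element of $V \otimes V \otimes V$ outside $\overline{\OT_n(\cc)}$. The space $V \otimes V \otimes V$ is a linear subspace of $\cc^{n \times n \times n}$ of dimension $\lfloor n/2 \rfloor^3$, while by Lemma~\ref{dimortho} the closure $\overline{\OT_n(\cc)}$ has the same dimension as $\OT_n(\cc)$, hence dimension at most $3n(n-1)/2 + n$. A routine comparison shows that $\lfloor n/2 \rfloor^3$ exceeds $3n(n-1)/2 + n$ for all sufficiently large $n$, so $V \otimes V \otimes V$ cannot be contained in $\overline{\OT_n(\cc)}$, and any element of $V \otimes V \otimes V$ lying outside the proper subvariety $V \otimes V \otimes V \cap \overline{\OT_n(\cc)}$ provides the desired counterexample.

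The main (mild) obstacle is the simultaneous vanishing of all the slice products in the $x$, $y$ and $z$ directions; once one notices that this is an immediate consequence of the total isotropy of $V$ applied to each of the three tensor factors in turn, the conditions of Theorem~\ref{otbar} become free of charge, and the rest of the argument reduces to a dimension comparison.
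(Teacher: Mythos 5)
Your proof is correct and follows the same core strategy as the paper: pick a maximal totally isotropic subspace $V$, observe that for tensors built from vectors in $V$ the relevant slice products all vanish (making the three conditions of Theorem~\ref{otbar} automatic), and then beat the dimension bound of Lemma~\ref{dimortho}. The one place you diverge is the choice of linear family: the paper, to stress the parallel with the symmetric case, restricts to \emph{symmetric} tensors $S=\sum_i u_i^{\otimes 3}$ with $u_i\in V$, a space of dimension $\binom{v+2}{3}$ where $v=\lfloor n/2\rfloor$, and invokes Proposition~\ref{zeroproduct} (together with $S_k^T=S_k$) to get $S_kS_\ell^T=S_k^TS_\ell=0$. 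You instead take the full space $V\otimes V\otimes V$ of dimension $v^3$, and re-derive the vanishing of each of the six families of slice products directly from total isotropy, one tensor factor at a time. Your family is strictly larger, so the dimension comparison against $3n(n-1)/2+n$ kicks in for smaller $n$ (roughly $n\geq 14$ in place of the paper's $n\geq 68$), and the argument is self-contained in the non-symmetric setting rather than borrowing a lemma from the symmetric section. Both routes are valid; yours is a mild but genuine simplification, at the cost of not exhibiting a symmetric witness.

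Two small points worth double-checking in a write-up. First, the statement ``$\dim\overline{\OT_n(\cc)}=\dim\OT_n(\cc)$'' deserves the same one-line justification as in Lemma~\ref{highborder}: taking closures of constructible sets does not increase dimension. Second, to conclude that $V\otimes V\otimes V\setminus\overline{\OT_n(\cc)}$ is nonempty you implicitly use that $V\otimes V\otimes V$ is irreducible (being a linear space), so that a proper closed subset has nonempty complement; this is fine, but it is worth saying.
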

\begin{proof}
  As a counterexample we will construct a symmetric tensor $S$,
  which we are of course free to view as an ordinary tensor.
  In fact, like in the proof of Theorem~\ref{outclosure}
  we will construct a tensor of the form
  \begin{equation} \label{counterot}
    S=\sum_{i=1}^r u_i^{\otimes 3}
   \end{equation}
  where the $u_i$ belong to a totally isotropic
  subspace $V \subseteq \cc^n$ with $\dim V = \lfloor n/2 \rfloor$.
  Let us fix such a $V$. Since $r$ is arbitrary, the set ${\cal S}$
  of tensors of form~(\ref{counterot}) is a linear space. Its  dimension
  is equal to ${v+2 \choose 3}$ where $v = \lfloor n/2 \rfloor$
  (this is the dimension of the space of homogeneous
  polynomials of degree 3 in $v$ 
  variables).

  The $x$, $y$ and $z$ slices of any $S \in \cal S$ are the same since $S$ is symmetric.
  Let us denote them by $S_1,\ldots,S_n$.
  We claim that the matrices $S_k S_l^T$ and $S_k^T S_l$
  considered in Theorem~\ref{otbar} are all equal to 0.
  They will therefore trivially satisfy properties (i), (ii) and (iii).
  The proof of the claim is simple: by symmetry of $S$ we have $S_k^T=S_k$
  and $S_l^T = S_l$, but by Proposition~\ref{zeroproduct}
  the product of any two
  slices of $S$ is equal to 0.
  Therefore it remains to find an $S \in \cal S$
  which is not in $\overline{\OT_n(\cc)}$.
  Such an $S$ is guaranteed to exist as soon as
  $\dim \overline{\OT_n(\cc)} = \dim \OT_n(\cc) < \dim {\cal S}$.
  We have shown that $\dim {\cal S}$ is of order $n^3/48$, but $\dim \OT_n(\cc)$
  is quadratically bounded by Lemma~\ref{dimortho}. Hence we will have
  $\dim \OT_n(\cc) < \dim {\cal S}$ for every large enough $n$
  (one can check that it suffices to take $n \geq 68$).
\end{proof}
In this paper, Theorem~\ref{outclosureot} is the only result for ordinary
tensors with a (slightly) simpler proof than its counterpart
for symmetric tensors
(Theorem~\ref{outclosure}).
This is due to the fact that we could design the counterexample $S$ in the
proof so that $S_k S_l^T = S_k^T S_l =0$.

\small

\section*{Acknowledgements}

Nicolas Ressayre made some useful comments on an early version of this paper.
I would also like to thank  Kevin O'Meara for the encouragements, 
and Roger Horn for sharing his proof of Theorem \ref{th:simdiag} and its generalizations.
The referee's comments led to a strengthening of Theorem~\ref{inclosure} 
and several improvements in the presentation of the paper.

\bibliographystyle{plain}

\end{document}